\documentclass[11pt]{amsart}

\usepackage[T1]{fontenc}
\usepackage{lmodern}
\usepackage{microtype}
\usepackage{amsmath,amssymb,amsfonts,amsthm,mathtools,mathrsfs}
\usepackage{enumitem}
\usepackage{aliascnt}
\usepackage{xcolor}
\usepackage[colorlinks=true,linkcolor=blue,citecolor=blue,urlcolor=blue]{hyperref}
\usepackage[capitalize,noabbrev]{cleveref}

\theoremstyle{plain}
\newtheorem{theorem}{Theorem}[section]

\newaliascnt{proposition}{theorem}
\newtheorem{proposition}[proposition]{Proposition}
\aliascntresetthe{proposition}

\newaliascnt{lemma}{theorem}
\newtheorem{lemma}[lemma]{Lemma}
\aliascntresetthe{lemma}

\newaliascnt{corollary}{theorem}
\newtheorem{corollary}[corollary]{Corollary}
\aliascntresetthe{corollary}

\newaliascnt{conjecture}{theorem}

\aliascntresetthe{conjecture}

\theoremstyle{definition}
\newaliascnt{definition}{theorem}
\newtheorem{definition}[definition]{Definition}
\aliascntresetthe{definition}

\newaliascnt{convention}{theorem}
\newtheorem{convention}[convention]{Convention}
\aliascntresetthe{convention}

\newaliascnt{example}{theorem}

\aliascntresetthe{example}

\theoremstyle{remark}
\newaliascnt{remark}{theorem}
\newtheorem{remark}[remark]{Remark}
\aliascntresetthe{remark}

\crefname{conjecture}{Conjecture}{Conjectures}
\Crefname{conjecture}{Conjecture}{Conjectures}
\crefname{proposition}{Proposition}{Propositions}
\Crefname{proposition}{Proposition}{Propositions}
\crefname{corollary}{Corollary}{Corollaries}
\Crefname{corollary}{Corollary}{Corollaries}
\crefname{lemma}{Lemma}{Lemmas}
\Crefname{lemma}{Lemma}{Lemmas}
\crefname{definition}{Definition}{Definitions}
\Crefname{definition}{Definition}{Definitions}
\crefname{convention}{Convention}{Conventions}
\Crefname{convention}{Convention}{Conventions}
\crefname{example}{Example}{Examples}
\Crefname{example}{Example}{Examples}
\crefname{remark}{Remark}{Remarks}
\Crefname{remark}{Remark}{Remarks}

\newcommand{\C}{\mathbb C}
\newcommand{\Pj}{\mathbb P}
\newcommand{\N}{\mathbb N}
\newcommand{\OO}{\mathcal O}
\newcommand{\W}{\mathcal W}
\newcommand{\F}{\mathcal F}

\newcommand{\Acal}{\mathcal A}
\newcommand{\Sym}{\operatorname{Sym}}
\newcommand{\Sol}{\operatorname{Sol}}
\newcommand{\rank}{\operatorname{rank}}

\newcommand{\dd}{\operatorname{d}}
\newcommand{\boxt}{\boxtimes}
\newcommand{\CNF}{N^*_{\F}}
\newcommand{\NF}{N_{\F}}
\newcommand{\jet}{\mathcal J}
\newcommand{\Der}{\operatorname{Der}}

\newcommand{\Hom}{\operatorname{Hom}}
\newcommand{\Ann}{\operatorname{Ann}}

\newcommand{\Bott}{\operatorname{Bott}}

\title[Abelian relations are Liouvillian]
{Abelian relations are Liouvillian}

\author{Gabriel Fazoli}
\address{IMECC - UNICAMP,
Rua S\'ergio Buarque de Holanda, 651,
13083-859 Campinas, SP, Brazil}
\email{gabrielfazoli@gmail.com}

\author{Jorge Vit\'orio Pereira}
\address{IMPA, Estrada Dona Castorina, 110, 22460-320 Rio de Janeiro, RJ, Brazil}
\email{jvp@impa.br}

\keywords{Web geometry, abelian relations, Liouvillian functions,
holomorphic foliations, differential modules}
\subjclass[2020]{Primary 53A60; Secondary 12H05,14C21}

\date{}

\begin{document}

\begin{abstract}
We prove that every germ of an abelian relation of a codimension-one web on a complex projective manifold is defined over a Liouvillian extension of the function field. 
\end{abstract}

\maketitle
\section{Introduction}

The systematic study of \emph{functional equations} goes back at least to
Cauchy's \emph{Cours d'analyse} of 1821, where he considered the problem of
determining a function from a prescribed property that it must satisfy (see
\cite[Chapter~5]{Cauchy1821}). A basic prototype is the equation
\[
   F(xy)=F(x)+F(y),
\]
whose  continuous solutions on $\mathbb{R}_{>0}$ are the multiples
of the logarithm. One step beyond the
logarithm sits the \emph{dilogarithm}
$\operatorname{Li}_2(z)=\sum_{n\geq1}z^n/n^2$, and with it one of the most
celebrated functional equations of analysis: Abel's five-term equation. In
terms of the Rogers dilogarithm
$R(t)=\operatorname{Li}_2(t)+\frac12\log(t)\log(1-t)$, it reads
\begin{equation}
\label{eq:intro-five-term}
 R(x)-R(y)-R\left(\frac{x}{y}\right)
 -R\left(\frac{1-y}{1-x}\right)
 +R\left(\frac{x(1-y)}{y(1-x)}\right)= - \frac{\pi^2}{6},
\end{equation}
for $0<x<y<1$. Far from being a curiosity, the five-term equation and its
higher analogues permeate mathematics, from algebraic $K$-theory to the
special values of $L$-functions; see \cite{Oesterle1993,Zagier2007}.

Equations of this kind admit a geometric interpretation. Given functions
$U_1,\ldots,U_k$ of $n$ variables,
one may ask for functions
$F_1,\ldots,F_k$ of one variable satisfying
\begin{equation}
\label{eq:intro-functional-equation}
   F_1(U_1(x_1,\ldots,x_n))+\cdots+F_k(U_k(x_1,\ldots,x_n))=0.
\end{equation} 
Each $U_i$ defines a \emph{foliation} $\F_i$ by its level sets, and their
superposition defines a \emph{web}
\[
   \W=\F_1\boxt\cdots\boxt\F_k.
\]
Differentiating \eqref{eq:intro-functional-equation} gives
\[
   \dd\bigl(F_1(U_1)\bigr)+\cdots+ \dd\bigl(F_k(U_k)\bigr)=0,
\]
a sum of closed one-forms, each defining the corresponding foliation. Such
a tuple of one-forms is called an \emph{abelian relation} of the web, and,
conversely, the Poincar\'e Lemma shows that, locally and up to constants,
every abelian relation arises from a functional equation of the form
\eqref{eq:intro-functional-equation}. Already in 1823, Abel introduced a
method based on successive differentiations and eliminations to study such
equations \cite{Abel1823}; see \cite{Pirio2004,Pirio2006Selecta} for its
relation with web geometry. In this language, the five-term equation
\eqref{eq:intro-five-term} is an abelian relation of \emph{Bol's web}
\[
 \mathcal B_5=
 \W\left(x,\ y,\ \frac{x}{y},\ \frac{1-y}{1-x},\
 \frac{x(1-y)}{y(1-x)}\right),
\]
the superposition of the four pencils of lines through the base points of
a pencil of conics with the pencil of conics itself.

Webs and their abelian relations were studied intensively by Blaschke's
school in the 1930s \cite{BlaschkeBol1938}. The space of abelian relations
of a $k$-web on a surface has dimension at most $(k-1)(k-2)/2$, and the
webs attaining this bound are tightly linked to algebraic geometry: by
Abel's theorem, the $k$-web of tangent lines dual to a plane curve of
degree $k$ has maximal rank, its abelian relations being avatars of the
abelian differentials of the curve. Chern and Griffiths revived the
subject in the 1970s, connecting webs of maximal rank in higher dimensions
with Abel's theorem and the algebraization problem
\cite{ChernGriffiths1978}; see Beauville's report \cite{Beauville1980}.
For codimension-one webs in general position on $\C^n$, $n\geq3$,
the algebraization of webs of maximal rank with at least $2n$
foliations was established by Tr\'epreau \cite{Trepreau2006};
we refer to the Bourbaki seminar \cite{PereiraBourbaki2008}
for an account of this result and its history.
On surfaces, however, algebraization fails: Bol's web $\mathcal B_5$ has
maximal rank $6$ and is not algebraizable, precisely because of the
dilogarithmic relation \eqref{eq:intro-five-term} \cite{Bol1936}. Webs of
maximal rank which are not algebraizable are called \emph{exceptional},
and, paraphrasing  \cite[Section~6.2.1]{PereiraPirio2015}, their abelian
relations remain mysterious: even for a planar web defined by rational
functions, it is not known, in general, what kind of transcendency its
abelian relations can exhibit.

This paper answers the qualitative form of that question for
codimension-one webs on projective varieties. Recall that a differential
field extension is \emph{Liouvillian} if it is obtained through a tower of
algebraic extensions, adjunctions of primitives, and adjunctions of
exponentials of primitives: Liouvillian functions are exactly those
reachable from rational functions by the classical operations of
elementary calculus. In \cite[Conjecture~5.1]{MarinPereiraPirio2006},
it is conjectured that the abelian relations of a
web are defined over a Liouvillian extension of its field of definition;
the question is raised again as \cite[Problem~1]{PereiraPirio2015}.
Apart from hexagonal $3$-webs, whose unique abelian relation is
classically Liouvillian, two cases were known. For algebraic webs, the
abelian relations are given by abelian integrals, by Abel's theorem, and
are therefore Liouvillian; see \cite[Chapter~3]{PereiraPirio2015}. For
webs admitting a transverse infinitesimal automorphism defined over the
field of definition, the conjecture was established in
\cite{MarinPereiraPirio2006}, see also
\cite[Proposition~6.2.1]{PereiraPirio2015}.  Our main result confirms the
conjecture for codimension-one webs on smooth projective varieties.

\begin{theorem}
\label{thm:main}
Let $\W$ be a codimension-one $k$-web on a smooth projective variety $X$ of dimension at least two. Every germ of an abelian relation of $\W$ at a general regular point is defined over a Liouvillian extension of $\C(X)$.
\end{theorem}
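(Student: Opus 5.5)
The plan is to realize the space of abelian relations as the solution space of a flat meromorphic connection on $X$, and then to show that this connection has a monodromy/differential Galois group with solvable identity component. By Picard--Vessiot theory, this forces the solutions to be Liouvillian over $\C(X)$.

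\emph{Step 1: the connection.} Write $\W=\F_1\boxt\cdots\boxt\F_k$ locally near a general point, with $\F_i$ defined by rational $1$-forms $\omega_i$. An abelian relation is a tuple $(\eta_1,\dots,\eta_k)$ with $\eta_i=g_i\omega_i$ closed and $\sum\eta_i=0$. Closedness of $g_i\omega_i$ is a first-order linear PDE system on $g_i$. Together with the relation $\sum g_i\omega_i=0$ and its successive derivatives (Abel's method), the system closes up after finitely many prolongations. This is the classical argument bounding the rank. The result is a vector bundle $E$ of finite rank, namely a suitable jet bundle of the tuple $(g_i)$. It carries a flat meromorphic connection $\nabla$, defined over $\C(X)$ (first over a finite cover where the $\F_i$ separate, which is an algebraic extension and therefore harmless), whose horizontal sections are exactly the germs of abelian relations. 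The Picard--Vessiot extension of $(E,\nabla)$ over $\C(X)$ then contains all coefficients of all abelian relations.

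\emph{Step 2: reduction to solvability.} The Picard--Vessiot extension is Liouvillian if and only if the identity component $G^0$ of the differential Galois group is solvable. It is therefore enough to produce a $\nabla$-stable flag of $E$ over $\C(X)$, after a finite extension, whose graded pieces have rank one. The natural filtration is by the foliation index combined with the jet order. Consider the map sending an abelian relation to its component $\eta_i$. For a fixed foliation $\F_i$, the closed forms $g\omega_i$ are the same as $dF(U_i)$ for $F$ a function of one variable along the leaf space. The contribution of $\F_i$ is thus governed by how $F$ varies transversally to $\F_i$, i.e.\ by a linear ODE in the single transverse variable. I would filter by the order of vanishing of $F'$ along a fixed leaf, which is equivalent to the order of contact in the jet bundle. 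The successive quotients are then line bundles on which $\nabla$ induces rank-one connections, given by Bott-type partial connections on $\NF_i$ and its symmetric powers.

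\emph{Step 3: the main obstacle.} The hard step is showing that this filtration is $\nabla$-stable globally, i.e.\ that differentiating in directions tangent to other foliations does not mix the graded pieces upward. Equivalently, the connection matrix must be block triangular in a rational frame. I would first try to exploit that $\nabla$ restricted to leaves of $\F_i$ acts on the $i$-th component through the Bott connection, which is triangular with respect to jet order. For transverse directions, one uses $\sum\eta_j=0$ to express derivatives of $\eta_i$ in terms of the other components of lower order.

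If this direct triangularization fails, the fallback is a Galois-theoretic argument. Restricting to a general line or curve $C\subset X$ through the point, the monodromy of $\nabla$ on the leaf space of each $\F_i$ is contained in the pseudogroup of maps $F\mapsto F\circ\phi + c$. Its linear part lies in the affine-type group preserving the jet flag, and that group is solvable. Projectivity of $X$ is used to guarantee that $\nabla$ is algebraic with regular-enough behavior, so that Zariski density arguments for $G$ apply. Either route reduces to this solvability statement, which I expect to be the crux of the proof.
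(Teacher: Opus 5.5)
\textbf{Verdict: there is a genuine gap.} Step~3 is the whole theorem, and you leave it unproved.

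Your Steps~1--2 are a legitimate frame. A rational flat connection whose horizontal sections contain the abelian relations does exist (H\'enaut's connection for planar webs, prolongation in general). A $\nabla$-stable flag with rank-one quotients over a finite extension of $\C(X)$ would finish the proof by quadratures and exponentials. The paper concludes this way too, though it works with the modules $E(u_i)$ generated by single components, not with a connection governing all components at once. But via Lie--Kolchin, the existence of such a flag is \emph{equivalent} to the Liouvillian property of the Picard--Vessiot extension. So Step~3 is the whole theorem, and neither route you propose establishes it.

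The contact-order filtration is not $\nabla$-stable. Already for the companion connection of one transverse equation $G^{(m)}+\sum_{\ell<m}A_\ell G^{(\ell)}=0$, the derivative of the $j$-th jet coordinate involves the $(j+1)$-st, and the last row involves all lower ones. So neither jet-order flag is preserved. A stable flag with rank-one quotients exists only when the operator factors into first-order operators, which is what has to be proved.

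Further problems with the plan:
\begin{itemize}
\item ``Order of vanishing along a fixed leaf'' is not a rational subobject at all.
\item The Bott connections on $(N^*_{\F_i})^{\otimes r}$ are only partial connections along the leaves. The transverse direction is exactly what must be controlled, and your plan supplies nothing for it.
\item The fallback fails. Continuing $u_i=F(U_i)$ along a loop gives $\tilde F\circ\phi$, where $\tilde F$ is the analytic continuation of $F$: an arbitrary solution of the same linear equation, not $F$ plus a constant. So the group in play is a linear group on the solution space with no built-in reason to be solvable.
\item Projectivity makes $\nabla$ algebraic but not regular singular, so monodromy need not even be Zariski dense in the Galois group.
\end{itemize}

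The missing input must be global and must involve several components. The fibration case shows why. If $\F$ is a rational fibration $p\colon X\dashrightarrow B$, pulled-back solutions of a linear ODE over $\C(B)$ with Galois group $\mathrm{SL}_2$ are first integrals of $\F$ satisfying a rational transverse equation. Nothing local to $\F$ excludes them.

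The paper uses three ideas with no counterpart in your plan.
\begin{itemize}
\item \emph{Transverse affine structure.} The minimal-order rational annihilator of $\Sol_{\F}(\W)$ is unique up to $\C(X)^*$. Its subprincipal coefficient $\frac1q A_{m-1}(u)\,\dd u$, with $q=\binom m2$, glues to a flat rational connection on $N^*_{\F}$ extending the Bott connection. Hence every foliation carrying a nonconstant component is transversely affine.
\item \emph{No rational first integral.} Work over a Liouvillian field containing a closed defining form of $\F_i$ but no nonconstant first integral. There the normalized operator has constant coefficients, so components are exponential polynomials in a Liouvillian first integral $t$. When no closed defining form exists over a finite extension, a dilation argument shows they are genuine polynomials. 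Either way, $E(u_i)$ acquires rank-one simple factors after a finite extension.
\item \emph{Fibrations: descent plus resonance.} When $\F_i$ is a fibration, $E(u_i)$ descends to the base curve. Resonance says $u_i=-\sum_{j\neq i}u_j$ forces every simple factor of $E(u_i)$ to occur in some $E(u_j)$ with $j\neq i$. If $\F_j$ is also a fibration, that factor descends through two distinct fibrations and becomes trivial after a finite extension. Otherwise the previous item already gives it rank one.
\end{itemize}
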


We stress that no maximal-rank or algebraizability assumption is imposed
on the web. The conjecture of \cite{MarinPereiraPirio2006} remains open
for germs of webs defined over arbitrary complex differential fields.

\subsection*{Structure of the paper}
\Cref{sec:liouvillian-background} collects the background on Liouvillian
extensions, in arbitrary dimension. \Cref{sec:webs-abelian-relations}
introduces webs and abelian relations on smooth projective varieties
and reduces the problem to
proving that each component of a functional abelian relation
$u_1+\cdots+u_k=0$ of a decomposed web $\W=\F_1\boxt\cdots\boxt\F_k$ is
Liouvillian. The starting point of the
proof, developed in
\Cref{sec:rational-transverse-equation,sec:transverse-affine-structure},
is that all the components attached to a foliation $\F_i$ satisfy a
rational differential equation in the transverse direction, which forces
$\F_i$ to be transversely affine. When $\F_i$ has no rational first
integral, this structure is rigid enough to turn every component into an
explicit Liouvillian expression
(\Cref{sec:no-rational-first-integrals}).  
When $\F_i$ is defined by a rational fibration, its components
generate differential modules which descend to the base curve.
\Cref{sec:component-modules,sec:resonance-descent} studies these
modules and the resonances imposed by the abelian relation. 
\Cref{sec:proof-main-theorem} combines these results with the
analysis of foliations without rational first integrals to obtain,
after a finite algebraic extension, triangular systems for the
components, which are then solved by quadratures and exponentials,
completing the proof of \Cref{thm:main}.

\subsection*{Acknowledgements} The authors express their appreciation for the financial support provided by CAPES/COFECUB and CNPq Projeto Universal 408687/2023-1 ``Geometria das Equa\c{c}\~oes Diferenciais Alg\'ebricas''. Fazoli acknowledges the support from CAPES (Grant number 88887.184306/2025-00), and FAPESP (Grant number 2026/14134-9). Pereira acknowledges the support from CNPq (Grant number 304690/2023-6), and FAPERJ (Grant number E26/200.550/2023).

\subsection*{Use of Large Language Models}
During the development and preparation of this article, the authors used ChatGPT (OpenAI; GPT-5.6 Sol) and Claude (Anthropic; Claude Fable 5), as interactive tools to explore proof strategies, and to assist with drafting and revision. The authors assume full responsibility for the contents of the article.
\section{Liouvillian first integrals and transversely affine foliations}
\label{sec:liouvillian-background}

In this section, we recall the basic theory of differential fields needed to define \emph{Liouvillian functions}. We do not aim to give a complete exposition, and for further details we refer to \cite{vanDerPutSinger2003,MR568864}.

\subsection{Liouvillian extensions}

\begin{definition}
\label{def:differential-field}
A \emph{differential field over $\C$} is a field $K\supset\C$ endowed with
a finite family
\[
   \Delta=\{\partial_1,\ldots,\partial_n\}
\]
of $K$-linearly independent, pairwise commuting $\C$-derivations. Its \emph{field of constants} is
\[
    K^\Delta=\left\{f\in K \mid \partial_i(f) = 0 \text{ for every } i \right\} = \bigcap_i\ker\partial_i \, .
\]
We write
\[
   \Omega^1_\Delta(K)=\Hom_K\left(\sum_iK\partial_i,K\right)
\]
and denote by $\dd_\Delta:K\to\Omega^1_\Delta(K)$ the differential $\dd_\Delta f=\sum_i\partial_i(f)\varepsilon_i$, where $\varepsilon_i(\partial_j)=\delta_{ij}$.
\end{definition}

\begin{definition}\label{def:differential-extension} 
Let $(K,\Delta)$ be a differential field. A \emph{differential extension} $L/K$ is a field extension equipped with commuting derivations extending those of $\Delta$. By abuse of notation, we also denote the extended derivations on $L$ by $\Delta$. Observe that this extension induces a canonical isomorphism
\[
    \Omega^1_\Delta(L) \simeq L\otimes_K\Omega^1_\Delta(K),
\]
under which we identify $\Omega^1_\Delta(K)$ with a subspace of $\Omega^1_\Delta(L)$.
\end{definition}

\begin{definition}
\label{def:liouvillian-extension}
Let $(K,\Delta)$ be a differential field with $K^\Delta=\C$. A differential extension $L/K$ is
\emph{Liouvillian} if $L^\Delta=\C$ and there is a tower of differential fields
\[
   K=K_0\subset K_1\subset\cdots\subset K_r=L,
   \qquad K_i=K_{i-1}(a_i),
\]
in which, for every $i$, either $a_i$ is algebraic over $K_{i-1}$, or
\[
   \dd_\Delta a_i\in\Omega^1_\Delta(K_{i-1}),
   \qquad\text{or}\qquad
   \frac{\dd_\Delta a_i}{a_i}\in\Omega^1_\Delta(K_{i-1}).
\]
The last two steps are called, respectively, adjunctions of a primitive and
of an exponential of a primitive.
\end{definition}

\begin{lemma}
\label{lem:liouvillian-correspondence}
Let $(K,\Delta)$ be a differential field with $K^{\Delta} = \C$, and let $K'/K$ be a finite algebraic extension. 
\begin{enumerate}[label=\textup{(\roman*)}]
\item If $L/K'$ is Liouvillian then $L/K$ is Liouvillian.
\item If $L/K$ is Liouvillian, then $LK'/K'$ is Liouvillian in any common differential overfield of $L$ and $K'$
\end{enumerate}
\end{lemma}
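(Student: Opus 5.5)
Both parts are handled by manipulating towers as in \cref{def:liouvillian-extension}. Throughout, $K'/K$ is a finite algebraic differential extension, so its derivations extend uniquely. Since $K'$ is algebraic over $K$ and $K^\Delta=\C$ is algebraically closed, we have $K'^\Delta=\C$: a constant of $K'$ is algebraic over $K$, its minimal polynomial has constant coefficients (differentiate and use minimality), hence lies in $\C[t]$ and so splits over $\C$.

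For (i), let $K'=K_0'\subset K_1'\subset\cdots\subset K_r'=L$ be a Liouvillian tower over $K'$ with $L^\Delta=\C$. Since $K'/K$ is finite algebraic, I would write $K'=K(b_1,\dots,b_s)$, for instance with a primitive element so that $s=1$. Prepending the algebraic steps $K\subset K(b_1)\subset\cdots\subset K'$ gives a tower from $K$ to $L$. Each prepended step is an algebraic adjunction, each differential subfield in the tower inherits the derivations, and the remaining steps are unchanged. So $L/K$ is Liouvillian; the constants condition $L^\Delta=\C$ is part of the hypothesis.

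For (ii), let $M$ be a common differential overfield containing $L$ and $K'$, and let $K=K_0\subset\cdots\subset K_r=L$ with $K_i=K_{i-1}(a_i)$. I would form the compositum tower $K'=K_0K'\subset K_1K'\subset\cdots\subset K_rK'=LK'$ inside $M$. Each $K_iK'$ is a differential subfield, since the compositum of differential subfields is closed under the derivations. We have $K_iK'=(K_{i-1}K')(a_i)$. Moreover, if $a_i$ is algebraic over $K_{i-1}$, or $\dd_\Delta a_i$ or $\dd_\Delta a_i/a_i$ lies in $\Omega^1_\Delta(K_{i-1})$, then the same holds over $K_{i-1}K'$, because $\Omega^1_\Delta(K_{i-1})\subset\Omega^1_\Delta(K_{i-1}K')$ under the identification of \cref{def:differential-extension}. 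So the tower has the required shape.

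The remaining point, which I expect to be the only real obstacle, is to show $(LK')^\Delta=\C$. Since $K'=K(b)$ is finite algebraic over $K$, the field $LK'=L(b)$ is finite algebraic over $L$. Applying the argument of the first paragraph with $L$ in place of $K$, using $L^\Delta=\C$ algebraically closed, any constant of $L(b)$ is algebraic over $L$ with minimal polynomial in $\C[t]$, hence lies in $\C$. The key step there is this: if $c$ is a constant with minimal polynomial $P=t^m+\sum p_jt^j$ over $L$, then applying $\partial$ to $P(c)=0$ gives $\sum\partial(p_j)c^j=0$. This is a polynomial relation of degree $<m$, so every $\partial p_j=0$, hence $p_j\in L^\Delta=\C$. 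This completes (ii).
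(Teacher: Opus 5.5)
Your proof is correct and follows essentially the same route as the paper. You prepend the primitive-element algebraic step for (i), take composita of the tower with $K'$ for (ii), and get $(LK')^\Delta=\C$ by rerunning the minimal-polynomial argument on the algebraic extension $LK'/L$; you only spell out the minimal-polynomial computation and the derivation-stability of composita more explicitly than the paper does.
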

\begin{proof} 
We first remark that, since $K'/K$ is finite algebraic, the derivations in
$\Delta$ extend uniquely to commuting derivations on $K'$, making $K'/K$ a
differential field extension. Moreover, $(K')^{\Delta}=\C$. Indeed,
differentiating the minimal polynomial of $c\in(K')^{\Delta}$ shows that all
its coefficients are constant, and thus $c\in\C$.

For \textup{(i)}, the primitive element theorem gives $K'=K(a)$ with $a$
algebraic over $K$; adding this algebraic step at the beginning of a
Liouvillian tower for $L/K'$ yields a Liouvillian tower for $L/K$.

For \textup{(ii)}, take the compositum of every field in a tower for $L/K$
with $K'$. Each step remains an adjunction of the same type, since
$\Omega^1_\Delta(K_{i-1})\subset\Omega^1_\Delta(K_{i-1}K')$. Finally,
$(LK')^{\Delta}=\C$: the extension $LK'/L$ is algebraic and $L^{\Delta}=\C$,
so the argument of the first paragraph applies with $L$ in place of $K$.
\end{proof}

\subsection{Function fields}
We now specialize the definitions above to function fields. Let $X$ be a smooth variety of dimension $n$ and set $K=\C(X)$. Any transcendence basis $x_1,\ldots,x_n$ determines commuting derivations $\partial/\partial x_i$ on $K$, with common field of constants $\C$, and identifies
\[
   \Omega^1_{K/\C}=\Omega^1_\Delta(K)
\]
with the $K$-space of rational one-forms on $X$. Under this identification, $\dd_{\Delta}$ becomes the usual exterior derivative of rational functions, and hence we omit $\Delta$ from the notation, as well as in any differential field extension of $K$. We note that different choices of transcendence basis yield equivalent notions of differential extension and Liouvillianity. 

\begin{convention}
\label{conv:differential-function-field}
All differential fields used to define extensions of $K=\C(X)$ are embedded, compatibly with the chosen derivations, in the differential field $\mathcal M_{X,x}$ of germs of meromorphic functions at a general point. 
\end{convention}

\begin{remark}\label{rmk:constants-automatic}
    Two consequences of \Cref{conv:differential-function-field} will be used without further comment. First, any subfield of $\mathcal M_{X,x}$ containing $\C$ and stable under the derivations has field of constants $\C$, since a germ killed by all derivations is constant. Hence the condition $L^\Delta=\C$ in \Cref{def:liouvillian-extension} is automatic for every extension considered in this paper. Second, the compositum of two Liouvillian
extensions of $K$ inside $\mathcal M_{X,x}$ is again Liouvillian:
concatenate a tower for the first with the compositum of a tower for the second, as in the proof of \Cref{lem:liouvillian-correspondence}.
\end{remark}

\begin{definition}\label{def:defined-over}
    Let $L/K$ be a differential field extension. A (germ of) function is \emph{defined over} $L$ if it belongs to $L\subset \mathcal M_{X,x}$. Similarly, a (germ of) one-form is defined over $L$ if it belongs to $L\otimes_K\Omega^1_{K/\C}$. We say the respective function or one-form is \emph{Liouvillian} if it is defined over a Liouvillian extension of $K$.
\end{definition}

\begin{remark}\label{rmk:liouvillian-function-form}
    A function $f$ is Liouvillian if and only if $\dd f$ is Liouvillian.
\end{remark}

\subsection{Liouvillian first integrals}
We refer to \cite{CousinPereira2014} for the basic notions and terminology concerning foliations used throughout this text.

\begin{definition}
\label{def:global-liouvillian-fi}
Let $\F$ be a codimension-one foliation on a smooth projective variety $X$.
A \emph{global Liouvillian first integral} is a nonconstant $a$ in a
Liouvillian extension of $\C(X)$ such that every rational vector field
tangent to $\F$ annihilates $a$.
\end{definition}

Equivalently, if $\omega$ is a rational one-form defining $\F$, then
$\dd a\wedge\omega=0$.

\begin{definition}
\label{def:ta}
A \emph{singular transverse affine structure} on a codimension-one
foliation $\F$ is a flat meromorphic connection on $N_\F$ whose restriction
to $T_\F$ is the Bott partial connection. Equivalently, for a rational
one-form $\omega$ defining $\F$, a transverse affine structure is represented by a rational one-form $\eta$ satisfying
\begin{equation}
\label{eq:transverse-affine-structure}
   \dd\omega=\omega\wedge\eta,
   \qquad
   \dd\eta=0.
\end{equation}
Under a rational rescaling $\omega\mapsto g\omega$, the corresponding $\eta$ changes to
$\eta-\dd g/g$.  A foliation carrying such a structure is
\emph{transversely affine}. Similarly, $\F$ is called \emph{transversely additive} if it can be defined by a closed rational one-form. Equivalently, it admits a singular transverse affine structure with $\eta =0$.
\end{definition}

See \cite[Section~2.2]{CousinPereira2014} for the equivalence between the
connection and one-form descriptions of transverse affine structures.

\begin{theorem}[Singer's criterion]
\label{thm:singer-criterion}
A codimension-one foliation on a smooth complex projective variety admits a
global Liouvillian first integral if and only if it is transversely affine.
\end{theorem}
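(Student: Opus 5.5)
Both implications go through the one-form description \eqref{eq:transverse-affine-structure}. Fix a rational one-form $\omega$ defining $\F$. Write $L$ for a Liouvillian extension of $K=\C(X)$, embedded in $\mathcal M_{X,x}$ as in \Cref{conv:differential-function-field}.

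\textbf{Transversely affine $\Rightarrow$ Liouvillian first integral.} Let $\eta$ be a closed rational one-form with $\dd\omega=\omega\wedge\eta$.
\begin{enumerate}[label=\textup{(\arabic*)}]
\item Since $\dd\eta=0$, the germ $h=\int\eta$ satisfies $\dd h=\eta\in\Omega^1_{K/\C}$. So $K_1=K(h)$ is obtained by adjoining a primitive.
\item Let $g=\exp(-h)$. Then $\dd g/g=-\eta\in\Omega^1(K_1)$, so $K_2=K_1(g)$ is obtained by adjoining an exponential of a primitive.
\item Compute $\dd(g\omega)=g(-\eta\wedge\omega+\dd\omega)=g(\omega\wedge\eta+\omega\wedge\eta)$ and compare with the required sign. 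If $\eta$ is replaced by $-\eta$ in the choice of $g$, one gets $\dd(g\omega)=g(\eta\wedge\omega+\omega\wedge\eta)=0$. So with $g=\exp(\int\eta)$ the form $g\omega$ is closed and has coefficients in $K_2$.
\item Set $a=\int g\omega$, adjoined as a primitive over $K_2$. Then $a$ is nonconstant and $\dd a\wedge\omega=0$. Constants stay $\C$ by \Cref{rmk:constants-automatic}.
\end{enumerate}

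\textbf{Liouvillian first integral $\Rightarrow$ transversely affine.} Take a Liouvillian first integral $a\in L$ and a tower $K=K_0\subset\cdots\subset K_r=L$. Then $\dd a=f\omega$ with $f\in L$. The aim is to find a tower level $K_j$ and a one-form $\omega_j=f_j\omega$ with $f_j\in K_j$ and $\dd\omega_j=\omega_j\wedge\eta_j$, where $\eta_j\in\Omega^1(K_j)$ is closed. At the top level this holds with $\omega_r=\dd a$ and $\eta_r=0$. I will descend this property one step at a time, following Singer's argument.

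\begin{enumerate}[label=\textup{(\arabic*)}]
\item \emph{Algebraic step.} Average over conjugates. Apply the trace to $\eta$ after normalizing $\omega_j=\omega$ times an element of $K_{j-1}$. Concretely, write $\omega_j=f_j\omega$. The relation $\dd\omega=\omega\wedge(\eta_j-\dd f_j/f_j)$ holds with $\eta_j-\dd f_j/f_j$ determined modulo $\omega$. Average this class with the trace divided by the degree; the term $\dd f_j/f_j$ averages to $\dd(N f_j)/(m\,N f_j)$, where $N$ is the norm and $m$ the degree. This yields a closed form over $K_{j-1}$.
\item \emph{Transcendental step.} Here $t=a_j$ is a primitive or an exponential over $K_{j-1}$. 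Expand $f_j$ and $\eta_j$ as rational functions of $t$. Use $\dd\eta_j=0$ and the Bott relation to compare leading coefficients in $t$, or residues at a pole in $t$. This produces data over $K_{j-1}$ of the same shape, possibly with $\eta$ shifted by a $K_{j-1}$-multiple of $\dd t$ or $\dd t/t$, which lies in $\Omega^1(K_{j-1})$.
\end{enumerate}

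At $j=0$ the result is a rational closed $\eta$ with $\dd\omega=\omega\wedge\eta$, which is a transverse affine structure. One point needs care: $\eta$ is only determined modulo $\omega$. The closedness $\dd\eta=0$ must be arranged, not just $\dd\eta\wedge\omega=0$. For that I would use $\dd\eta\wedge\omega=0$ together with $\omega\wedge\dd\omega=0$ to adjust $\eta$ by a multiple of $\omega$.

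\textbf{Main obstacle.} The hard part is the transcendental descent step, especially controlling the closedness of $\eta$ under leading-term extraction. I would try leading coefficients in $t$ first. If that fails, I would fall back on citing Singer's theorem directly in the form given by Cousin--Pereira.
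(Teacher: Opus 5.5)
Your direction ``transversely affine $\Rightarrow$ Liouvillian first integral'' is the paper's argument: if $\dd E/E=\eta$ then $\dd(E\omega)=E(\eta\wedge\omega+\omega\wedge\eta)=0$, and a primitive of $E\omega$ is the first integral. Clean up step (3). The first choice $g=\exp(-h)$ gives $\dd(g\omega)=2g\,\omega\wedge\eta\neq0$, and the auxiliary primitive $h$ is unnecessary, since $\eta\in\Omega^1(K)$ lets you adjoin $E$ directly as an exponential of a primitive. This is the only implication the paper proves, and the only one it uses. For the converse the paper just cites Singer's theorem and its higher-dimensional versions (Sc\'ardua, Cousin--Pereira), so your fallback is exactly the paper's treatment. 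Your descent sketch, however, does not yet stand on its own.

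The concrete gap is closedness. The remedy in your ``one point needs care'' paragraph cannot work. For \emph{any} $\eta$ with $\dd\omega=\omega\wedge\eta$ one has $0=\dd^2\omega=-\omega\wedge\dd\eta$, and such an $\eta$ exists rationally for every integrable $\omega$. So $\dd\eta\wedge\omega=0$ and $\omega\wedge\dd\omega=0$ carry no information. If they let you correct $\eta$ into a closed form by adding a multiple of $\omega$, every foliation would be transversely affine, and by the easy direction would have a Liouvillian first integral, which is false. Closedness must be \emph{produced} at each descent step, and the transcendental step, which you leave unexecuted, is exactly where this happens. For the same reason, a shift by a nonconstant $K_{j-1}$-multiple of $\dd t/t$ is not allowed; only constant multiples preserve closedness.

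Here is one way to close the gap. Carry only the closed form $\theta$ with $\dd\omega=\omega\wedge\theta$; from $\dd(f_j\omega)=f_j\omega\wedge\eta_j$ one gets $\theta=\eta_j+\dd f_j/f_j$ (your sign is off). The algebraic step is then just the normalized trace of $\theta$, which commutes with $\dd$.

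Now let $k=K_{j-1}$, let $t=a_j$ be transcendental over $k$, and expand in partial fractions in $t$ over $\overline{k}$.

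\emph{Exponential case}, $\dd t/t\in\Omega^1(k)$. Taking the $t^0$-coefficient commutes with $\dd$, since $\dd(ct^\ell)=(\dd c+\ell c\,\dd t/t)\,t^\ell$ and polar parts at $e\neq0$ stay polar at $e$. Hence the $t^0$-coefficient of $\theta$ is a closed form in $\Omega^1(k)$ satisfying the same equation.

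\emph{Primitive case}, $\dd t\in\Omega^1(k)$. Here that projection fails to commute with $\dd$. Instead, since $\omega\wedge\theta=\dd\omega$ is free of $t$, write $\theta=\gamma_0+g\,\omega$ with $\gamma_0\in\Omega^1(k)$ and $g\in k(t)$. Closedness becomes
\[
   \omega\wedge(\dd g-g\gamma_0)=\dd\gamma_0,
   \qquad\text{where}\qquad
   \omega\wedge(\dd h-h\gamma_0)=0\iff\dd(h\omega)=0 .
\]
Then distinguish three cases.
\begin{enumerate}[label=\textup{(\alph*)}]
\item If the polynomial part of $g$ has degree $d\geq1$, the $t^d$-coefficient of this identity gives $\omega\wedge(\dd g_d-g_d\gamma_0)=0$, i.e.\ $\dd(g_d\omega)=0$, and $\dd g_d/g_d$ works. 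This is the leading-coefficient idea you had in mind.
\item If $g$ has a pole at $e\in\overline{k}$, the most polar coefficient gives $\omega\wedge\dd(t-e)=0$. Either $\dd t=\dd e$, which after a trace puts $t$ in $k$, a contradiction; or $\dd(t-e)=c\,\omega$ with $c\in k(e)^*$, and $\dd c/c$ works after a trace.
\item Otherwise $g\in k$, and $\theta$ is already defined over $k$.
\end{enumerate}
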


For rational surfaces this is Singer's theorem \cite[Theorem~1]{Singer1992}; the
general case follows similarly and is spelled out in \cite[Theorem~5.4]{Scardua2011}, see also \cite[Section 2]{CousinPereira2014}. The easy
direction is immediate from \eqref{eq:transverse-affine-structure}: if $\dd E/E=\eta$, then $E\omega$ is closed, and a primitive of $E\omega$ is a Liouvillian first integral. In this paper, only the elementary implication, transversely affine implies Liouvillian first integral, is used.
\section{Webs and abelian relations}
\label{sec:webs-abelian-relations}

Throughout the paper, $X$ denotes a smooth projective variety of dimension
$n\geq2$, and all foliations and webs have codimension one.
We follow the conventions of \cite{PereiraPirio2015}, except that the
web germs called smooth here are only required to be pairwise transverse;
they are called quasi-smooth in that reference.

\subsection{Webs}

\begin{definition}
\label{def:germ-web}
A \emph{germ of a smooth $k$-web} at $x\in X$ is a
superposition
\[
   \W_x=\F_{1,x}\boxt\cdots\boxt\F_{k,x}
\]
of germs of foliations with pairwise distinct tangent
hyperplanes at $x$. If $\omega_i$ defines $\F_{i,x}$, then
$(\omega_i\wedge\omega_j)(x)\neq0$ for $i\neq j$, and
$\W_x$ is represented by the symmetric product
$\omega_1\cdots\omega_k\in\Sym^k\Omega^1_{X,x}$.
\end{definition}

The superposition $\W_x\boxt\W'_x$ of two smooth web germs
is defined in the same way, provided all their tangent
hyperplanes at $x$ are distinct.

\begin{definition}
\label{def:global-web}
A $k$-web on $X$ is a saturated line subsheaf
\[
   N^*_{\W}\subset\Sym^k\Omega_X^1
\]
whose local generator near a general point factors as
\[
   \omega_1\cdots\omega_k,
   \qquad \omega_i\wedge\dd\omega_i=0,
   \qquad \omega_i\wedge\omega_j\neq0\quad(i\neq j).
\]
Its discriminant $\Delta(\W)$ is the reduced locus where
it is not a smooth web. We set
$U_{\W}=X\setminus\Delta(\W)$.
On $U_{\W}$, the web splits locally into smooth foliations,
uniquely up to permutation.
\end{definition}

A foliation $\F$ is \emph{transversal} to $\W$ if its
generic conormal direction is not a factor of $\W$.
Their product then defines a $(k+1)$-web $\W\boxt\F$.

The local factors of a web need not define foliations on
$X$, but they do so after passing to a suitable cover
\cite[Sections~2.2 and~3.1]{FavrePereira2015}.

\begin{definition}
\label{dfn:pullback-web}
For a generically finite morphism $\pi:X'\to X$ of smooth
projective varieties, the pullback $\pi^*\W$ is obtained by
pulling back $N^*_{\W}\to\Sym^k\Omega_X^1$ and saturating
its image in $\Sym^k\Omega_{X'}^1$.
\end{definition}

\begin{proposition}[Decomposing cover]
\label{prop:decomposable-pullback}
Let $\W$ be a $k$-web on $X$. There are a finite Galois
extension $K'/\C(X)$, a smooth projective variety $X'$ with
$\C(X')=K'$, and a generically finite morphism $\pi:X'\to X$
such that
\[
   \pi^*\W=\F'_1\boxt\cdots\boxt\F'_k.
\]
The Galois group permutes the factors, and $\pi$ is finite
\'etale and Galois over a dense open subset of $U_{\W}$.
\end{proposition}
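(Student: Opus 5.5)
The plan is to build $K'$ as the splitting field of the polynomial whose roots are the local factors of $\W$, and to take $X'$ to be a resolution of the normalization of $X$ in $K'$.

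\textbf{Algebraic setup.} Fix a rational one-form $\omega_0$ on $X$. Choose affine coordinates and rational functions $x_1,\ldots,x_n$ forming a transcendence basis, with $\dd x_1,\ldots,\dd x_n$ spanning $\Omega^1_{K/\C}$, $K=\C(X)$. Near a general point, write the local generator of $N^*_\W$ as a polynomial in the $\dd x_i$. After a general linear change of coordinates, each local factor $\omega_j$ has nonzero $\dd x_n$-coefficient, so we may normalize
\[
\omega_j=\dd x_n-\textstyle\sum_{i<n} p_{ij}\,\dd x_i .
\]
Dehomogenizing with respect to $\dd x_n$ turns the generator into a product of linear forms in the remaining variables. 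Its coefficients, divided by the leading one, are rational functions on $X$.

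Specializing the remaining variables to $\dd x_i\mapsto t^{\,i}$, or simply to a general line, gives a one-variable polynomial $P(t)\in K[t]$ of degree $k$. Its roots are the germs $\lambda_j=\sum_{i<n}p_{ij}t_i$ at the chosen values. These roots are pairwise distinct because the tangent hyperplanes are distinct at points of $U_\W$. Hence $P$ is separable.

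Let $K'$ be the splitting field of $P$, which is Galois over $K$. The Galois group acts on the roots, hence on the factors. We then need every coefficient $p_{ij}$ to lie in $K'$, not just the specialized combinations. For this, take $n-1$ general specializations: each $p_{ij}$ is then recovered by linear algebra from roots paired via the coefficient matching. A cleaner route is to adjoin the roots of all the polynomials obtained by restricting to the coordinate lines, together with the Galois closure. In either case the $p_{ij}$ are algebraic over $K$ and the field they generate is finite over $K$; we take $K'$ to be the Galois closure of $K(p_{ij})$.

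\textbf{Geometric realization.} Let $X'$ be a resolution of singularities of the normalization of $X$ in $K'$, and let $\pi:X'\to X$ be the induced generically finite map. Over $U_\W$, away from the branch locus, the factors $\omega_j$ become single-valued rational one-forms on $X'$. Each satisfies the integrability condition $\omega_j\wedge\dd\omega_j=0$, which holds analytically and hence as an identity of rational forms. Each therefore defines a foliation $\F'_j$.

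The pullback of the generator factors as $\prod_j\pi^*\omega_j$ up to a unit. Saturation does not change the factorization generically, so $\pi^*\W=\F'_1\boxt\cdots\boxt\F'_k$. The Galois group acts by deck transformations and permutes the $\pi^*\omega_j$, and hence permutes the factors.

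\textbf{Étale locus.} This is the step I expect to need the most care. Over $U_\W$ the local factors vary holomorphically and stay distinct. The local branches of the $p_{ij}$ are therefore holomorphic there, so the normalization is unramified over $U_\W$ minus the polar loci of the coefficients. Restricting to the dense open set where the normalization is smooth and resolution is an isomorphism, $\pi$ is finite étale and Galois.
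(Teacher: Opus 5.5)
Your proposal is correct and follows essentially the same route as the paper: you adjoin the coefficients of the linear factors of the generic symmetric form to get a finite Galois extension $K'$, use integrability to obtain foliations over $K'$ that the Galois group permutes, normalize $X$ in $K'$ and resolve, and then restrict to the complement of the branch locus and of the image of the exceptional locus. Two small points: the paper takes an equivariant resolution, so the Galois group acts biregularly on $X'$ rather than only birationally, and being \'etale over a dense open set follows directly from generic smoothness of the finite separable map $X'\to X$, so the step you flagged as needing the most care requires no local analysis of the $p_{ij}$.
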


\begin{proof}
At the generic point, $\W$ is represented by a symmetric form
which is geometrically a product of distinct linear factors.
Choose a finite Galois extension $K'/\C(X)$ over which all these
factors are defined. By integrability, they define foliations over $K'$,
permuted by the Galois group.
Normalize $X$ in $K'$ and take an
equivariant resolution $X'$. The resulting morphism
$\pi:X'\to X$ has the required decomposition. Away from
the branch locus and the image of the exceptional locus,
it is finite \'etale and Galois.
\end{proof}

\begin{convention}
\label{conv:decomposed-web}
Whenever an ordered list of factors is needed, we pass
to a decomposing cover, choose an ordering, and suppress
the primes. Thus $\W=\F_1\boxt\cdots\boxt\F_k$ may denote
either a local splitting or the rational splitting on
that cover.
\end{convention}

\subsection{Abelian relations}

\begin{definition}
\label{def:abelian-relations}
Let $x\in U_{\W}$ and choose a local ordering
$\W=\F_1\boxt\cdots\boxt\F_k$.
An \emph{abelian relation} at $x$ is a tuple
$(\eta_1,\ldots,\eta_k)$ of germs of closed one-forms
such that $\eta_i\in N_{\F_i}^*$ and
\[
   \sum_{i=1}^k\eta_i=0.
\]
The vector spaces $A(\W)_x$ of abelian relations form a
local system $\Acal_{\W}$ on $U_{\W}$, of rank at most
$(k-1)(k-2)/2$ \cite[Chapter~2]{PereiraPirio2015}.
We denote its rank by $\rank(\W)$. 

Let $\OO_{X/\F_i}$ be the ring of germs of holomorphic
first integrals of $\F_i$. A \emph{functional abelian
relation} at $x$ is a tuple $(u_1,\ldots,u_k)$ with $u_i\in\OO_{X/\F_i,x}$ and 
\[
    \sum_{i=1}^k u_i=0.
\]
These relations form a vector space $\Sol(\W)_x$,
also the stalk of a local system on $U_{\W}$.
\end{definition}

The Poincar\'e lemma gives the exact sequence
\begin{equation}
\label{eq:functional-abelian-exact}
0\longrightarrow
\left\{(c_i)\in\C^k\mathrel{\Big|}\sum_i c_i=0\right\}
\longrightarrow\Sol(\W)_x
\xrightarrow{\,\dd\,}A(\W)_x
\longrightarrow0.
\end{equation}

\begin{definition}
\label{def:component-space}
Let $\F$ be a foliation transversal to $\W$ and let $x$
be a regular point of $\W\boxt\F$. We denote by
$\Sol_{\F}(\W)_x$ the image of the projection
\[
   \Sol(\W\boxt\F)_x\longrightarrow\OO_{X/\F,x}
\]
onto the $\F$-component, and define $A_{\F}(\W)_x$
similarly. Differentiation gives an exact sequence
\begin{equation}
\label{eq:component-exact}
   0\longrightarrow\C\longrightarrow\Sol_{\F}(\W)_x
   \xrightarrow{\,\dd\,}A_{\F}(\W)_x
   \longrightarrow0.
\end{equation}
\end{definition}

\begin{definition}
\label{def:liouvillian-abelian-relation}
An abelian relation is \emph{defined over} a differential
extension $L/\C(X)$ if all its components are defined over
$L$ in the sense of \Cref{def:defined-over}.
It is \emph{Liouvillian} if $L$ can be chosen Liouvillian.
The same terminology applies to functional abelian relations.
\end{definition}

By \Cref{rmk:liouvillian-function-form}, an abelian relation
is Liouvillian if and only if any of its functional lifts
in \eqref{eq:functional-abelian-exact} is Liouvillian.

\begin{remark}
\label{rmk:liouvillian-descent-covers}
Let $\pi:X'\to X$ be a generically finite morphism of smooth
projective varieties, and set $K=\C(X)$ and $K'=\C(X')$.
Choose $x\in U_{\W}$ and $x'\in\pi^{-1}(x)$ where $\pi$
is \'etale, and identify the meromorphic germs at $x$
and $x'$ through $\pi$. Then an abelian relation $\alpha$
at $x$ is Liouvillian if and only if $\pi^*\alpha$ is
Liouvillian. Indeed, if $\alpha$ is defined over a
Liouvillian extension $L/K$, then $\pi^*\alpha$ is defined
over $LK'/K'$, which is Liouvillian by
\Cref{lem:liouvillian-correspondence}\textup{(ii)}.
Conversely, a Liouvillian extension of $K'$ is also
Liouvillian over $K$ by
\Cref{lem:liouvillian-correspondence}\textup{(i)}.
The same argument applies to functional abelian relations.
\end{remark}
\section{Rational transverse differential equations}\label{sec:rational-transverse-equation}

Let $X$ be a smooth projective variety, let $\W$ be a $k$-web, and let $\F$ be a foliation transversal to $\W$. Whenever necessary, after passing to a decomposing cover, we suppose 
\[
   \W\boxt\F=\F_1\boxt\cdots\boxt\F_k\boxt\F.
\]

In this section, we construct a rational transverse differential equation satisfied by every element of $\Sol_{\F}(\W)$. We use the transverse jet formalism of \cite{Fazoli2025a,Fazoli2025b}. For the basic theory of jets and differential equations, we refer to \cite{Deligne1970,Spencer69}.

\subsection{Transverse jets}

For $N\geq0$, let $\jet^N$ be the bundle of $N$-jets of functions, and let
\[
j^N: \OO_X \rightarrow \jet^N
\]
be the $\C$-linear morphism corresponding to taking the $N$-jet of a function $f$.

Let $\F$ be a foliation on $X$, set $K=\C(X)$, and choose a $K$-basis
$v_1,\ldots,v_{n-1}$ of $T_{\F}\otimes K$. The germs of first integrals
of $\F$ are exactly the common solutions of
\[
   v_a(f)=0\qquad(1\leq a\leq n-1).
\]

For a rational vector field $v$ tangent to $\F$ and $N\geq1$, the
\emph{$(N-1)$-th prolongation} of $v$ (see
\cite[Definition~1.2.3]{Spencer69}) is the $\OO_X$-linear morphism
\[
   v^{(N-1)}:\jet^N\longrightarrow\jet^{N-1}\otimes\mathcal M_X
\]
determined by
\[
   v^{(N-1)}\bigl(j^N f\bigr)=j^{N-1}\bigl(v(f)\bigr)
\]
for every germ of an analytic function $f$. Its kernel consists of the
$N$-jets that solve $v(f)=0$ up to order $N$.

\begin{definition}\label{def:transverse jets}
We define $T^N\subset\jet^N$ by
\[
   T^N=\bigcap_{a=1}^{n-1}\ker v_a^{(N-1)}.
\]
Following the
terminology of \cite[\S~3.8]{Fazoli2025b}, we refer to this sheaf as the
sheaf of \emph{transverse jets}
\[
   T^N=\mathcal P^N_{X/\F}(\dd_{\F}),
\]
where $\dd_{\F}:\OO_X\rightarrow\Omega^1_{\F}$ is the derivation along the
leaves of $\F$.
\end{definition}

The common kernel is unchanged under an invertible rational change of
tangent frame, so $T^N$ depends only on $\F$. In foliated coordinates
$(x_1,\ldots,x_{n-1},t)$ at a regular point, with $\F$ defined by $\dd t$,
use the local tangent frame $\partial_{x_1},\ldots,\partial_{x_{n-1}}$.
The conditions $j^{N-1}(\partial_{x_a}f)=0$ annihilate exactly the jet
coefficients involving at least one $x_a$. Hence, over the regular locus
of $\F$, $T^N$ is a subbundle of $\jet^N$ of rank $N+1$, whose fibre at
each point is generated by the $N$-jets of the local first integrals of $\F$.

The bundle $T^N$ carries a \emph{canonical flat $\F$-partial connection}, denoted by $\partial^N$. Over the regular locus of $\F$ it can be described directly. For a germ of an analytic function $f$, we already noticed that
\[
j^N f \in T^N \iff f\in \OO_{X/\F}.
\]
The connection $\partial^N$ is the unique flat partial connection on $T^N$ such that
\[
\partial^N(j^N f) =0 \quad\text{for every } f\in \OO_{X/\F};
\]
in the frame of a foliated coordinate introduced below, it is the derivative of the coefficients along the leaves. This description is all that will be used in this paper. The intrinsic construction of $T^N$ and $\partial^N$ on the whole of $X$, including the singular set of $\F$, is carried out in \cite[\S~3.8]{Fazoli2025b}, see also \cite[\S~4.3]{Fazoli2025a}.

For the purpose of this work, we will need suitable frames for $T^N$, which
can be obtained using \emph{infinitesimal automorphisms}. Recall that an
infinitesimal automorphism of $\F$ is a vector field $w$ such that
\[
    [w,v] \in T_{\F} \text{ for every } v\in T_{\F}.
\]
Equivalently, denoting also by $w$ the corresponding section of $N_{\F}$, we
have
\[
    \nabla_{\Bott} w=0,
\]
where $\nabla_{\Bott}$ is the Bott connection. An infinitesimal automorphism
tangent to $\F$ induces the zero section of $N_{\F}$; from now on, we only
consider infinitesimal automorphisms that are \emph{generically transverse}
to $\F$, that is, that induce a nonzero flat section of $N_{\F}$.

Every infinitesimal automorphism preserves local first integrals: if
$f\in\OO_{X/\F}$ and $v$ is tangent to $\F$, then
\[
   v\bigl(w(f)\bigr)=w\bigl(v(f)\bigr)+[v,w](f)=0,
\]
and thus $w(f)\in\OO_{X/\F}$. Over the open set where $w$ is regular and
transverse to $\F$, it determines a frame of $T^N$ in which
\[
j^N f = \bigl(f, w(f), \ldots, w^N(f)\bigr) \text{ for every } f\in \OO_{X/\F}.
\]
In particular, a foliated coordinate $t$ determines the infinitesimal
automorphism $\partial_t$, and the natural frame determined by it is such
that
\[
j^N (G\circ t) = \bigl(G(t), G'(t), \ldots, G^{(N)}(t)\bigr).
\]

Finally, we conclude our discussion of transverse jets by describing a naturally associated short exact sequence. For $N\geq1$, regarding $T^N$ as a rational bundle on $X$, truncation of the jets yields an exact sequence of flat partial connections
\begin{equation}
\label{eq:transverse-jet-sequence}
0\longrightarrow
\bigl((\CNF)^{\otimes N},\nabla_{\Bott}^{\otimes N}\bigr)
\longrightarrow(T^N,\partial^N)
\longrightarrow(T^{N-1},\partial^{N-1})\longrightarrow 0,
\end{equation}
where $\nabla_{\Bott}$ is the Bott connection. The exact sequence is compatible with the frame description of the bundles. In a foliated coordinate $t$, the symbol line is generated by $\dd t^{\otimes N}$.

\subsection{Formal relations and rational annihilators}
\label{subsec:rationality-transversality}

Following \Cref{def:transverse jets}, let $T_i^N\subset \jet^N$ be the respective transverse jets associated to the foliations $\F_i$. We set $R^N$ to be the bundle of \emph{formal functional relations} on $\jet^N$, that is,
\[
R^N=\left\{(\xi_1,\ldots,\xi_k,\xi)\ \middle|\
 \xi_i\in T_i^N,\ \xi\in T^N,\ \xi + \sum_{i=1}^k\xi_i=0\text{ in }\jet^N\right\},
\]
and let $R^N_{\F}\subset T^N$ be its image under projection to the last factor. Remark that $j^N u \in R^N_{\F}$ for every $u\in \Sol_{\F}(\W)$. 

\begin{remark}
Recall that the distinguished foliation $\F$ is defined on the original
variety $X$. Set $K=\C(X)$ and choose a Galois decomposing cover with
function field $K'$. At the generic point, write
$T'^N=K'\otimes_K T^N$ and let $T_i'^N$ be the transverse jet spaces
of the factors of the pulled-back web. The projected space on the cover is
\[
    R_{\F}'{}^N=T'^N\cap\bigl(T_1'^N+\cdots+T_k'^N\bigr).
\]
The Galois group preserves $T'^N$ and permutes the $T_i'^N$, so
$R_{\F}'{}^N$ is Galois invariant. Hence this space and its annihilator
descend to $K$. We therefore regard $R_{\F}^N$ as a rational subbundle
of $T^N$ on $X$ and take all rational annihilators below over $\C(X)$.     
\end{remark}

Recall that an element $\Lambda \in (T^N)^\vee$ is a \emph{transverse
differential operator}, as in \cite[\S~2]{Fazoli2025b}. Roughly speaking, it
corresponds to the restriction of a differential operator to the ring of
first integrals of $\F$. We consider the rational transverse differential
operators annihilating the general fibres of $R^N_{\F}$, that is,
\[
\Ann(R^N_{\F}) = \left\{\Lambda \in (T^N)^\vee\otimes\C(X) \;\middle|\;
\Lambda\bigl((R^N_{\F})_x\bigr)=0 \text{ for general } x\in X \right\}.
\]
We also consider the $\C(X)$-space of rational sections of $(T^N)^\vee$
vanishing on the $N$-jets of elements of $\Sol_{\F}(\W)$, that is,
\[
    \mathscr I^N_{\F}(\W) = \left\{\Lambda \in (T^N)^\vee\otimes \C(X)
    \;\middle|\; \Lambda(j^N u)=0 \text{ for every } u\in \Sol_{\F}(\W)
    \right\}.
\]
These spaces satisfy
\begin{equation}\label{eq:inclusion-rational-annihilator}
        \Ann(R^N_{\F}) \subset \mathscr I^N_{\F}(\W).
\end{equation}
Indeed, let $\Lambda\in\Ann(R^N_{\F})$ and $u\in\Sol_{\F}(\W)$. At every
point $y$ close to the base point, the germ of $u$ at $y$ still belongs to
$\Sol_{\F}(\W)_y$, and hence $j^Nu\in(R^N_{\F})_y$. Therefore the
meromorphic germ $\Lambda(j^Nu)$ vanishes at every such general $y$, and thus
it vanishes identically.

\begin{lemma}
\label{lem:rational-existence}
Set $N_0=k(k+1)/2$. For $N\geq N_0$, $\mathscr I^N_{\F}(\W)\neq0$.
\end{lemma}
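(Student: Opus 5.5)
By \eqref{eq:inclusion-rational-annihilator} it suffices to show $\Ann(R^N_{\F})\neq0$, i.e.\ that the rational subbundle $R^N_{\F}$ is a proper subbundle of $T^N$ at a general point. Since $T^N$ has rank $N+1$ over the regular locus, this reduces to the pointwise bound $\dim(R^N_{\F})_x\le N$ at a general $x$ (after passing to a decomposing cover, which is harmless by the Galois descent remark). A nonzero element of the annihilator over the cover's function field, taken in the descended space, then gives a nonzero rational $\Lambda$ over $\C(X)$.

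To bound $\dim(R^N_{\F})_x$, I will work in $\jet^N_x$ and count dimensions. Each $T^N_i$ and $T^N$ has rank $N+1$ and consists of the $N$-jets of functions of one linear-at-first-order variable, with pairwise distinct differentials $\ell_1,\dots,\ell_k,\ell$ at $x$. The projection $R^N\to R^N_{\F}$ is surjective by definition, and $\dim R^N$ is the dimension of the kernel of the sum map $T_1^N\oplus\cdots\oplus T_k^N\oplus T^N\to\jet^N_x$. I will filter by order. The degree-$m$ graded pieces of the $k+1$ spaces are the lines spanned by $\ell_1^m,\dots,\ell_k^m,\ell^m$ in $\Sym^m(T^*_xX)$. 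Distinct linear forms have linearly independent $m$-th powers as long as $m\ge k$, already in two variables. Hence the graded kernel in degree $m$ has dimension at most $\max(0,k+1-(m+1))=\max(0,k-m)$ for $m\ge1$, and at most $k$ in degree $0$. Summing, $\dim R^N\le k+\sum_{m=1}^{k-1}(k-m)=k+k(k-1)/2$, which is the classical Abel-type bound.

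This alone is not enough, because $R^N_{\F}$ is a projection. I will therefore use the filtration directly on the last factor. If $(\xi_i,\xi)\in R^N$ and the components of $\xi$ of degrees $<m$ vanish, the degree-$m$ part of $\xi$ must lie in $\operatorname{span}(\ell_i^m)+\ldots$. Concretely, for $m\ge k$ the line $\ell^m$ is independent of $\ell_1^m,\dots,\ell_k^m$, but the relation still involves lower-order terms of the $\xi_i$. The cleaner route is to take the associated graded of $R^N_{\F}$ under the truncation filtration \eqref{eq:transverse-jet-sequence}, so that $\dim R^N_{\F}\le\sum_m\dim\operatorname{gr}_m$. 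I expect the main obstacle to be controlling this graded piece: lower-order parts of the $\xi_i$ contribute to degree $m$ only through derivatives of the chosen first integrals, which are nonlinear. To handle it, I will instead bound $\dim R^N$ and note that $R^N\to R^N_{\F}$ has kernel $R^N(\W)$, the formal relations of $\W$ alone.

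Put together, $\dim R^N_{\F}\le\dim R^N\le k(k+1)/2$ up to the constant contribution. Redoing the count with $k+1$ forms gives $\dim R^N\le k+1+\sum_{m=1}^{k}(k+1-m-1)\le k(k+1)/2+1$ for $N\ge k$. We subtract $1$ for the constants: the relation $(c_1,\dots,c_k,c)$ with $\sum c_i+c=0$ projects onto constants, and the remaining constant directions lie in the kernel. This yields $\dim R^N_{\F}\le N_0$. Therefore for $N\ge N_0$ we get $\dim R^N_{\F}\le N_0<N+1=\rank T^N$, so $\Ann(R^N_{\F})\neq0$ and hence $\mathscr I^N_{\F}(\W)\neq0$. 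The delicate point to verify carefully is the constant bookkeeping and the genericity that makes the pointwise dimension equal to the rational rank, which I would justify by semicontinuity.
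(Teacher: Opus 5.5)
Your second paragraph is the paper's proof, and it already finishes the job. The order filtration gives
$\dim R^N_x\le\dim R^0_x+\sum_{m=1}^{N}\dim\ker(R^m_x\to R^{m-1}_x)\le k+\sum_{m=1}^{k-1}(k-m)=k(k+1)/2=N_0$.
Since $R^N_{\F}$ is the image of $R^N$ under a linear projection, $\dim(R^N_{\F})_x\le N_0<N+1$ for $N\ge N_0$.

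Your objection that this is ``not enough, because $R^N_{\F}$ is a projection'' is unfounded: a linear image never has larger dimension. So your third and fourth paragraphs should simply be dropped. In particular, the recount with $k+1$ in degree zero is an overcount, because $R^0_x=\{(c_1,\dots,c_k,c)\mid\sum_ic_i+c=0\}$ has dimension exactly $k$. The ad hoc ``subtract $1$'' only repairs that overcount in a roundabout way. If you instead justify it by the constant relations $(c_1,\dots,c_k,0)$ in the kernel of $R^N\to R^N_{\F}$, it fails for $k=1$.

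Two small precisions in the part you keep. First, the conormals must be pairwise nonproportional, not merely distinct. Second, for $1\le m<k$ you need that any $m+1$ of the $m$-th powers are independent on a general $2$-plane. This is the same Vandermonde fact, but your statement only covers $m\ge k$.
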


\begin{proof}
    By \eqref{eq:inclusion-rational-annihilator}, it is enough to prove that
    $\Ann(R^N_{\F})\neq 0$. For $M\geq1$, an element of the kernel of
    $R^M \rightarrow R^{M-1}$ is a tuple of pure symbols
    $\big( c_1\omega_1^{\otimes M},\ldots, c_k\omega_k^{\otimes M},
    c\,\omega^{\otimes M}\big)$ satisfying
    \[
        c_1\omega_1^{\otimes M}
        +\cdots+
        c_k\omega_k^{\otimes M}
        +c\,\omega^{\otimes M}
        =0.
    \]
At a point $x\in U_{\W\boxt\F}$, choose a general two-dimensional
linear subspace $P\subset T_xX$ such that the restrictions of the $k+1$
conormals to $P$ are nonzero and pairwise nonproportional. Their $M$-th
powers determine distinct points of the rational normal curve in
$\Pj(\Sym^M P^*)$, and span a vector space of dimension
$\min(k+1,M+1)$. Therefore
\[
   \dim\operatorname{Span}
   \{\omega_1^{\otimes M},\ldots,\omega_k^{\otimes M},
     \omega^{\otimes M}\}
   \geq\min(k+1,M+1),
\]
and hence
\[
   \dim\ker(R^M_x\longrightarrow R^{M-1}_x)
   \leq\max(0,k-M).
\]
    Since $\rank R^0_x=k$,
    \[
        \rank R^N_x\leq k+\sum_{M=1}^{k-1}(k-M)=\frac{k(k+1)}2=N_0.
    \]
    Thus, for $N\geq N_0$ and every $x\in U_{\W\boxt\F}$,
    \[
        \rank (R^N_{\F})_x\leq N_0<\rank T^N=N+1.
    \]
    Since the dimension of the fibres $(R^N_{\F})_x$ is constant on a dense open subset, linear algebra over $\C(X)$ produces a nonzero rational section of $(T^N)^\vee$ vanishing on the general fibres of $R^N_{\F}$. Therefore $\Ann(R^N_{\F})\neq0$.
\end{proof}

\subsection{Minimal rational annihilators}

\begin{definition}
\label{def:rational-relation-module} A \emph{rational annihilator of order at most $N$} is an element of $\mathscr I^N_{\F}(\W)$.  If $\Lambda\neq 0$, its \emph{order} is the smallest integer $m\leq N$ such that $\Lambda$ factors through the truncation $T^N\to T^m$. The \emph{symbol} of $\Lambda$ is its restriction to $(\CNF)^{\otimes m}$ using \eqref{eq:transverse-jet-sequence}, defining
\[
   \sigma(\Lambda):(\CNF)^{\otimes m}\rightarrow \mathcal M_X.
\]
\end{definition}

Equivalently, let $w$ be an infinitesimal automorphism of $\F$ and $\Lambda \in \mathscr I^N_{\F}(\W)$ be a rational annihilator. We write 
\begin{equation}\label{eq:transverse-diff-operator}
    \Lambda \big(j^N f \big) = \sum_{\ell =0}^N c_{\ell} \, w^{\ell}(f),
\end{equation}
for all $f\in \OO_{X/\F}$. Since constant functions are always solutions of $\Lambda$, $c_0=0$. The order of $\Lambda$ is the largest $\ell$ for which $c_{\ell} \neq 0$, and its symbol is $c_m \, w^{m}$. In the particular case where $w$ is induced by $\partial_t$ for a foliated coordinate $t$, this becomes
\[
   \Lambda\big(j^N(G\circ t)\big)  = \sum_{\ell=1}^m c_\ell \, \big(G^{(\ell)}\circ t\big).
\]
The dual bundle $(T^N)^\vee$ carries the dual flat partial connection, given for $v\in T_{\F}$ by
\[
   \Bigl(\bigl(\partial^N\bigr)^\vee_v\Lambda\Bigr)(s)=v\bigl(\Lambda(s)\bigr)-\Lambda\bigl(\partial^N_v s\bigr);
\]
in terms of differential operators, it is induced by the commutator $[v,\Lambda]$, see \cite[Theorem~2.4]{Fazoli2025b}. If $w$ is an infinitesimal automorphism and we express $\Lambda$ as in Equation \eqref{eq:transverse-diff-operator}, then, evaluating on the horizontal sections $j^Nf$ with $f\in\OO_{X/\F}$ and using that each $w^{\ell}(f)$ is again a first integral,
\begin{equation}\label{eq:connection-transverse-diff-operator}
    \Big(\big(\partial^N\big)^\vee_v \Lambda \Big)( j^N f) = \sum_{\ell =1}^N v(c_{\ell}) \, w^{\ell}(f).
\end{equation}

\begin{lemma}
\label{lem:rational-transversality}
Assume $\Sol_{\F}(\W)\neq\C$. Let $m$ be the least order of a nonzero
rational annihilator and let $\Lambda$ have order $m$. Then
\[
   2\leq m\leq N_0,
\]
and the following hold.
\begin{enumerate}[label=\textup{(\roman*)}]
\item The annihilators of order $m$ form the rational line $\C(X)\Lambda \subset \mathscr I^m_{\F}(\W)$.
\item This rational line is preserved by the canonical connection on $(T^N)^\vee$.

\item If $w$ is an infinitesimal automorphism of $\F$, generically transverse to $\F$ and defined over a Liouvillian extension $L$, the normalized differential operator induced by $\Lambda$ is
\begin{equation*}
   \mathcal D_w=
   w^m+\sum_{\ell=1}^{m-1}c_\ell\, w^\ell,
\end{equation*}
where $c_{\ell}$ are first integrals of $\F$ over $L$. Moreover, $\mathcal D_w$ does not depend on $\Lambda$.

\item In a local first integral $t$ of $\F$, the normalized equation is
\begin{equation*}
   \mathcal D_t=
   \partial_t^m+\sum_{\ell=1}^{m-1}A_\ell(t)\partial_t^\ell,
\end{equation*}
where the $A_\ell$ are meromorphic first integrals of $\F$. Moreover, $\mathcal D_t$ does not depend on $\Lambda$. If $t=h(\bar t)$ is a change of foliated coordinates, then
\[
   \mathcal D_{\bar t}(G\circ h)
   =(h'\circ\bar t)^m(\mathcal D_tG)\circ h.
\]
\end{enumerate}
\end{lemma}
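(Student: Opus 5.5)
The bounds on $m$ come first.

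For the upper bound, \Cref{lem:rational-existence} gives a nonzero element of $\mathscr I^{N_0}_{\F}(\W)$. Its order is at most $N_0$, so $m\le N_0$.

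For the lower bound, recall that constants are annihilated, so $c_0=0$ in \eqref{eq:transverse-diff-operator}. An annihilator of order $1$ is therefore $c_1w$ with $c_1\neq0$. It forces $w(u)=0$ for every $u\in\Sol_{\F}(\W)$, and since $u$ is already a first integral of $\F$, this makes $u$ constant. That contradicts $\Sol_{\F}(\W)\neq\C$, so $m\ge2$.

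For (i), take two annihilators $\Lambda,\Lambda'$ of order $m$ with symbols $\sigma,\sigma'$. The ratio $\sigma'/\sigma$ is a rational function $g$, because symbols are rational sections of the rational line $((\CNF)^{\otimes m})^\vee$. Then $\Lambda'-g\Lambda$ lies in $\mathscr I^m_{\F}(\W)$ and has order less than $m$, so it vanishes by minimality.

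For (ii), let $v$ be a rational vector field tangent to $\F$ and set $\Lambda_v=(\partial^m)^\vee_v\Lambda$.
\begin{itemize}
\item $\Lambda_v$ is rational: the dual connection is defined over $\C(X)$ on rational sections.
\item $\Lambda_v$ annihilates solutions: for $u\in\Sol_{\F}(\W)$ the jet $j^mu$ is $\partial^m$-horizontal, so $\Lambda_v(j^mu)=v(\Lambda(j^mu))=0$.
\item $\Lambda_v$ has order at most $m$ and symbol $v(c_m)$, by \eqref{eq:connection-transverse-diff-operator} in a frame given by a local infinitesimal automorphism.
\end{itemize}
Hence $\Lambda_v-\frac{v(c_m)}{c_m}\Lambda$ is a rational annihilator of order less than $m$. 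It vanishes by minimality, so $\Lambda_v\in\C(X)\Lambda$ and the line is preserved.

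For (iii), fix $w$ over $L$ and divide \eqref{eq:transverse-diff-operator} by $c_m$ to get $\mathcal D_w$. By (i), any other choice $g\Lambda$ gives the same normalized operator, so $\mathcal D_w$ does not depend on $\Lambda$. Next, compute $(\partial^m)^\vee_v(c_m^{-1}\Lambda)$ via \eqref{eq:connection-transverse-diff-operator}; this is legitimate because $w$ preserves first integrals, so the $w^\ell f$ are horizontal coordinates. The result is $\sum_{\ell<m} v(c_\ell/c_m)\,w^\ell$. By (ii) this operator is proportional to $\Lambda$, yet it has order less than $m$, so it vanishes. Hence $v(c_\ell/c_m)=0$ for all $v$ tangent to $\F$, and each normalized coefficient is a first integral. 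These coefficients lie in $L$ because $\Lambda$ is rational and the frame change from $j^m$ to $(w^\ell f)_\ell$ is defined over $L$.

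For (iv), the same argument applies with $w=\partial_t$ and the meromorphic local frame, so the $A_\ell$ are meromorphic first integrals. For the transformation rule, write $t=h(\bar t)$; then $\partial_{\bar t}=(h'\circ\bar t)\,\partial_t$ as infinitesimal automorphisms. The operator $G\circ h\mapsto (h'\circ\bar t)^m(\mathcal D_tG)\circ h$ is a transverse operator with the same kernel on $\Sol_{\F}(\W)$. It is a multiple of $\Lambda$ with leading coefficient $1$ in $\partial_{\bar t}$, by the chain rule (Faà di Bruno): the leading term of $\partial_{\bar t}^m$ is $(h')^m\partial_t^m$. By the uniqueness in (i), it must equal $\mathcal D_{\bar t}$.

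The main obstacle is (ii): one has to check carefully that the connection derivative of a rational annihilator stays rational and keeps its order. This rests on the frame formula \eqref{eq:connection-transverse-diff-operator} and on the compatibility of the symbol sequence \eqref{eq:transverse-jet-sequence} with the Bott connection. I would argue this locally in foliated coordinates and then invoke rationality globally.
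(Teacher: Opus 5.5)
Your proposal is correct and follows essentially the same route as the paper. It uses the symbol-ratio argument for (i), horizontality of $j^m u$ plus minimality for (ii), the Leibniz-rule normalization whose derivative drops in order and therefore vanishes for (iii), and a comparison of leading symbols via the chain rule for (iv). One point in (ii) is left implicit: the factor $v(c_m)/c_m$ is rational because it is the ratio of the rational symbols of $\Lambda_v$ and $\Lambda$, and this is exactly why the paper can simply invoke (i) at that step.
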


\begin{proof}
The existence of a minimal rational annihilator and the upper bound follow from \Cref{lem:rational-existence}. For the lower bound, observe that a differential operator of order at most one admitting all constant functions as solutions and also a non-constant solution must be identically zero. 

To prove \textup{(i)}, let $\Lambda'$ be another rational annihilator of minimal order. Since the symbols $\sigma(\Lambda)$ and $\sigma(\Lambda')$ are rational sections of the same line-bundle $N_{\F}^{\otimes m}$, there is a non-zero rational function $\rho$ such that 
\[
\sigma(\Lambda) - \rho \, \sigma(\Lambda')=0.
\]
Hence the symbol of $\Lambda- \rho \, \Lambda'$ is zero, and thus it has order at most $m-1$.  Since $m$ is the minimal order of a non-vanishing rational annihilator, it follows that 
\[
\Lambda = \rho \, \Lambda'.
\]
This proves \textup{(i)}.

For every rational vector field $v$ tangent to $\F$ and $f\in \Sol_{\F}(\W)$, we have that
\[
\Big(\big(\partial^N \big)^\vee_v \Lambda\Big)(j^m f) = v(\Lambda(j^m f)) - \Lambda(\partial^N_v j^m f) =0 ,
\]
thus $\big(\partial^N \big)^\vee_v \Lambda \in \mathscr I_{\F}^m(\W)$. Since $\big(\partial^N \big)^\vee_v \Lambda$ also has order $\le m$, claim \textup{(ii)} follows from \textup{(i)}.

For \textup{(iii)}, since $w$ is generically transverse to $\F$, the map
$j^N f\mapsto(f,w(f),\ldots,w^N(f))$ trivializes $T^N$ over $L$, so that
$\Lambda$ can be written as in \eqref{eq:transverse-diff-operator} with
coefficients $b_\ell\in L$ and $b_m\neq0$; set
$D_w=b_m^{-1}\Lambda$ and $c_\ell=b_\ell/b_m$ for $\ell<m$.
Extend the canonical partial connection to $L$ by the Leibniz rule.
For every rational vector field $v$ tangent to $\F$, \textup{(ii)} gives
$\lambda(v)\in\C(X)$ such that
$\bigl(\partial^N\bigr)^\vee_v\Lambda=\lambda(v)\Lambda$, and the Leibniz rule
gives
\[
   \bigl(\partial^N\bigr)^\vee_v(D_w)
   =\Bigl(v\bigl(b_m^{-1}\bigr)+b_m^{-1}\lambda(v)\Bigr)\,\Lambda.
\]
By Equation~\eqref{eq:connection-transverse-diff-operator}, the left-hand
side has order at most $m-1$, since the coefficient of $w^m$ in $D_w$ is
constant, while a nonzero multiple of $\Lambda$ has order exactly $m$. Hence
both sides vanish. Applying
Equation~\eqref{eq:connection-transverse-diff-operator} once more, we obtain
$\sum_{\ell=1}^{m-1} v(c_\ell)\,w^\ell(f)=0$ for every local first integral
$f$, and therefore $v(c_\ell)=0$ for every $\ell$ and every
rational vector field $v$ tangent to $\F$; that is, each $c_\ell$ is a first
integral of $\F$ in $L$. Finally, $D_w$ does not depend on $\Lambda$, since
by \textup{(i)} any other choice is $\rho\Lambda$ with $\rho\in\C(X)^*$, and
normalization removes $\rho$.

The first part of \textup{(iv)} is analogous to the proof of \textup{(iii)}. The second claim follows from the chain rule, since by \textup{(i)} it is enough to compute the transformation law of the symbol.
\end{proof}

\subsection{The normalized transverse equation}
\label{subsec:abel-fpc}
Let $\Lambda$ be a minimal rational annihilator of order $m$. Its symbol
is a generically invertible rational morphism
\[
   \sigma(\Lambda):
   (\CNF)^{\otimes m}\rightarrow\mathcal M_X.
\]
Define
\begin{equation}\label{eq:normalized-transverse-equation}
   D_\Lambda
   =
   \sigma(\Lambda)^{-1}\circ\Lambda:
   T^m\rightarrow \mathcal M_X \otimes (\CNF)^{\otimes m}.
\end{equation}
The morphism $D_\Lambda$ is independent of the multiplication of
$\Lambda$ by an element of $\C(X)^*$, and it defines a rational splitting of
the short exact sequence
\[
   0\longrightarrow
   \big((\CNF)^{\otimes m},\nabla_{\Bott}^{\otimes m}\big)
   \longrightarrow
   (T^m,\partial^m)
   \longrightarrow
   (T^{m-1},\partial^{m-1})
   \longrightarrow 0.
\]
Moreover, this splitting is horizontal. Indeed, in the frame determined by a
foliated coordinate $t$,
\[
 D_\Lambda(j^m(G\circ t))=
 \left(G^{(m)}(t)+\sum_{\ell=1}^{m-1}A_\ell(t) \, G^{(\ell)}(t)\right)
 \dd t^{\otimes m},
\]
and, by \textup{(iv)} of \Cref{lem:rational-transversality}, the coefficients
$A_\ell$ are meromorphic first integrals of $\F$; this is precisely the
compatibility of $D_\Lambda$ with the partial connections $\partial^m$ and
$\nabla_{\Bott}^{\otimes m}$. Hence $D_\Lambda$ is a rational transverse
differential equation of order $m\geq2$ on $(\OO_X,\dd_{\F})$ in the sense of
\cite{Fazoli2025a}. This is the transverse differential equation that will be used in the
proof of \Cref{thm:abel-to-affine}.

We remark that, when $\F$ admits a rational infinitesimal automorphism $w$,
in the frame determined by it, one has
\[
   D_\Lambda\big(j^m f\big)
   =
   \left(
      w^m(f)
      +
      \sum_{\ell=1}^{m-1}
         c_{\ell}\, w^{\ell}(f)
   \right)\omega^{\otimes m},
\]
where $\omega$ is the rational conormal one-form determined by $\omega(w)=1$
and, by \textup{(iii)} of \Cref{lem:rational-transversality}, each $c_{\ell}$
is a rational first integral of $\F$. We will also use this special case
below.

\begin{remark}
It is useful to compare the rational transverse differential equation constructed above with other natural differential equations associated with a web. See, for instance, the remark following \cite[Proposition~2.1.3]{Pirio2004}.

The first is the \emph{Wronskian} associated with $\Sol_{\F}(\W)$, namely the
minimal-order analytic differential equation whose solution space is
$\Sol_{\F}(\W)$. Its order equals $\dim_{\C}\Sol_{\F}(\W)_x$ and never
exceeds the order of $D_{\Lambda}$: indeed, $\Sol_{\F}(\W)_x$ is contained in
the space of local solutions of $D_{\Lambda}$, which is a $\C$-vector space
of dimension at most $m$.  The Wronskian construction, however, does not by itself ensure
that the coefficients are rational, and therefore does not
directly establish Liouvillianity over $\C(X)$.

A second construction is provided by the \emph{Abel method} (see \cite{Pirio2006Selecta}). Although the differential equations obtained by this method are initially defined only locally, one can verify that, for webs without monodromy, the resulting local equations are compatible on intersections. Consequently, they define a global transverse differential equation. The order of this equation may be considerably larger than the order of $D_{\Lambda}$, but the Abel method has the advantage of being constructive. We will not make use of this construction here.

A third construction is H\'enaut's connection
\cite{Henaut2004}: for a planar $k$-web, the abelian relations are identified
with the flat sections of a rational connection on a bundle of rank
$(k-1)(k-2)/2$; see also \cite[\S~6.3]{PereiraPirio2015}. That connection
governs all the components of the abelian relations simultaneously, while
$D_\Lambda$ isolates a single transverse direction. It is this
one-direction-at-a-time nature, together with the control of the coefficients
provided by \Cref{lem:rational-transversality}, that the arguments of
\Cref{sec:no-rational-first-integrals,sec:component-modules,sec:resonance-descent,sec:proof-main-theorem} require.
\end{remark}

\section{From local abelian relations to a transverse affine structure}\label{sec:transverse-affine-structure}

We now extract a transverse affine structure from the normalized transverse
differential equation constructed in \eqref{eq:normalized-transverse-equation}. 

\begin{theorem}\label{thm:abel-to-affine}
Let $X$ be a smooth projective variety, let $\W$ be a $k$-web, and let $\F$
be a foliation transversal to $\W$. If $\Sol_{\F}(\W)_x\neq\C$ at a general regular point $x$, then $\F$ is transversely affine.
\end{theorem}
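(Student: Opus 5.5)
We use the normalized transverse equation $D_\Lambda$ of order $m\geq 2$ from \eqref{eq:normalized-transverse-equation}. Write it in a foliated coordinate $t$ as $\mathcal D_t=\partial_t^m+A_{m-1}(t)\partial_t^{m-1}+\cdots+A_1(t)\partial_t$. The aim is to turn the subleading coefficient $A_{m-1}$ into a rational one-form $\eta$ with $\dd\omega=\omega\wedge\eta$ and $\dd\eta=0$.

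\textbf{Local one-forms.} For each foliated coordinate $t$, set $\eta_t=-\tfrac{1}{m}A_{m-1}(t)\,\dd t$. Since $A_{m-1}$ is a first integral by \Cref{lem:rational-transversality}(iv), $\eta_t$ is closed and $\dd(\dd t)=\dd t\wedge\eta_t$ holds trivially.

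\textbf{Change of coordinates.} Let $t=h(\bar t)$. Expand $\mathcal D_{\bar t}(G\circ h)=(h')^m(\mathcal D_tG)\circ h$ using Fa\`a di Bruno. The coefficient of $G^{(m-1)}$ gives
\[
\bar A_{m-1}=h'A_{m-1}(h)+\binom{m}{2}\frac{h''}{h'}.
\]
Hence
\[
\eta_{\bar t}=h^*\eta_t-\frac{m-1}{2}\,\dd\log h'.
\]
Since $\dd t=h'\,\dd\bar t$, this is exactly the rescaling rule of \Cref{def:ta} up to the factor $\tfrac{m-1}{2}$. So define
\[
\eta'_t=\frac{2}{m-1}\eta_t=-\frac{2}{m(m-1)}A_{m-1}\,\dd t.
\]
Then $\eta'_{\bar t}=\eta'_t-\dd\log h'$, matching $\omega\mapsto g\omega$, $\eta\mapsto\eta-\dd g/g$ with $g=h'$.

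\textbf{Globalization.} Fix a rational one-form $\omega$ defining $\F$ and write locally $\omega=g_t\,\dd t$. Set
\[
\eta=\eta'_t-\dd g_t/g_t.
\]
By the transformation law, $\eta$ does not depend on $t$, so these local forms glue to a meromorphic one-form near every general point. It is closed, and $\dd\omega=\dd g_t\wedge\dd t=\omega\wedge(-\dd g_t/g_t)=\omega\wedge\eta$, where the last step uses $\omega\wedge\eta'_t=0$.

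\textbf{Rationality (main obstacle).} It remains to show $\eta$ is rational. A convenient way is to work intrinsically with the rational section $D_\Lambda$, which is a rational splitting. Alternatively, pick a rational vector field $w$ with $\omega(w)=1$ and compute in its frame: $D_\Lambda$ has rational coefficients in $w$-derivatives, and its coefficient of $w^{m-1}$ is rational. Expressing $w=g_t^{-1}\partial_t+(\text{tangent})$ and converting shows that $\eta$ equals an explicit rational expression built from $c_{m-1}$, $\omega$, and $\dd\omega$. I expect this bookkeeping to be the delicate step: the tangent component of $w$ contributes terms, and one must check that they are absorbed by the rational term $\dd\omega$ or by leafwise derivatives of rational functions.

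The fallback is to observe that $\eta$ is meromorphic near general points and is determined by $\Lambda$ and $\omega$ via algebraic operations over $\C(X)$ on jet bundles. Then $\eta$ is a rational section of $\Omega^1_X$, so $\F$ is transversely affine.
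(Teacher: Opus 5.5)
Your overall strategy matches the paper's. You take the subleading coefficient $A_{m-1}$ of the normalized transverse equation, rescale by $q=\binom{m}{2}$, derive the cocycle rule under $t=h(\bar t)$, and glue against a rational defining form $\omega$. Two points need repair.

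First, the signs. Fa\`a di Bruno actually gives $\bar A_{m-1}=h'A_{m-1}(h)-\binom{m}{2}h''/h'$. As a check, $\partial_t^2-\partial_t$, with solutions $1,e^t$, becomes $\partial_{\bar t}^2$ under $\bar t=e^t$. Also, passing from $\dd t$ to $\dd\bar t=\dd t/h'$ is the rescaling by $g=1/h'$, not $g=h'$. The two slips cancel, so your $\eta'_t=-\frac1q A_{m-1}\,\dd t$ is the right choice and obeys $\eta'_{\bar t}=\eta'_t+\dd\log h'$. With the law as you wrote it, however, $\eta'_t-\dd g_t/g_t$ would change by $-2\,\dd\log h'$ (because $g_{\bar t}=h'g_t$) and would not glue.

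Second, and this is the genuine gap: rationality of $\eta$ is announced rather than proved. That is the entire content of the theorem, since every foliation is locally transversely affine ($\omega=\dd t$, $\eta=0$). Your fallback is not an argument. A form meromorphic near general points need not be rational (think of $e^{x}\,\dd x$ on $\Pj^1$), and ``determined by algebraic operations over $\C(X)$'' is exactly what must be shown.

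The paper closes this analytically. The glued form is meromorphic on all of $X\setminus\operatorname{Sing}(\F)$, because at every regular point $D_\Lambda$ is meromorphic in the frame of a local first integral. Since $\operatorname{Sing}(\F)$ has codimension at least two, Levi's extension theorem makes $\eta$ meromorphic on $X$, hence rational as $X$ is projective.

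Your route through a rational $w$ with $\omega(w)=1$ also works, and the bookkeeping is not delicate. With $\omega=g_t\,\dd t$ one has $w(t)=1/g_t$, and induction gives $w^\ell(G(t))=g_t^{-\ell}G^{(\ell)}-\binom{\ell}{2}g_t^{-\ell}w(g_t)\,G^{(\ell-1)}+\cdots$. Hence $D_\Lambda=w^m+c_{m-1}w^{m-1}+\cdots$, with $c_\ell\in\C(X)$, has $A_{m-1}=g_tc_{m-1}-q\,w(g_t)$. The tangential part of $w$ enters only through $w(g_t)$. Since $\dd\omega=\dd\log g_t\wedge\omega$, it is absorbed by $i_w\dd\omega$:
\[
\eta=-\tfrac1q A_{m-1}\,\dd t-\dd\log g_t=i_w\dd\omega-\tfrac1q\,c_{m-1}\,\omega .
\]
The right-hand side is visibly rational. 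It satisfies $\dd\omega=\omega\wedge\eta$ for any integrable $\omega$ with $\omega(w)=1$, and it is closed because the middle expression is.

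Completed this way, your proof would be more algebraic than the paper's. It needs neither Levi's theorem nor the coordinate-change computation and gluing. It uses only the rational $w$-expansion of $\Lambda$ and part (iv) of \Cref{lem:rational-transversality} for closedness. As submitted, though, this step is only sketched, not done.
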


\begin{proof}
Let $\Lambda$ be a rational annihilator of minimal order $m\geq2$, as in
\Cref{lem:rational-transversality}, and let $D_\Lambda$ be its normalized
transverse equation \eqref{eq:normalized-transverse-equation}. Let $u$ be a
local first integral of $\F$ on an open set $V$ contained in the regular
locus of $\F$. By \textup{(iv)} of \Cref{lem:rational-transversality},
\[
   D_\Lambda\bigl(j^m(G\circ u)\bigr)
   =\Bigl(G^{(m)}(u)+\sum_{\ell=1}^{m-1}A_{\ell,u}(u)\,G^{(\ell)}(u)\Bigr)
   \dd u^{\otimes m},
\]
where the $A_{\ell,u}$ are meromorphic first integrals on $V$; there is no
zero-order term because the constants belong to $\Sol_{\F}(\W)_x$. Set
$q=\binom{m}{2}$, which is positive since $m\geq2$, and
\[
   \theta_u=\frac1q\,A_{m-1,u}(u)\,\dd u,
\]
a closed meromorphic one-form on $V$ vanishing on $T_{\F}$.

We first determine how $\theta_u$ depends on $u$. Let $\bar u$ be another
local first integral on $V$, so that $u=h(\bar u)$ for a holomorphic
function $h$ with $a=h'\circ\bar u$ nowhere zero. Writing
$D_{\Lambda,u}=\partial_u^{\,m}+\sum_{\ell}A_{\ell,u}\,\partial_u^{\,\ell}$,
and similarly for $\bar u$, \textup{(iv)} of \Cref{lem:rational-transversality}
gives $D_{\Lambda,\bar u}(G\circ h)=a^m\,(D_{\Lambda,u}G)\circ h$. Since
$G^{(\ell)}\circ h=(a^{-1}\partial_{\bar u})^{\ell}(G\circ h)$ and
\[
   a^m\bigl(a^{-1}\partial_{\bar u}\bigr)^m
   =\partial_{\bar u}^{\,m}
   -q\,\frac{\partial_{\bar u}a}{a}\,\partial_{\bar u}^{\,m-1}
   +\text{terms of order at most $m-2$},
\]
comparing the coefficients of $\partial_{\bar u}^{\,m-1}$ yields
\[
   A_{m-1,\bar u}=a\,(A_{m-1,u}\circ h)-q\,\frac{\partial_{\bar u}a}{a},
   \qquad\text{that is,}\qquad
   \theta_{\bar u}=\theta_u-\dd\log a .
\]

Now define a connection $\nabla_{\mathrm{aff}}$ on $\CNF$ over the regular
locus of $\F$ by requiring
\[
   \nabla_{\mathrm{aff}}(\dd u)=\dd u\otimes\theta_u
\]
for every local first integral $u$. This is consistent: if $u=h(\bar u)$ as
above, then $\dd u=a\,\dd\bar u$ and
\[
   \nabla_{\mathrm{aff}}(a\,\dd\bar u)
   =\dd a\otimes\dd\bar u+a\,\dd\bar u\otimes\theta_{\bar u}
   =\dd u\otimes(\dd\log a+\theta_{\bar u})
   =\dd u\otimes\theta_u .
\]
The connection is flat, since $\dd\theta_u=0$, and it extends the Bott
partial connection, since $\nabla_{\Bott}(\dd u)=0$ and $\theta_u$
vanishes on $T_{\F}$. 

It remains to see that $\nabla_{\mathrm{aff}}$ is rational. Let $\omega$ be
a rational one-form defining $\F$ and write
$\nabla_{\mathrm{aff}}(\omega)=\omega\otimes\eta$. On $V$ we have
$\omega=g\,\dd u$ with $g$ meromorphic, and hence
$\eta=\theta_u+\dd\log g$ is meromorphic on $V$. Thus $\eta$ is a
meromorphic one-form on the complement of
$\operatorname{Sing}(\F)$, which has codimension at least two;
by Levi's extension theorem it is meromorphic on
$X$, hence rational. Therefore $\nabla_{\mathrm{aff}}$ is a flat rational
connection on $\CNF$ extending the Bott partial connection. Its dual is a
flat rational connection on $\NF$ extending the Bott connection, that is,
a singular transverse affine structure in the sense of \Cref{def:ta}, and
$\F$ is transversely affine.
\end{proof}

\begin{remark}
The connection $\nabla_{\mathrm{aff}}$ has an intrinsic description. The
kernel of $D_\Lambda$ is a rank-$m$ subbundle of $T^m$ mapped
isomorphically onto $T^{m-1}$ by truncation, and the resulting splitting of
\eqref{eq:transverse-jet-sequence} determines a flat rational connection
on $T^{m-1}$ extending $\partial^{m-1}$: the companion connection of the
transverse equation, see \cite[Lemma~5.1 and Theorem~5.2]{Fazoli2025a} and
\cite[Theorem~A]{Fazoli2025b}. In the frame of a local first integral $u$
it is the companion system of $D_{\Lambda,u}$, the transverse counterpart
of the classical theory of \cite[\S~I.4]{Deligne1970}. Its determinant is a
connection on $\det T^{m-1}\simeq(\CNF)^{\otimes q}$, the isomorphism
coming from \eqref{eq:transverse-jet-sequence}, with local connection form
$A_{m-1,u}(u)\,\dd u$, and $\nabla_{\mathrm{aff}}$ is its $q$-th root. The
local description through the subprincipal coefficient appears already in
Pirio's thesis \cite[\S~5.2.5]{Pirio2004}.
\end{remark}

\begin{corollary}\label{cor:rank-increase}
Let $X$ be a smooth projective variety, let $\W$ be a $k$-web, and let $\F$
be a foliation transversal to $\W$. If
\[
   \rank(\W\boxt\F)>\rank(\W),
\]
then $\F$ is transversely affine. Consequently, it admits a Liouvillian first
integral over $\C(X)$ in the sense of \Cref{def:global-liouvillian-fi}.
\end{corollary}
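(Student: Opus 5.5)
The plan is to reduce the corollary to \Cref{thm:abel-to-affine}: it suffices to show that the rank hypothesis forces $\Sol_{\F}(\W)_x\neq\C$ at a general regular point $x$ of $\W\boxt\F$. Liouvillianity of the first integral then comes from the elementary direction of \Cref{thm:singer-criterion}.

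First, fix a general point $x\in U_{\W\boxt\F}$, which is also a regular point of $\F$ and of $\W$. Locally order the factors as $\W\boxt\F=\F_1\boxt\cdots\boxt\F_k\boxt\F$. Then consider the projection of abelian relations onto the last component,
\[
   p: A(\W\boxt\F)_x\longrightarrow A_{\F}(\W)_x,\qquad (\eta_1,\ldots,\eta_k,\eta)\mapsto\eta .
\]
Its kernel consists of the tuples with $\eta=0$. These are exactly the tuples $(\eta_1,\ldots,\eta_k,0)$ with $(\eta_1,\ldots,\eta_k)\in A(\W)_x$. Hence $\ker p\simeq A(\W)_x$. Since $x$ is general, the dimensions of these stalks are the ranks of the corresponding local systems. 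Therefore
\[
   \dim A_{\F}(\W)_x=\rank(\W\boxt\F)-\rank(\W)>0 .
\]

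Second, I would use the exact sequence \eqref{eq:component-exact}. It gives $\dim\Sol_{\F}(\W)_x=1+\dim A_{\F}(\W)_x\geq2$, so $\Sol_{\F}(\W)_x\neq\C$. \Cref{thm:abel-to-affine} then says $\F$ is transversely affine. Concretely, there are a rational $\omega$ defining $\F$ and a closed rational $\eta$ with $\dd\omega=\omega\wedge\eta$. The construction of the first integral goes as follows:
\begin{itemize}
\item adjoin $E$ with $\dd E/E=\eta$, which is an exponential of a primitive;
\item observe that $E\omega$ is closed;
\item adjoin a primitive $a$ of $E\omega$.
\end{itemize}
This gives a Liouvillian extension of $\C(X)$ containing a nonconstant $a$ with $\dd a\wedge\omega=0$, which is a global Liouvillian first integral as in \Cref{def:global-liouvillian-fi}.

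The only delicate point is bookkeeping about the base point. The rank of $\W$ is computed on $U_{\W}$, and the rank of $\W\boxt\F$ on the smaller open set $U_{\W\boxt\F}$. Both are local systems, so their stalk dimensions are constant on these connected open sets. Choosing $x$ general in $U_{\W\boxt\F}\subset U_{\W}$ makes the dimension count valid. If needed, one can pass to a decomposing cover as in \Cref{prop:decomposable-pullback}; this does not affect the conclusion, because $\F$ is defined on $X$ and the statement of \Cref{thm:abel-to-affine} is already global.
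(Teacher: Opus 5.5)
Your proposal is correct and follows the paper's own proof essentially step for step. You identify the kernel of the surjection $A(\W\boxt\F)_x\to A_{\F}(\W)_x$ with $A(\W)_x$ to get $A_{\F}(\W)_x\neq0$, use \eqref{eq:component-exact} to obtain $\Sol_{\F}(\W)_x\neq\C$, and conclude via \Cref{thm:abel-to-affine} and the elementary direction of \Cref{thm:singer-criterion}.
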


\begin{proof}
The projection
\[
   A(\W\boxt\F)_x\rightarrow A_{\F}(\W)_x
\]
is surjective by definition, and its kernel is naturally identified with
$A(\W)_x$: an abelian relation of $\W\boxt\F$ with vanishing $\F$-component
is an abelian relation of $\W$. Hence
\[
   \rank(\W\boxt\F)=\rank(\W)+\dim_{\C}A_{\F}(\W)_x,
\]
and the rank inequality implies that $A_{\F}(\W)_x\neq0$. The exact sequence
\eqref{eq:component-exact} then gives $\Sol_{\F}(\W)_x\neq\C$, and $\F$ is
transversely affine by \Cref{thm:abel-to-affine}. The Liouvillian assertion
follows from the elementary implication of \Cref{thm:singer-criterion},
established right after its statement.
\end{proof}

\section{Foliations without rational first integrals}
\label{sec:no-rational-first-integrals}

\begin{theorem}
\label{thm:no-rfi}
Let $X$ be a smooth projective variety, let $\W$ be a
$k$-web, and let $\F$ be a foliation transversal to $\W$.
Suppose that $\F$ has no nonconstant rational first
integral and that $\Sol_{\F}(\W)\neq\C$. Then $\F$ is
transversely affine and every element of $\Sol_{\F}(\W)$
is defined over a Liouvillian extension of $\C(X)$.
\end{theorem}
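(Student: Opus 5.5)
Transverse affineness is already given by \Cref{thm:abel-to-affine}, so the real content is Liouvillianity. The plan is to use the transverse affine structure to produce a global transverse coordinate over a Liouvillian extension. In that coordinate, the normalized transverse equation of \Cref{lem:rational-transversality} should have \emph{constant} coefficients, because $\F$ has no rational first integral. Then every element of $\Sol_{\F}(\W)$ is a combination of exponential–polynomials in the coordinate, hence Liouvillian.

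First, fix a rational one-form $\omega$ defining $\F$ and a rational closed $\eta$ with $\dd\omega=\omega\wedge\eta$. Adjoin $E$ with $\dd E/E=\eta$, then a primitive $F$ of the closed form $E\omega$. Both are Liouvillian adjunctions, so $L=\C(X)(E,F)$ is Liouvillian and $F$ is a nonconstant first integral of $\F$ over $L$. Next, choose a rational vector field $w_0$ with $\omega(w_0)=1$ and set $w=E^{-1}w_0$, so that $\dd F(w)=1$. Then $w$ is an infinitesimal automorphism of $\F$ defined over $L$, generically transverse to $\F$: locally it is $\partial_F$ in foliated coordinates $(x,F)$, since $w(F)=1$ and $[w,v]$ is tangent to $\F$ because $\dd F$ is closed. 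Applying \Cref{lem:rational-transversality}(iii) yields the normalized operator $\mathcal D_w=w^m+\sum c_\ell w^\ell$, with each $c_\ell\in L$ a first integral of $\F$.

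The key step, and the expected main obstacle, is to show that the $c_\ell$ are constant. Compare with a rational normalization. The rational line $\C(X)\Lambda$ has symbol a rational section, so $\mathcal D_w=b_m^{-1}\Lambda$ with $b_m=\sigma(\Lambda)(\omega^{\otimes m})E^{-m}$, where $\sigma(\Lambda)(\omega^{\otimes m})$ is rational. Rewrite $w^\ell=E^{-\ell}\tilde w^\ell$ plus lower-order terms, where $\tilde w=w_0$ is rational and $w_0(E)=E\,\eta(w_0)$ is rational times $E$. Expanding this way, one finds $c_\ell=E^{m-\ell}r_\ell$ with $r_\ell\in\C(X)$. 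So each $c_\ell$ is a first integral of $\F$ of the form $E^{j}r$ with $j=m-\ell\ge1$ and $r$ rational, and we must show $r=0$ whenever $E$ is transcendental or $E^{j}\notin \C(X)$.

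To do this, suppose $E^j r$ is a nonzero first integral. Taking the logarithmic differential, $j\eta+\dd r/r$ vanishes on $T_\F$, so it equals $g\,\omega$ for some rational $g$. Replacing $\omega$ by $\omega'=r^{-1/j}\omega$ after an algebraic extension, we may assume $\eta|_{T_\F}=0$, so $\eta=g'\omega$. Closedness then gives that $\omega$ times a rational integrating factor is closed, yielding a transversely additive structure. In that situation $E$ itself becomes algebraic. Concretely, two distinct affine structures differ by $h\omega$ with $\dd(h\omega)=0$, and if $h\neq0$ then $h$ is a rational first integral once $\dd\omega=\omega\wedge\eta$ is used, which contradicts the hypothesis. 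The cleanest route should be to observe that $E^jr$ and $F$ are both first integrals over $L$, so they are functionally dependent. Differentiating along $w$ forces $E^jr=\phi(F)$ with $\phi'\cdot1=w(E^jr)$ expressed in $L$. A rational-first-integral-free argument, via Darboux-type uniqueness of transverse structures, then shows $\phi$ must be constant, so $c_\ell\in\C$.

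With constant coefficients, each $u\in\Sol_{\F}(\W)$ satisfies a constant-coefficient ODE in $F$. Hence $u=\sum P_i(F)e^{\lambda_iF}$, and each term is obtained from $L$ by adjoining exponentials of primitives $\lambda_i\,\dd F$. Therefore $u$ is Liouvillian.
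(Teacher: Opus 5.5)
Your setup is sound, and so is the weight computation, except that the exponent is reversed. Since $w=E^{-1}w_0$ and $w(E)=\eta(w_0)\in\C(X)$, one gets $c_\ell=E^{\ell-m}r_\ell$ with $r_\ell\in\C(X)$, so the $c_\ell$ already lie in $\C(X)(E)$.

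The gap is the key step itself: the $c_\ell$ need not be constant, and the justification you sketch is incorrect. From a nonconstant first integral $E^jr$ you correctly obtain a closed rational form $\gamma=j\eta+\dd r/r\neq0$ with $\gamma\wedge\omega=0$. This is not a contradiction: it only says that $\F$ is transversely additive, which is compatible with having no rational first integral. Such foliations carry many transverse affine structures ($\eta$ and $\eta+c\gamma$ for $c\in\C$), so there is no uniqueness to invoke and $E$ need not become algebraic. Moreover, since $\dd(h\omega)=(\dd h-h\eta)\wedge\omega$, it is $h/E$, not $h$, that is a first integral.

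Here is a concrete failure. On $\Pj^2$ let $\W$ be the two coordinate pencils, let $\F$ be defined by $\gamma=\dd x/x-\lambda\,\dd y/y$ with $\lambda\notin\mathbb Q$, and take the admissible structure $(\omega,\eta)=(\gamma,s\gamma)$ with $s\in\C^*$. With $t=\log x-\lambda\log y$ one has $\Sol_{\F}(\W)=\C\oplus\C t$, $E=e^{st}$ transcendental over $\C(X)$, and $F=e^{st}/s$. The normalized operator $\partial_t^2$ becomes $\partial_F^2+F^{-1}\partial_F$ in the coordinate $F$. So $c_1=sE^{-1}$ is a nonconstant first integral with $r_1=s\neq0$, and $t=s^{-1}\log(sF)$ is not an exponential polynomial in $F$. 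Adjoining $F$ to $L$ does not help either: it puts a nonconstant first integral into $L$ by construction, so any constancy argument has to be run over $\C(X)(E)$.

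The missing idea is the case distinction of \Cref{lem:K1-dichotomy}. When $\F$ has a nonconstant first integral in $\C(X)(E)$, you must discard $E$ and $F$ and renormalize with respect to a closed defining form, as in \Cref{prop:closed-extension-normal-form}. In your monomial setting this is even cheaper than in the paper:
\begin{itemize}
\item If some $c_\ell=E^{\ell-m}r_\ell$ is a nonconstant first integral, then $\gamma=(\ell-m)\eta+\dd r_\ell/r_\ell$ is a nonzero closed rational one-form defining $\F$. A rational $w'$ with $\gamma(w')=1$ is an infinitesimal automorphism, and \Cref{lem:rational-transversality}\textup{(iii)} with $L=\C(X)$ gives coefficients that are rational first integrals, hence constants. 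Every element of $\Sol_{\F}(\W)$ is then an exponential polynomial in a primitive of $\gamma$.
\item Otherwise every $c_\ell$ is constant, and your last paragraph applies verbatim.
\end{itemize}
With this case split, your argument becomes a correct and slightly more economical variant of the paper's proof for this theorem. It avoids the partial-fraction analysis and the algebraic extension in \Cref{lem:K1-dichotomy}. Without it, the proof does not go through.
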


Transverse affineness follows from \Cref{thm:abel-to-affine}.
To prove the assertion about the components, we first
record an elementary fact.

\begin{lemma}
\label{lem:no-new-first-integrals}
Let $K=\C(X)$, let $K'/K$ be a differential field extension,
and let $K''/K'$ be a finite algebraic extension.
If $\F$ has no nonconstant first integral in $K'$, then
it has no nonconstant first integral in $K''$.
\end{lemma}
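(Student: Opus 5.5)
The plan is to take a first integral $a\in K''$ of $\F$ and produce, from its minimal polynomial over $K'$, a first integral lying in $K'$ itself. Fix a rational one-form $\omega$ defining $\F$, or equivalently a $K$-basis $v_1,\ldots,v_{n-1}$ of rational vector fields tangent to $\F$. The derivations $v_a$ extend uniquely from $K'$ to $K''$, because $K''/K'$ is finite algebraic in characteristic zero. A first integral in $K''$ is then an element killed by every $v_a$.

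Suppose $a\in K''$ is a nonconstant first integral, and let
\[
   P(T)=T^d+p_{d-1}T^{d-1}+\cdots+p_0
\]
be its minimal polynomial over $K'$. Applying $v_a$ to $P(a)=0$ gives
\[
   P^{v_a}(a)+P'(a)\,v_a(a)=0,
\]
where $P^{v_a}$ denotes $P$ with $v_a$ applied to its coefficients. Since $v_a(a)=0$, this reduces to $\sum_j v_a(p_j)\,a^j=0$. This polynomial has degree less than $d$, so minimality forces $v_a(p_j)=0$ for all $j$ and all $a$. Hence every coefficient $p_j$ is a first integral of $\F$ lying in $K'$. By hypothesis, each $p_j$ is therefore constant, that is, $p_j\in K'^{\Delta}$.

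It remains to check that $a$ is constant. By Remark~\ref{rmk:constants-automatic}, we have $K'^{\Delta}=\C$, so $P\in\C[T]$. Since $\C$ is algebraically closed, $a$ is algebraic over $\C$ and hence $a\in\C$, contradicting the assumption that $a$ is nonconstant.

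The only step requiring care is this last one. If the constants of $K'$ were not $\C$, I would instead argue that $a$ is algebraic over the field of $\F$-first integrals, which is $K'^{\Delta}$ under the hypothesis. Within the paper's Convention~\ref{conv:differential-function-field} this issue does not arise, because $K'^{\Delta}=\C$ there. I also expect the phrase ``nonconstant first integral in $K'$'' to mean an element not in $\C$, which is consistent with this reading.
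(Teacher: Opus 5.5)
Your proposal is correct and follows the paper's own proof: differentiate the monic minimal polynomial of the first integral along the tangent derivations, use minimality to see that the coefficients are first integrals in $K'$, hence lie in $\C$, and conclude from algebraic closedness of $\C$. The detour through $K'^{\Delta}$ is unnecessary, since the hypothesis already places the coefficients in $\C$. You should also rename either the first integral $a$ or the index $a$ in $v_a$, since as written the two clash.
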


\begin{proof}
Let $h\in K''$ be a first integral of $\F$.
For every rational vector field $\delta$ tangent to $\F$, acting on $K'$
and $K''$, we have $\delta(h)=0$. Differentiate the monic minimal
polynomial of $h$ over $K'$ with respect to $\delta$. The resulting
polynomial has smaller degree and vanishes at $h$, so it is zero.
Thus every coefficient of the minimal polynomial is annihilated by
every tangent derivation, and is a first integral in $K'$.
By hypothesis these coefficients belong to $\C$. Since $\C$ is
algebraically closed, $h\in\C$.
\end{proof}

\subsection{Closed defining forms}

\begin{proposition}
\label{prop:closed-extension-normal-form}
Let $X$ be a smooth projective variety, let $\W$ be a
$k$-web, and let $\F$ be a foliation transversal to $\W$.
Set $K=\C(X)$ and let $L/K$ be a Liouvillian extension.
Assume that $\F$ is defined by a nonzero closed one-form
$\alpha$ over $L$ and has no nonconstant first integral
in $L$. If $\Sol_{\F}(\W)\neq\C$, then there are
$2\leq m\leq k(k+1)/2$ and constants
$c_1,\ldots,c_{m-1}\in\C$ such that
\[
   \Sol_{\F}(\W)\subset
   \ker\left(
      \partial_t^m+c_{m-1}\partial_t^{m-1}
      +\cdots+c_1\partial_t
   \right),
\]
where $t$ is a foliated coordinate with $\dd t=\alpha$.
Consequently, every element of $\Sol_{\F}(\W)$ has the form
\[
   G(t)=\sum_{j=1}^r P_j(t)e^{\mu_jt},
   \qquad P_j\in\C[t],
\]
where the $\mu_j$ are the distinct roots of the
characteristic polynomial and $\deg P_j$ is smaller
than the multiplicity of $\mu_j$. In particular,
\[
   \Sol_{\F}(\W)\subset
   L\bigl(t,e^{\mu_1t},\ldots,e^{\mu_rt}\bigr),
\]
and this extension is Liouvillian over $K$.
\end{proposition}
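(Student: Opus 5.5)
The plan is to use the closed form $\alpha$ as an infinitesimal-automorphism frame and apply \Cref{lem:rational-transversality}\textup{(iii)}.

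\textbf{Frame from $\alpha$.} Since $\alpha$ is closed, defined over $L$, and defines $\F$, locally $\alpha=\dd t$ for a foliated coordinate $t$. Let $w$ be the vector field over $L$ with $\alpha(w)=1$ and $w$ tangent to the levels of any chosen complementary frame. Then $w$ acts on first integrals as $\partial_t$. Because $\dd\alpha=0$, the class of $w$ in $N_{\F}$ is Bott-flat, so $w$ is an infinitesimal automorphism, generically transverse to $\F$ and defined over $L$. One may need to adjoin finitely many algebraic elements to write $w$ over $L$. If so, the hypothesis of no new first integrals is preserved by \Cref{lem:no-new-first-integrals}, and Liouvillianity by \Cref{lem:liouvillian-correspondence}.

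\textbf{Constant coefficients.} \Cref{lem:rational-transversality}, applicable because $\Sol_{\F}(\W)\neq\C$, gives a minimal order $m$ with $2\le m\le N_0=k(k+1)/2$. By part \textup{(iii)}, the normalized operator is
\[
\mathcal D_w=w^m+\sum_{\ell=1}^{m-1}c_\ell w^\ell ,
\]
with each $c_\ell$ a first integral of $\F$ lying in $L$. By hypothesis these first integrals are constant, so $c_\ell\in\C$. Every $u\in\Sol_{\F}(\W)$ satisfies $\Lambda(j^m u)=0$, hence $\mathcal D_w u=0$. Writing $u=G(t)$ and using $w^\ell(G\circ t)=G^{(\ell)}\circ t$, we get
\[
G^{(m)}+c_{m-1}G^{(m-1)}+\cdots+c_1G'=0 .
\]

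\textbf{Solving the ODE.} This is a constant-coefficient linear ODE, so $G$ has the stated form: its solution space is spanned by the functions $t^je^{\mu t}$, where $\mu$ is a root of the characteristic polynomial and $j$ is less than its multiplicity. Hence $G\in L(t,e^{\mu_1t},\dots,e^{\mu_rt})$.

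\textbf{Liouvillian tower.} The extension is built in steps. Adjoining $t$ is adjoining a primitive, since $\dd t=\alpha\in\Omega^1(L)$. Each $e^{\mu_jt}$ is an exponential of a primitive, since its logarithmic differential is $\mu_j\alpha\in\Omega^1(L)$. Stacking these steps on a tower for $L/K$ gives a Liouvillian extension of $K$. The constants condition is automatic by \Cref{rmk:constants-automatic}.

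\textbf{Main obstacle.} The delicate point is justifying \Cref{lem:rational-transversality}\textup{(iii)} over $L$ rather than over $\C(X)$. We need $\Lambda$ rewritten in the $w$-frame over $L$, and we need the tangent derivations extended to $L$ compatibly with the flat partial connection, so that "first integral in $L$" matches the hypothesis. I would check this directly through equation \eqref{eq:connection-transverse-diff-operator}, which remains valid over $L$ by the Leibniz rule.
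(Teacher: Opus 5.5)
Your proposal is correct and follows the paper's proof essentially step by step. You take a vector field $w$ over $L$ with $\alpha(w)=1$, which is an infinitesimal automorphism because $\alpha$ is closed; part (iii) of \Cref{lem:rational-transversality} then makes the normalized coefficients first integrals in $L$, hence constants; and the resulting constant-coefficient equation in $t$ is solved by exponential polynomials obtained through primitive and exponential adjunctions over $L$. Your hedge about adjoining algebraic elements is unnecessary: writing $\alpha=\sum_i a_i\,\dd x_i$ with some $a_i\neq0$ in $L$, the field $w=a_i^{-1}\partial_{x_i}$ is already defined over $L$.
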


\begin{proof}
Choose a vector field $w$ over $L$ with $\alpha(w)=1$.
Since $\alpha$ is closed, Cartan's formula gives
$\mathcal L_w\alpha=0$, so $w$ is an infinitesimal
automorphism of $\F$. By
\Cref{lem:rational-transversality}\textup{(iii)}, the
normalized minimal annihilator is
\[
   \mathcal D_w
   =w^m+c_{m-1}w^{m-1}+\cdots+c_1w,
\]
where the $c_\ell\in L$ are first integrals of $\F$.
Thus $c_\ell\in\C$.

As $w(t)=1$, the action of $w$ on local first integrals
is $\partial_t$. This gives the stated differential
equation, whose solutions are the exponential polynomials
above. Finally, adjoining $t$ is a primitive extension,
and
\[
   \frac{\dd e^{\mu_jt}}{e^{\mu_jt}}=\mu_j\alpha,
\]
so adjoining the exponentials gives a Liouvillian extension.
\end{proof}

\subsection{The general case}

Let $(\omega,\eta)$ be a rational transverse affine
structure for $\F$, so that
\[
   \dd\omega=\omega\wedge\eta,
   \qquad \dd\eta=0.
\]
Set $K=\C(X)$ and choose a nonzero germ
$E=\exp(\int\eta)$. Then $E\omega$ is closed and is
defined over the Liouvillian extension $K_1=K(E)$.

\begin{lemma}
\label{lem:K1-dichotomy}
If $K_1$ contains a nonconstant first integral of $\F$,
then either
\begin{enumerate}[label=\textup{(\roman*)}]
\item $\F$ has a nonconstant rational first integral; or
\item $\F$ is defined by a nonzero closed one-form over
a finite algebraic extension of $K$.
\end{enumerate}
\end{lemma}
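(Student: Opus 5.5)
The plan is to split according to whether $E$ is algebraic or transcendental over $K$. If $E$ is algebraic over $K$, then $K_1=K(E)$ is a finite algebraic extension of $K$. Since $\dd(E\omega)=E(\eta\wedge\omega+\dd\omega)=0$, the nonzero one-form $E\omega$ is a closed form over $K_1$ defining $\F$. This gives alternative \textup{(ii)} directly, and the hypothesis on first integrals is not even needed in this case.

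Now assume $E$ is transcendental over $K$, so that $K_1\simeq K(E)$ is a rational function field in one variable. The key observation is that every rational vector field $\delta$ tangent to $\F$ acts on $K[E]$ preserving degree in $E$:
\[
   \delta(aE^j)=\bigl(\delta(a)+j\,\eta(\delta)\,a\bigr)E^j .
\]
Let $h\in K_1$ be a nonconstant first integral. Write $h=P(E)/Q(E)$ with $P,Q\in K[E]$ coprime and $Q$ monic of degree $d$. From $\delta(h)=0$ we get $\delta(P)\,Q=P\,\delta(Q)$. Coprimality then gives $Q\mid\delta(Q)$. Since $\deg\delta(Q)\le d$, we have $\delta(Q)=\lambda_\delta Q$ for some $\lambda_\delta\in K$. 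Comparing leading coefficients gives $\lambda_\delta=d\,\eta(\delta)$, and the equation $\delta(P)Q=P\delta(Q)$ then gives $\delta(P)=\lambda_\delta P$. Comparing coefficients degree by degree, every coefficient $a_j$ of $P$ and $b_j$ of $Q$ satisfies
\[
   \delta(a_j)+(j-d)\,\eta(\delta)\,a_j=0.
\]
That is, each $a_jE^{j-d}$ and each $b_jE^{j-d}$ is a first integral of $\F$ in $K_1$.

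The remaining step is the case analysis, which I expect to be the only delicate point. There are two possibilities.
\begin{enumerate}[label=\textup{(\alph*)}]
\item Suppose all nonzero coefficients of $P$ and $Q$ occur in degree $j=d$. Then $h=a_d/b_d\in K$ is a nonconstant rational first integral, which is alternative \textup{(i)}.
\item Otherwise there is a nonzero coefficient $b\in K$ in some degree $j\ne d$. Set $s=j-d\neq0$. Then $g=bE^{s}$ is a first integral, and it is nonconstant: if $g=c$ were constant, then $E^{s}=c/b\in K$, contradicting the transcendence of $E$.
\end{enumerate}
In case (b), the logarithmic differential
\[
   \frac{\dd g}{g}=\frac{\dd b}{b}+s\,\eta
\]
is a rational closed one-form. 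It is nonzero because $g$ is nonconstant, and it satisfies $\frac{\dd g}{g}\wedge\omega=0$ because $g$ is a first integral. Hence it defines $\F$, which is alternative \textup{(ii)} over $K$ itself.

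The point to verify carefully in case (b) is that $g$ is genuinely nonconstant. This is exactly where the transcendence of $E$ is used: it rules out $bE^{s}\in\C$ for $s\neq0$. The other care point is the normalization of $Q$ as monic, which is what makes the leading-coefficient comparison $\lambda_\delta=d\,\eta(\delta)$ valid.
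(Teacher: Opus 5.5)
Your proof is correct, but in the transcendental case it takes a different route from the paper's; the algebraic case is handled the same way, via the closed form $E\omega$ over $K(E)$.

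For transcendental $E$, the paper expands $h$ in partial fractions over $\overline{K}(E)$. A nonzero pole $e_i\in\overline{K}^*$ must satisfy $\delta(e_i)=\eta(\delta)e_i$, which gives the closed form $\eta-\dd e_i/e_i$ over $K(e_i)$. In the Laurent-polynomial case the paper uses $\dd a_j/a_j+j\eta$ with $a_j\in\overline{K}$. It also treats separately the case where $\eta$ vanishes on $T_\F$, and the subcases where these closed forms vanish.

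You instead stay in $K[E]$ and note that tangent derivations preserve the $E$-grading. Unique factorization then gives $Q\mid\delta(Q)$, hence $\delta(Q)=d\,\eta(\delta)\,Q$ and $\delta(P)=d\,\eta(\delta)\,P$. So every homogeneous piece $a_jE^{j-d}$, $b_jE^{j-d}$ is itself a first integral. This avoids algebraic closures and partial fractions altogether. It also gives a sharper conclusion: when $E$ is transcendental, alternative \textup{(ii)} holds over $K$ itself, so the only finite extension ever needed is $K(E)$ in the algebraic case.

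Your nonconstancy argument also shows that the paper's subcases $\beta=0$ and $\gamma=0$ cannot occur once $E$ is transcendental, since either would make $E$ algebraic over $K$. The paper's pole-by-pole analysis is the classical partial-fraction argument familiar from Liouville--Rosenlicht theory, and it exhibits the algebraic semi-invariants $e_i$ explicitly. For this lemma, though, your graded argument is more economical.
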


\begin{proof}
For every rational vector field $\delta$ tangent to $\F$, set
$r_\delta=\eta(\delta)$, so that $\delta(E)=r_\delta E$.
If $r_\delta=0$ for every tangent $\delta$, then
$\eta\wedge\omega=0$ and $\omega$ is closed.
If $E$ is algebraic over $K$, then $E\omega$ gives
\textup{(ii)}. We may therefore assume that $r_\delta\neq0$
for some tangent $\delta$ and that $E$ is transcendental over $K$.

Let $h\in K(E)$ be a nonconstant first integral.
Extend every tangent derivation to $\overline K(E)$, and fix
the partial-fraction expansion
\[
   h=\sum_{j=-q}^{d}a_jE^j+
   \sum_{i=1}^{N}\sum_{s=1}^{m_i}
      \frac{b_{i,s}}{(E-e_i)^s},
\]
where $a_j,b_{i,s}\in\overline K$, the
$e_i\in\overline K^*$ are distinct, and
$b_{i,m_i}\neq0$.

If a nonzero pole $e_i$ occurs, apply each tangent derivation
$\delta$ to this expansion. The coefficient of
$(E-e_i)^{-(m_i+1)}$ in $\delta(h)=0$ gives
$\delta(e_i)=r_\delta e_i$ for every such $\delta$. Hence
\[
   \beta=\eta-\frac{\dd e_i}{e_i}
\]
is closed and annihilates all of $T_{\F}$, so
$\beta\wedge\omega=0$.
If $\beta\neq0$, it defines $\F$ over $K(e_i)$.
If $\beta=0$, then $e_i\omega$ is a nonzero closed
one-form defining $\F$.

If there are no nonzero poles, then
$h=\sum_{j=-q}^{d}a_jE^j$, and applying every tangent
$\delta$ gives
\[
   \delta(a_j)+j r_\delta a_j=0
   \qquad\text{for every tangent }\delta.
\]
Suppose that $a_j\neq0$ for some $j\neq0$. The closed form
\[
   \gamma=\frac{\dd a_j}{a_j}+j\eta
\]
annihilates all of $T_{\F}$, so $\gamma\wedge\omega=0$. If $\gamma\neq0$,
it gives \textup{(ii)}. Otherwise, choose $b$ with
$b^j=a_j^{-1}$. Then $\dd b/b=\eta$, so $b\omega$
is a nonzero closed defining form over a finite
extension of $K$.

The remaining case is $h=a_0$. Since $E$ is transcendental,
$\overline K\cap K(E)=K$, so $h\in K$, proving \textup{(i)}.
\end{proof}

\begin{proof}[Proof of \Cref{thm:no-rfi}]
By \Cref{thm:abel-to-affine}, $\F$ admits a rational
transverse affine structure $(\omega,\eta)$. With the
notation above, $K_1/K$ is Liouvillian and
$\alpha=E\omega$ is a nonzero closed defining form.

If $\F$ has no nonconstant first integral in $K_1$,
apply \Cref{prop:closed-extension-normal-form} with
$L=K_1$ and $\alpha=E\omega$.

Otherwise, \Cref{lem:K1-dichotomy} and the hypothesis
on $\F$ give a nonzero closed defining form over a
finite algebraic extension $K'/K$. By
\Cref{lem:no-new-first-integrals}, $\F$ still has no
nonconstant first integral in $K'$. Since $K'/K$ is
Liouvillian, \Cref{prop:closed-extension-normal-form}
applies with $L=K'$. 

In both cases, all elements of
$\Sol_{\F}(\W)$ belong to a Liouvillian extension of $K$.
\end{proof}
\section{Differential modules generated by abelian relations}
\label{sec:component-modules}

We associate a differential module to each component of a
functional abelian relation and study its simple factors
when the corresponding foliation has no nonconstant rational
first integral.

\subsection{Differential modules} We start by defining differential modules over a differential field and its composition series.

\begin{definition}
\label{dfn:differential-module}
An \emph{integrable differential module} over $K$ is a
finite-dimensional $K$-vector space $M$ with a flat
connection $\nabla$. All differential modules considered
below are integrable. We usually omit $\nabla$ and denote
the action of $\partial\in\Delta$ on $m\in M$ by
$\partial(m)$. Morphisms, differential submodules,
quotients, and finite direct sums are defined in the
usual way.
\end{definition}

\begin{definition}
A \emph{composition series} for a differential module $M$
is a flag of differential submodules
\[
   0=M_0\subset M_1\subset\cdots\subset M_r=M
\]
such that each quotient $M_i/M_{i-1}$ is simple.
These quotients are called the \emph{simple factors}
of $M$.
\end{definition}

By the Jordan--H\"older theorem, the multiset of simple
factors is independent, up to isomorphism, of the
composition series; see
\cite[Chapter~2]{vanDerPutSinger2003}.
For a short exact sequence
\[
   0\longrightarrow M'\longrightarrow M
   \longrightarrow M''\longrightarrow0,
\]
the simple factors of $M$ are those of $M'$ and $M''$,
with multiplicities. In particular, the simple factors
of a submodule or quotient occur among those of $M$,
and the simple factors of a finite direct sum are those
of its summands.

\begin{definition}
A differential module $M$ \emph{becomes trivial} over a
differential extension $K'/K$ if $K'\otimes_K M$ has a
horizontal basis.
\end{definition}

\subsection{The differential module generated by a component}

Set $K=\C(X)$ and let $u$ be an element of a differential
extension of $K$ inside $\mathcal M_{X,x}$.
We define $E(u)$ to be the smallest $K$-vector subspace
containing $u$ and stable under all rational derivations.
For the commuting derivations $\partial_1,\ldots,\partial_n$
determined by a transcendence basis of $K$, we have
\begin{equation}
\label{eq:def-component-module}
   E(u)=\operatorname{Span}_K
   \left\{\partial_1^{a_1}\cdots\partial_n^{a_n}(u)
   \mid a_1,\ldots,a_n\geq0\right\}.
\end{equation}
When finite-dimensional, $E(u)$ is the smallest
differential $K$-module containing $u$.

\begin{proposition}
\label{prop:finite-component-module-main}
Let $\W$ be a $k$-web on a smooth projective variety,
and let $u$ be a component of a functional abelian
relation of $\W$. Then $E(u)$ is finite-dimensional
over $K$.
\end{proposition}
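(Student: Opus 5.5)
The plan is to bound $E(u)$ inside the $K$-span of finitely many transverse derivatives of $u$, using the rational annihilator of minimal order from \Cref{lem:rational-transversality}.

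\emph{Setup.} First pass to a decomposing cover $\pi:X'\to X$ (\Cref{prop:decomposable-pullback}) with $K'=\C(X')$ finite over $K$. If $E'(u)$ denotes the analogous space over $K'$, then $E(u)\subset E'(u)$. Indeed, rational derivations of $K$ are $K'$-combinations of those of $K'$, so $E'(u)$ is a $K$-subspace containing $u$ and stable under the derivations of $K$. Moreover $\dim_K E'(u)=[K':K]\dim_{K'}E'(u)$. It therefore suffices to work on $X'$, and we drop the primes. Write $\W=\F_1\boxt\cdots\boxt\F_k$ with $u=u_i$. Set $\F=\F_i$ and $\W'=\boxtimes_{j\neq i}\F_j$, so that $u\in\Sol_{\F}(\W')$. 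If $\Sol_{\F}(\W')=\C$, then $u$ is constant and $E(u)\subset K$. Otherwise \Cref{lem:rational-transversality} gives a minimal rational annihilator $\Lambda$ of order $m\le N_0$, with rational coefficients over $K$.

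\emph{Key steps.} Fix a rational vector field $w$ on $X$ generically transverse to $\F$. We do not need $w$ to be an infinitesimal automorphism. For each $\ell\ge0$, the operator $f\mapsto w^\ell(f)$ restricted to $\OO_{X/\F}$ is a rational section of $(T^N)^\vee$ of order $\ell$. Its symbol is the $\ell$-th power of the rational function $\omega(w)$, which is generically nonzero; here $\omega$ is a rational form defining $\F$. Hence these operators, $0\le\ell\le N$, are triangular with respect to the truncation filtration \eqref{eq:transverse-jet-sequence} and form a $K$-basis of $(T^N)^\vee\otimes K$.

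Next, any monomial $\partial_1^{a_1}\cdots\partial_n^{a_n}$ of total order $\le N$ is a rational differential operator, i.e.\ a rational section of $(\jet^N)^\vee$. Its restriction to $T^N$ is therefore a $K$-combination of the $w^\ell$. Applying this to $u\in\OO_{X/\F}$ gives
\[
E(u)\subset V:=\sum_{\ell\ge0}K\,w^\ell(u).
\]

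Finally, write $\Lambda=\sum_{\ell\le m}b_\ell w^\ell$ in this basis, with $b_\ell\in K$ and $b_m\neq0$. Since $\Lambda(j^mu)=0$, we get $w^m(u)\in\sum_{\ell<m}K\,w^\ell(u)$. Now argue by induction: assume $w^{p}(u)\in V_m:=\sum_{\ell<m}Kw^\ell(u)$, say $w^p(u)=\sum_{\ell<m}d_\ell\,w^\ell(u)$. Applying $w$ gives
\[
w^{p+1}(u)=\sum_{\ell<m}\bigl(w(d_\ell)\,w^\ell(u)+d_\ell\,w^{\ell+1}(u)\bigr)\in V_m+K\,w^m(u)=V_m.
\]
Hence $V=V_m$, so $\dim_K E(u)\le m\le k(k+1)/2$ on the cover.

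\emph{Main obstacle.} The delicate point is the identification in the key steps: we need that restricting an arbitrary rational differential operator to first integrals yields a \emph{rational} combination of the $w^\ell$, with no meromorphic non-rational coefficients appearing. This is where the rational frame given by powers of a rational transverse vector field, rather than a foliated coordinate, is essential. It should follow from the fact that $T^N\subset\jet^N$ is a rational subbundle cut out by the rational prolongations $v_a^{(N-1)}$, so that restriction $(\jet^N)^\vee\to(T^N)^\vee$ is defined over $K$.
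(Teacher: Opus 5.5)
Your proof is correct. Its skeleton matches the paper's: pass to a decomposing cover, take a nonzero rational annihilator, expand it in powers of a rational vector field $w$ transverse to $\F$ (the triangular change of frame), and use the resulting recurrence to see that $V_m=\sum_{\ell<m}K\,w^\ell(u)$ is $w$-stable.

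The difference lies in how you handle the remaining derivations.

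\textbf{The paper's route.} It completes $w$ to a $K$-basis $v_1,\ldots,v_{n-1},w$ of $\Der_{\C}(K)$ with the $v_a$ tangent to $\F$. It writes $[v_a,w]=\sum_b a_{ab}v_b+b_aw$ and proves by induction on $r$ that $v_a(w^ru)$ lies in the $K$-span of $u,\ldots,w^ru$. Hence $V_m$ is itself a differential module containing $u$, and $V_m=E(u)$.

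\textbf{Your route.} You argue that every rational differential operator of order at most $N$, restricted to first integrals, is a $K$-combination of $w^0,\ldots,w^N$. The reason is that $T^N$ is a $K$-rational subbundle of $\jet^N$, cut out by the rational prolongations, and the restrictions of the $w^\ell$ form a $K$-basis of $(T^N)^\vee\otimes K$. This gives $E(u)\subset\sum_\ell K\,w^\ell(u)=V_m$ directly, without ever checking stability under the tangent fields.

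\textbf{Comparison.} Your argument is more intrinsic and makes the mechanism transparent: on first integrals, every rational operator acts as a rational ordinary operator in $w$. The cost is that it relies on two things. First, the rationality of $T^N$ and of the restriction map $(\jet^N)^\vee\otimes K\to(T^N)^\vee\otimes K$. Second, passing from an identity at the generic point to an identity of germs at $x$, which is the same argument used for \eqref{eq:inclusion-rational-annihilator}.

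The paper's argument is elementary and self-contained. In fact, its bracket induction is a hands-on proof of your restriction statement. It rewrites $v_aw^r$ with the tangent fields moved to the right, where they annihilate first integrals. Equivalently, you could justify your key step by expanding any monomial in the ordered $K$-basis $w^\ell v^\alpha$ of differential operators.
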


\begin{proof}
Let $\pi:X'\to X$ be a decomposing cover and set
$K'=\C(X')$. Since $E_K(u)\subset E_{K'}(u)$ and $K'/K$
is finite, it suffices to prove that $E_{K'}(u)$ is
finite-dimensional over $K'$. We may therefore replace
$K$ by $K'$ and assume that $\W$ is decomposable and
that $u$ is a component along a foliation $\F$.

If $u$ is constant, then $E(u)=K\cdot u$.
Otherwise, \Cref{lem:rational-existence}, applied to
$\F$ and the $(k-1)$-web formed by the remaining factors,
gives a nonzero rational annihilator $\Lambda$ with
$\Lambda(j^N u)=0$.

Choose a $K$-basis $v_1,\ldots,v_{n-1},w$ of $\Der_{\C}(K)$,
with $v_1,\ldots,v_{n-1}$ tangent to $\F$ and $w$ transverse to $\F$. At a general point, the map
\[
   j^N f\longmapsto(f,w(f),\ldots,w^N(f))
\]
is related to the frame of a foliated coordinate by a
triangular change of frame with nonzero diagonal.
Thus $\Lambda$ can be written in powers of $w$ with
coefficients in $K$, and its equation gives
\begin{equation}
\label{eq:component-recurrence-main}
   w^m(u)=a_{m-1}w^{m-1}(u)+\cdots+a_0u,
   \qquad a_i\in K.
\end{equation}
It follows that
\[
   V=\operatorname{Span}_K
   \{u,w(u),\ldots,w^{m-1}(u)\}
\]
is stable under $w$.
Write
\[
   [v_a,w]=\sum_{b=1}^{n-1}a_{ab}v_b+b_a w,
   \qquad a_{ab},b_a\in K.
\]
Starting from $v_a(u)=0$ and using
\[
   v_a(w^{r+1}u)=w\bigl(v_a(w^r u)\bigr)+[v_a,w](w^r u),
\]
a simultaneous induction for all $a$ gives
\[
   v_a(w^r u)\in\operatorname{Span}_K\{u,w(u),\ldots,w^r(u)\}.
\]
Thus $V$ is stable under every $v_a$ and under $w$.
Hence $V$ is a differential module containing $u$,
and therefore $V=E(u)$.
\end{proof}

\subsection{The case without rational first integrals}

We first record a consequence of the Galois isomorphism
$L\otimes_K L\simeq\prod_{\sigma\in G}L$; see
\cite[Chapter~V, \S10]{BourbakiAlgebraII}.
Geometrically, for a Galois covering $\pi:X'\to X$,
it corresponds to the identity
\[
   \pi^*\pi_*\mathcal V
   \simeq\bigoplus_{\sigma\in G}\sigma^*\mathcal V
\]
for flat connections over the locus where $\pi$ is
\'etale. 

\begin{lemma}
\label{lem:restriction-rank-one-flags}
Let $L/K$ be a finite Galois extension of differential
fields. Suppose that a differential $K$-module $M$
embeds in a differential $L$-module $V$ having a flag
with rank-one simple factors. Then $L\otimes_K M$
has a flag with rank-one simple factors.
\end{lemma}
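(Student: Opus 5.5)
The plan is to combine the Galois isomorphism $L\otimes_K L\simeq\prod_{\sigma\in G}L$ with the fact that rank-one-flag modules are closed under submodules and direct sums.

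\emph{Step 1: embed $L\otimes_K M$ into a sum of Galois twists of $V$.} Let $\iota:M\hookrightarrow V$ be the given $K$-linear horizontal embedding. Extend it $L$-linearly to
\[
   L\otimes_K M\longrightarrow L\otimes_K V ,
\]
where $V$ is regarded as a $K$-module by restriction of scalars. This map is injective, since $L$ is flat over $K$. It is horizontal, since the derivations on $L$ extend those on $K$.

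Now decompose $L\otimes_K V\simeq (L\otimes_K L)\otimes_L V$. The Galois isomorphism $L\otimes_K L\simeq\prod_{\sigma\in G}L$, given by $a\otimes b\mapsto(a\sigma(b))_\sigma$, is an isomorphism of differential rings: each $\sigma$ commutes with the derivations, because their extensions from $K$ to the algebraic extension $L$ are unique. Under it,
\[
   L\otimes_K V\simeq\bigoplus_{\sigma\in G}V^\sigma ,
\]
where $V^\sigma=L\otimes_{\sigma,L}V$ is the twist of $V$ by $\sigma$. Concretely, $V^\sigma$ has the same underlying additive group and connection as $V$, with the $L$-action twisted by $\sigma$. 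I would check that the identification is compatible with the connections, which amounts to the Leibniz rule applied componentwise.

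\emph{Step 2: each twist has a rank-one flag.} Twisting by $\sigma$ carries differential $L$-submodules of $V$ to differential submodules of $V^\sigma$, and rank is preserved. So the given flag of $V$ with rank-one simple factors becomes such a flag of $V^\sigma$. A finite direct sum of modules with rank-one flags again has one: concatenate the flags, placing the flag of each summand on top of the full preceding summands.

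\emph{Step 3: pass to the submodule.} It remains to show that a differential submodule $N$ of a module $W$ with a rank-one flag $0=W_0\subset\cdots\subset W_r=W$ also has such a flag. Set $N_i=N\cap W_i$. Then $N_i/N_{i-1}$ embeds horizontally in $W_i/W_{i-1}$, which has rank one, so each quotient is zero or of rank one. Dropping the repeated terms gives the desired flag. Applying this with $N=L\otimes_K M$ and $W=\bigoplus_{\sigma\in G}V^\sigma$ proves the lemma.

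The main obstacle is Step 1: one has to set up the differential-module structure on $L\otimes_K V$ carefully, and verify that the Galois isomorphism is horizontal. The rest is formal.
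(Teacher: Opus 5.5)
Your proposal is correct and follows essentially the same route as the paper. Both arguments embed $L\otimes_K M$ into $L\otimes_K V\simeq\bigoplus_{\sigma}V^\sigma$, observe that each twist, and hence the direct sum, carries a rank-one flag, and intersect that flag with the submodule. The only difference is how horizontality of the decomposition is justified: you note that each $\sigma$ commutes with the uniquely extended derivations, whereas the paper differentiates $e^2=e$ to show that the idempotents of $L\otimes_K L$ are horizontal.
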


\begin{proof}
Set $G=\operatorname{Gal}(L/K)$ and regard $V$ as a
differential $K$-module by restriction of scalars.
The idempotents of
$L\otimes_K L\simeq\prod_{\sigma\in G}L$
are horizontal: differentiating $e^2=e$ gives
$(1-2e)\partial(e)=0$, and $1-2e$ is a unit. Hence
\[
   L\otimes_K V
   \simeq\bigoplus_{\sigma\in G}{}^\sigma V
\]
as differential $L$-modules, where ${}^\sigma V$
denotes $V$ with scalar multiplication twisted by
$\sigma$. Each summand has a flag with rank-one
quotients, so the direct sum has a composition series
with rank-one factors. Intersecting this series with
$L\otimes_K M$ and omitting repetitions proves the
claim.
\end{proof}

\begin{lemma}
\label{lem:exp-monomial-independence}
Let $L\subset\mathcal M_{X,x}$ be a differential extension
of $K=\C(X)$ in which $\F$ has no nonconstant first integral.
Let $t$ be a nonconstant holomorphic first integral of $\F$
at $x$. Then
\[
   \bigl\{t^a e^{\nu t}\mid a\in\N,\ \nu\in\C\bigr\}
\]
is linearly independent over $L$.
\end{lemma}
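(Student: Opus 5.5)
The plan is to take a linear relation over $L$ with as few terms as possible, use the derivations tangent to $\F$ to show that its coefficients are first integrals of $\F$ lying in $L$, hence constants, and then invoke the classical $\C$-linear independence of exponential monomials in one variable.

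Suppose the family is linearly dependent over $L$. Among all nontrivial relations
\[
   \sum_{(a,\nu)\in S} c_{a,\nu}\, t^a e^{\nu t}=0,
   \qquad c_{a,\nu}\in L^*,
\]
with $S\subset\N\times\C$ finite and nonempty, choose one with $|S|$ minimal. Divide by one coefficient, so that $c_{a_0,\nu_0}=1$ for some $(a_0,\nu_0)\in S$.

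Let $\delta$ be any rational vector field tangent to $\F$. It acts as a derivation on $\mathcal M_{X,x}$, and by \Cref{conv:differential-function-field} it preserves $L$. Since $t$ is a first integral, $\delta(t)=0$, so $\delta\bigl(t^a e^{\nu t}\bigr)=0$ for every $(a,\nu)$. Applying $\delta$ to the relation therefore gives
\[
   \sum_{(a,\nu)\in S}\delta(c_{a,\nu})\, t^a e^{\nu t}=0,
\]
and the term indexed by $(a_0,\nu_0)$ drops out because $\delta(1)=0$. This is a relation with fewer than $|S|$ terms, so by minimality all its coefficients vanish: $\delta(c_{a,\nu})=0$ for every tangent $\delta$ and every $(a,\nu)\in S$. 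Thus each $c_{a,\nu}$ is a first integral of $\F$ in $L$. By hypothesis it is constant, so $c_{a,\nu}\in\C$.

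We are left with a nontrivial $\C$-linear relation, which we write as $G(t)=0$ with
\[
   G(z)=\sum_{(a,\nu)\in S} c_{a,\nu}\, z^a e^{\nu z},
\]
an entire function on $\C$. Because $t$ is a nonconstant holomorphic germ, its image contains a nonempty open subset of $\C$. So $G$ vanishes on an open set, and hence $G\equiv0$ on $\C$.

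It remains to check that the functions $z^a e^{\nu z}$ are $\C$-linearly independent on $\C$; this is the one step needing a classical argument. Group the relation as $\sum_j P_j(z)e^{\mu_j z}=0$ with distinct $\mu_j$ and nonzero polynomials $P_j$, and induct on the number of exponentials. Multiply by $e^{-\mu_1 z}$ and differentiate $\deg P_1+1$ times. This kills $P_1$. Each remaining term $P_j e^{(\mu_j-\mu_1)z}$ becomes $\tilde P_j e^{(\mu_j-\mu_1)z}$ with $\deg\tilde P_j=\deg P_j$, because $\mu_j\neq\mu_1$. The result is a shorter nontrivial relation, contradicting the induction hypothesis. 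Alternatively, one can invoke uniqueness of solutions of the constant-coefficient equation already used in \Cref{prop:closed-extension-normal-form}.

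This contradiction shows that the family $\{t^a e^{\nu t}\}$ is linearly independent over $L$. The only real content beyond this classical fact is the first step: the tangent derivations preserve $L$ and annihilate every $t^a e^{\nu t}$, which forces the coefficients of a minimal relation to be constants.
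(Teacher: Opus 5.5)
Your proposal is correct and matches the paper's proof step for step. The paper also takes a minimal relation, normalizes one coefficient to $1$, and applies tangent vector fields, which makes the remaining coefficients first integrals in $L$ and hence constants; it then invokes the $\C$-linear independence of the functions $s^a e^{\nu s}$. The only difference is that you spell out two steps the paper leaves implicit: the open-mapping argument and the inductive proof of that classical independence.
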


\begin{proof}
Suppose that
\[
   \sum_{i=1}^N \ell_i t^{a_i}e^{\nu_i t}=0
\]
is a nontrivial relation over $L$, with distinct pairs
$(a_i,\nu_i)$ and $N$ minimal. Dividing by $\ell_1$,
we may assume that $\ell_1=1$. Applying any rational vector
field $v$ tangent to $\F$ gives
\[
   \sum_{i=2}^N v(\ell_i)t^{a_i}e^{\nu_i t}=0.
\]
By minimality, $v(\ell_i)=0$ for every $i$ and every tangent $v$.
Thus the $\ell_i$ are first integrals in $L$, so they belong to $\C$. Since $t$ is nonconstant,
this contradicts the linear independence of the
functions $s^a e^{\nu s}$ over $\C$.
\end{proof}

\begin{lemma}
\label{lem:dilation-exponential-polynomial}
Let $G$ be a germ at $0$ of a holomorphic function of one variable.
If the $\C$-linear span of
$\{G(ct)\mid c\in\C^*\}$ is finite-dimensional,
then $G$ is a polynomial.
\end{lemma}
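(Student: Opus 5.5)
Expand $G(t)=\sum_{n\ge0}g_nt^n$ around $0$. Then $G(ct)=\sum_n g_nc^nt^n$, so the span $V$ of the dilates $\{G(ct)\mid c\in\C^*\}$ is a subspace of the space of convergent power series in $t$. The idea is to show that each monomial $t^n$ with $g_n\neq0$ lies in $V$. Since distinct monomials are linearly independent, $\dim V\ge\#\{n\mid g_n\neq0\}$, and finiteness of $\dim V$ then forces only finitely many nonzero coefficients, so $G$ is a polynomial.

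To see that each such monomial lies in $V$, use the fact that $V$ is finite-dimensional and hence closed under limits. More precisely, $V$ is closed in the topology of coefficientwise convergence of formal power series, because a finite-dimensional subspace of $\C[[t]]$ is closed in that (Hausdorff, linear) topology. It is also invariant under the dilations $D_c\colon f(t)\mapsto f(ct)$, since $D_c$ sends $G(c't)$ to $G(cc't)$.

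\emph{Step 1: difference quotients.} Note that $(G(ct)-G(0))/c=\sum_{n\ge1}g_nc^{n-1}t^n$. As $c\to0$ this converges coefficientwise to $g_1t$. The constant $G(0)$ may not lie in $V$ by itself, so we avoid it: the combination $(G(ct)-G(c't))$ with $c,c'\to0$ in a suitable ratio, or simply $\frac{d}{dc}G(ct)$, lies in $V$ because $V$ is finite-dimensional and closed.

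\emph{Step 2: the cleaner route.} The map $c\mapsto G(ct)\in V$ is a holomorphic curve in $V$ on $\C^*$, and it extends holomorphically to $c=0$ coefficientwise. Hence all $c$-derivatives at $c=0$ lie in $V$. The $n$-th derivative at $c=0$ is $n!\,g_nt^n$. So every $t^n$ with $g_n\neq0$ lies in $V$. To make this rigorous, fix a basis $e_1,\dots,e_d$ of $V$ and coordinate functionals given by finitely many coefficients $\lambda_{j}=$ coefficient of $t^{n_j}$ chosen so that $(\lambda_j)$ restricted to $V$ is an isomorphism to $\C^d$. (Such indices exist because $V\to\C^{\N}$ is injective and $V$ is finite-dimensional.) Then $G(ct)=\sum_j \phi_j(c)e_j$, where the $\phi_j$ are finite linear combinations of the $g_{n_j}c^{n_j}$, hence polynomials in $c$. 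Comparing the coefficient of $t^n$ gives $g_nc^n=\sum_j\phi_j(c)\,[t^n]e_j$. The right side is a polynomial in $c$ of degree at most $D=\max n_j$. Therefore $g_n=0$ for $n>D$, and $G$ is a polynomial.

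The only point that needs care is this last coefficient comparison, namely the choice of finitely many coefficient functionals that are injective on $V$. This is standard linear algebra, since a finite-dimensional space of power series injects into $\C^{\N}$ and hence into finitely many coordinates. This argument is short and avoids any topology, so it is the version I would write.
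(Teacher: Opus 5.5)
Your final coefficient-comparison argument is correct and is essentially the paper's proof. The paper shows directly that the coefficient functionals $\ell_n$ with $g_n\neq0$ are linearly independent on $V$, by evaluating a relation on $G(ct)$ and using the independence of the monomials $c^n$ on $\C^*$; that is the same mechanism as your degree comparison after choosing finitely many coefficient functionals that form a basis of $V^\vee$. The topological Steps 1--2 are unnecessary and can be dropped; incidentally, by your own closedness remark the constant $G(0)$ does lie in $V$.
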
 

\begin{proof}
Set $V=\operatorname{Span}_{\C}\{G(ct)\mid c\in\C^*\}$
and write $G(t)=\sum_{n\geq0}a_nt^n$.
For each $n$ with $a_n\neq0$, let $\ell_n\in V^\vee$
be the functional extracting the coefficient of $t^n$.
These functionals are linearly independent: evaluating
a finite relation $\sum_n b_n\ell_n=0$ on $G(ct)$ gives
\[
   \sum_n b_na_nc^n=0
   \qquad\text{for every }c\in\C^*,
\]
hence every $b_n$ is zero. Since $V^\vee$ is
finite-dimensional, only finitely many $a_n$ are nonzero.
\end{proof}

\begin{lemma}
\label{lem:dilation-invariance}
Let $(\omega,\eta)$ be a rational transverse affine
structure for $\F$, and let
$\rho,t\in\mathcal M_{X,x}$ satisfy
\[
   \dd\rho=\rho\eta,
   \qquad
   \dd t=\rho\omega.
\]
Assume that $\rho$ is transcendental over $K$, that $t$
is transcendental over $K(\rho)$, and that $\F$ has no
nonconstant first integral in $K(\rho)$. Let
\[
   u=G(t)=\sum_{j=1}^r P_j(t)e^{\mu_jt},
   \qquad P_j\in\C[t],\quad
   \mu_j\in\C\ \text{distinct},
\]
and suppose that $\Lambda(u)=0$, where
\[
   \Lambda=w^m+a_{m-1}w^{m-1}+\cdots+a_0,
   \qquad a_i\in K,
\]
and $w$ is a rational vector field with $\omega(w)=1$.
Then $G$ is a polynomial.
\end{lemma}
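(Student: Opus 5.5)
The plan is to rewrite $\Lambda(u)=0$ as a polynomial identity in the two quantities $\rho$ and $t$, and to exploit that this identity has a dilation symmetry. That symmetry will show that every dilate $G(ct)$ is again a solution of $\Lambda$. Since $\Lambda$ has order $m$, its solutions among first integrals of $\F$ span a $\C$-space of dimension at most $m$. So the span of the dilates is finite-dimensional, and \Cref{lem:dilation-exponential-polynomial} gives that $G$ is a polynomial.

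\emph{Step 1: expand $w^\ell(G(t))$.} Since $\dd t=\rho\omega$ and $\omega(w)=1$, we have $w(t)=\rho$. Since $\dd\rho=\rho\eta$, we have $w(\rho)=\eta(w)\rho$ with $\eta(w)\in K$. By induction on $\ell$,
\[
   w^\ell\bigl(G(t)\bigr)=\sum_{j=0}^{\ell} q_{\ell,j}\,\rho^j\,G^{(j)}(t),
   \qquad q_{\ell,j}\in K,\quad q_{\ell,\ell}=1 .
\]
The point of the induction is homogeneity: applying $w$ to $\rho^jG^{(j)}(t)$ produces $j\eta(w)\rho^jG^{(j)}(t)+\rho^{j+1}G^{(j+1)}(t)$, so the power of $\rho$ always equals the order of the derivative of $G$. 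Summing over $\ell$ gives
\[
   0=\Lambda(u)=\sum_{j=0}^m b_j\,\rho^j\,G^{(j)}(t),
   \qquad b_j\in K,\quad b_m=1 .
\]

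\emph{Step 2: extract a polynomial identity.} Write $G^{(j)}(t)=\sum_\mu Q_{\mu,j}(t)e^{\mu t}$ with $Q_{\mu,j}\in\C[t]$. Apply \Cref{lem:exp-monomial-independence} with $L=K(\rho)$; this is allowed because $\F$ has no nonconstant first integral in $K(\rho)$. It shows that the monomials $t^a e^{\mu t}$ are linearly independent over $K(\rho)$. Hence, for each $\mu$,
\[
   \sum_j b_j\rho^jQ_{\mu,j}(t)=0 .
\]
Because $\rho$ is transcendental over $K$ and $t$ is transcendental over $K(\rho)$, the pair $(\rho,t)$ is algebraically independent over $K$. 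So this is an identity in the polynomial ring $K[\rho,t]$, and we may substitute $(\rho,t)\mapsto(c\rho,ct)$ for any $c\in\C^*$.

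\emph{Step 3: dilates are solutions.} For $H_c(t)=G(ct)$ we have $H_c^{(j)}(t)=c^j\sum_\mu Q_{\mu,j}(ct)e^{c\mu t}$. Rerunning Step 1 with $H_c$ in place of $G$ gives
\[
   \Lambda\bigl(H_c(t)\bigr)=\sum_\mu e^{c\mu t}\sum_j b_j(c\rho)^jQ_{\mu,j}(ct).
\]
This vanishes by the substituted identity from Step 2. Thus every $G(ct)$ lies in the solution space of $\Lambda$ among germs of first integrals of $\F$. In the frame $(f,w(f),\dots,w^m(f))$ of $T^m$, that space injects into the fibre of $T^{m-1}$ at a general point via $f\mapsto (f,\dots,w^{m-1}f)$, so it has dimension at most $m$. \Cref{lem:dilation-exponential-polynomial}, applied to the germ of $G$ at the value $t(x)$ after translating $t$, finishes the argument.

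The main obstacle is Step 2. One must make sure the coefficient extraction really yields identities in $K[\rho,t]$, rather than relations that merely hold at the particular functions $\rho$ and $t$. The stated transcendence hypotheses are exactly what make the substitution $(\rho,t)\mapsto(c\rho,ct)$ legitimate. A secondary point is the translation in Step 3: \Cref{lem:dilation-exponential-polynomial} is stated at $0$, so we either normalize $t(x)=0$ by adding a constant to $t$ (which preserves $\dd t=\rho\omega$) or note that the dilation argument works at any base value.
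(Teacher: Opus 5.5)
Your proof is correct and follows the paper's argument. The paper packs your Steps 1--3 into one observation: $\rho\mapsto c\rho$, $t\mapsto ct$, $e^{\nu t}\mapsto e^{c\nu t}$ defines a differential $K$-embedding of $\bigoplus_{\nu}K(\rho,t)e^{\nu t}$, well defined by the same transcendence hypotheses and \Cref{lem:exp-monomial-independence} with $L=K(\rho)$. This embedding commutes with $\Lambda$ because $\Lambda$ has coefficients in $K$; you verify the same commutation by hand through the homogeneity of $w^\ell(G(t))$. The concluding steps are identical (solution space of dimension at most $m$, then \Cref{lem:dilation-exponential-polynomial}), and the translation issue you raise is harmless because $G$ is entire and $t$ is nonconstant.
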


\begin{proof}
Fix $c\in\C^*$. Since $\rho$ is transcendental over $K$
and $t$ is transcendental over $K(\rho)$, the assignments
\[
   \rho\longmapsto c\rho,
   \qquad t\longmapsto ct
\]
define a differential $K$-automorphism $\phi$ of
$K(\rho,t)$. By \Cref{lem:exp-monomial-independence},
applied with $L=K(\rho)$, the exponentials $e^{\nu t}$
are linearly independent over $K(\rho,t)$, as one sees
by clearing denominators. Thus the ring they generate
is the group algebra
\[
   \bigoplus_{\nu\in\C}K(\rho,t)e^{\nu t},
\]
and $\phi$ extends to a differential embedding of this
ring by
\[
   e^{\nu t}\longmapsto e^{\nu ct}.
\]
Indeed, the identities $\dd\rho=\rho\eta$ and
$\dd t=\rho\omega$ show that this extension commutes
with the derivations.

Since $\Lambda$ has coefficients in $K$, it commutes
with $\phi$. Hence
\[
   \Lambda(G(ct))=\phi(\Lambda(G(t)))=0
   \qquad\text{for every }c\in\C^*.
\]
The first integrals annihilated by $\Lambda$ form a
$\C$-vector space of dimension at most $m$: restricting
to a general local transversal gives an ordinary
differential equation of order $m$.
Therefore the span of the functions $G(ct)$ is
finite-dimensional, and
\Cref{lem:dilation-exponential-polynomial} implies
that $G$ is a polynomial.
\end{proof}

\begin{proposition}
\label{prop:no-rfi-module-factors}
Let $u$ be a component of a functional abelian relation
of a web $\W$ on a smooth projective variety.
Suppose that the corresponding foliation $\F$ has
no nonconstant rational first integral.
Then there is a finite algebraic extension $K'/K$
such that $K'\otimes_K E(u)$ has a composition series
with rank-one simple factors.
\end{proposition}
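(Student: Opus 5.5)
If $u$ is constant, $E(u)=K$ and there is nothing to prove, so assume $u$ is nonconstant. By \Cref{prop:finite-component-module-main}, $E(u)$ is finite-dimensional. After passing to a decomposing cover, $u$ is a component along $\F$, so $u\in\Sol_{\F}(\W')$ for the web $\W'$ formed by the other factors. The plan is to embed $E(u)$ into an explicit differential module over a Liouvillian extension that is visibly triangular with rank-one steps, and then to descend along a finite Galois extension using \Cref{lem:restriction-rank-one-flags}. The cover does not spoil the hypothesis: by \Cref{lem:no-new-first-integrals}, $\F$ still has no rational first integral on the cover.

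By \Cref{thm:abel-to-affine} there is a rational transverse affine structure $(\omega,\eta)$. We follow the case split in the proof of \Cref{thm:no-rfi}.

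\emph{Case A.} $\F$ has a closed defining form $\alpha$ over a finite algebraic extension $K'$, which we take Galois over $K$. Put $\dd t=\alpha$. By \Cref{prop:closed-extension-normal-form},
\[
   u=\sum_j P_j(t)e^{\mu_j t}.
\]
Consider the $K'$-span $V$ of the monomials $t^a e^{\mu_j t}$ with $a\le\deg P_j$. It is stable under rational derivations, because $\partial(t^a e^{\mu t})=\bigl(a t^{a-1}+\mu t^a\bigr)\alpha(\partial)\,e^{\mu t}$. Order the monomials by $(j,a)$. Each $e^{\mu_j t}$ spans a rank-one submodule, and the quotients along this filtration are rank one. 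Hence $V$ has a flag with rank-one factors. The monomials are $K'$-independent by \Cref{lem:exp-monomial-independence}, so $V$ is genuinely a $K'$-module and $E(u)\subset V$. Apply \Cref{lem:restriction-rank-one-flags} with $L=K'$, after replacing $E(u)$ by the corresponding $K$-module.

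\emph{Case B.} There is no closed defining form over any finite extension. Let $\rho=E$ with $\dd\rho=\rho\eta$, and $\dd t=\rho\omega$. By \Cref{lem:K1-dichotomy}, $\rho$ is transcendental over $K$ and $\F$ has no first integral in $K(\rho)$. \Cref{prop:closed-extension-normal-form} with $L=K(\rho)$ gives $u=G(t)$ as an exponential polynomial. If $t$ is transcendental over $K(\rho)$, then \Cref{lem:dilation-invariance} applies, using the rational annihilator from the proof of \Cref{prop:finite-component-module-main}, and shows that $G$ is a polynomial. Now $u=\sum c_a t^a$, and the tower is $K\subset K(\rho)\subset K(\rho,t)$.

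We must embed $E(u)$ into a module that is triangular over a finite extension of $K$, not over $K(\rho)$. Here we use the graded structure: the derivatives of $t^a$ lie in the $K$-span of the monomials $\rho^i t^{a-i}$. Indeed, $\partial(\rho^i t^b)=\rho^i t^b\, i\eta(\partial)+b\,\rho^{i+1}t^{b-1}\omega(\partial)$. This $K$-span has a filtration by the power of $t$, with rank-one factors $K\rho^i t^b$ whose connection form is $i\eta$. The monomials are $K$-independent by transcendence, so this is a flag with rank-one factors already over $K$.

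If instead $t$ is algebraic over $K(\rho)$, we need a separate argument. Since $\dd t=\rho\omega$, a standard Liouville-type argument shows that $t$ is a polynomial in $\rho$ with coefficients in $\overline K$. If $t$ has the form $a\rho$ plus a constant, then $\dd(a\rho)=\rho\omega$ forces $\omega=\dd a+a\eta$. One then gets $a^{-1}$ times a closed form defining $\F$ over $\overline K$, which returns us to Case A. Otherwise, the same monomial-span argument goes through, with exponentials $e^{\mu t}$ adjoined as rank-one pieces.

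The main obstacle is Case B: justifying that $u$ is polynomial in $t$, equivalently that no exponentials $e^{\mu t}$ with $\mu\ne0$ survive. Exponentials of $t$ with nonconstant $\rho$ would generate an infinite-dimensional module, contradicting \Cref{prop:finite-component-module-main}. This is exactly where \Cref{lem:dilation-invariance} is needed, and the subcases where its transcendence hypotheses fail must be reduced back to Case A.
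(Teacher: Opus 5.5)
Your proof follows the paper's own argument: the same two cases, the module spanned by the $t^a e^{\mu_j t}$ descended through \Cref{lem:restriction-rank-one-flags}, the dilation argument, and the $K$-module spanned by the monomials $\rho^a t^b$. The one genuine defect is the subcase ``$t$ algebraic over $K(\rho)$'' in Case B, which you handle with unsupported claims. No Liouville-type argument making $t$ a polynomial in $\rho$ over $\overline K$ is actually given. The assertion that otherwise the monomial argument goes through ``with exponentials $e^{\mu t}$ adjoined as rank-one pieces'' is wrong as stated. For $\mu\neq0$ the logarithmic derivative of $e^{\mu t}$ is $\mu\rho\,\omega$, which is not defined over any finite extension of $K$ because $\rho$ is transcendental, so $e^{\mu t}$ spans no rank-one piece there.

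What you are missing is that this subcase is empty. Since $\dd t=\rho\,\omega$, the function $t$ is a nonconstant first integral of $\F$. In Case B, \Cref{lem:K1-dichotomy} shows that $\F$ has no nonconstant first integral in $K(\rho)$. By \Cref{lem:no-new-first-integrals} it then has none in any finite algebraic extension of $K(\rho)$, so $t$ is transcendental over $K(\rho)$. Consequently the hypotheses of \Cref{lem:dilation-invariance} always hold in Case B, the $K$-independence of the $\rho^a t^b$ is automatic, and nothing needs to be ``reduced back to Case A''.

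There are also some smaller bookkeeping points.
\begin{enumerate}
\item The transcendence of $\rho$ over $K$ does not follow from \Cref{lem:K1-dichotomy}. It follows directly from the Case B hypothesis: if $\rho$ were algebraic, $\rho\,\omega$ would be a closed defining form over the finite extension $K(\rho)$.
\item In Case A, \Cref{prop:closed-extension-normal-form} and \Cref{lem:exp-monomial-independence} need the absence of nonconstant first integrals in $K'$. This should likewise be quoted from \Cref{lem:no-new-first-integrals}.
\item The preliminary passage to a decomposing cover is unnecessary, since the proposition is applied to a decomposed web (\Cref{conv:decomposed-web}). If you keep it, your Case B flag lives over the function field of the cover. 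You then need the same Galois descent via \Cref{lem:restriction-rank-one-flags} as in Case A to return to $K$.
\end{enumerate}
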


\begin{proof}
If $u$ is constant, then $E(u)=K\cdot u$ and the
statement is immediate. Assume that $u$ is nonconstant.
We distinguish two cases, according to the proof of
\Cref{thm:no-rfi}.

First suppose that $\F$ is defined by a nonzero closed
one-form $\alpha$ over a finite algebraic extension
$L/K$. Enlarging $L$ to its Galois closure, we may
assume that $L/K$ is Galois. By
\Cref{lem:no-new-first-integrals}, $\F$ has no
nonconstant first integral in $L$.
Choose a foliated coordinate $t$ with $\dd t=\alpha$.
By \Cref{prop:closed-extension-normal-form},
\[
   u=\sum_{j=1}^r P_j(t)e^{\mu_jt}.
\]
Omitting zero terms, consider the $L$-vector space
\[
   V=\bigoplus_{j=1}^r
     \bigoplus_{a=0}^{\deg P_j}
     L\cdot t^ae^{\mu_jt}.
\]
The sum is direct by
\Cref{lem:exp-monomial-independence}.
For every $\partial\in\Delta$,
\[
   \partial\bigl(t^ae^{\mu_jt}\bigr)
   =\alpha(\partial)
      \bigl(a\,t^{a-1}+\mu_jt^a\bigr)e^{\mu_jt}.
\]
Ordering the monomials by the lexicographic order
on $(j,a)$, their partial spans form a flag of
differential $L$-submodules with rank-one quotients.
Since $V$ contains $u$, we have $E(u)\subset V$.
Applying \Cref{lem:restriction-rank-one-flags} with
$M=E(u)$ proves the proposition with $K'=L$.

Suppose now that no finite algebraic extension of $K$
admits a nonzero closed one-form defining $\F$.
In the notation of the proof of \Cref{thm:no-rfi},
$\dd\omega\neq0$ and $\rho=E$ is transcendental over
$K$: otherwise $\omega$, or $E\omega$ over $K(E)$,
would be such a form. By \Cref{lem:K1-dichotomy},
$\F$ has no nonconstant first integral in
$K_1=K(\rho)$.

We have $\dd\rho=\rho\eta$ and $\dd t=\rho\omega$
for a foliated coordinate $t$. Applying
\Cref{prop:closed-extension-normal-form} with
$L=K_1$ and $\alpha=\rho\omega$, we obtain
\[
   u=G(t)=\sum_{j=1}^r P_j(t)e^{\mu_jt},
   \qquad P_j\in\C[t].
\]
Moreover, $t$ is transcendental over $K_1$:
otherwise it would be a nonconstant first integral
in a finite extension of $K_1$, contrary to
\Cref{lem:no-new-first-integrals}.
As in the proof of
\Cref{prop:finite-component-module-main}, write a
rational annihilator of $u$ in powers of a rational
vector field $w$ with $\omega(w)=1$.
Then \Cref{lem:dilation-invariance} applies, so $G$
is a polynomial, say of degree $q$.

Consider the $K$-vector space
\[
   W=\bigoplus_{\substack{a,b\geq0\\a+b\leq q}}
      K\,\rho^at^b\subset K(\rho,t).
\]
The sum is direct because $\rho$ and $t$ are
algebraically independent over $K$. The identity
\[
   \dd(\rho^at^b)
   =a\,\rho^at^b\eta
     +b\,\rho^{a+1}t^{b-1}\omega
\]
shows that $W$ is a differential $K$-module.
Order its monomials by the lexicographic order on
$(a+b,b)$. Their partial spans form a flag of
differential submodules with rank-one quotients.

Since $u=G(t)\in W$, we have $E(u)\subset W$.
Intersecting this flag with $E(u)$ and omitting
repetitions gives a composition series with rank-one
simple factors. Thus $K'=K$ suffices in this case.
\end{proof}
\section{Resonance and descent through rational fibrations}
\label{sec:resonance-descent}

We investigate two structural properties of the differential modules $E(u_i)$ associated with an abelian relation $u_1+ \cdots + u_k =0$ on a smooth projective variety. The first is \emph{resonance}: every simple factor $E(u_i)$ occurs, up to isomorphism, as the simple factor of  $E(u_j)$ for some $j\neq i$. The second is \emph{descent} through rational fibrations, saying that when $u_i$ is a component associated with a rational fibration, $E(u_i)$ is the pullback of a differential module over the base of that fibration.

\subsection{Resonance}

Let
\[
   u_1+\cdots+u_k=0
\]
be a functional abelian relation of a $k$-web $\W$ on
a smooth projective variety $X$. Set $K=\C(X)$ and
$E_i=E(u_i)$, regarding these modules as subspaces of
the same field of meromorphic germs. They are
finite-dimensional by
\Cref{prop:finite-component-module-main}.

\begin{lemma}
\label{lem:component-resonance-main}
Every simple factor of $E_i$ is isomorphic to a simple
factor of some $E_j$ with $j\neq i$.
\end{lemma}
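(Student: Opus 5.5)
The plan is to use that all the modules $E_1,\ldots,E_k$ live inside the same differential field of meromorphic germs. There we can form their sum, a finite-dimensional differential $K$-module, and compare it with the sum of the modules other than $E_i$. Since $u_i=-\sum_{j\neq i}u_j$, the element $u_i$ lies in
\[
   S_i=\sum_{j\neq i}E_j\subset\mathcal M_{X,x}.
\]
This sum is a finite sum of differential $K$-submodules of $\mathcal M_{X,x}$, each finite-dimensional by \Cref{prop:finite-component-module-main}. Hence $S_i$ is a finite-dimensional $K$-subspace stable under all rational derivations, that is, a differential module. It contains $u_i$, so by minimality of $E(u_i)$ we get $E_i\subset S_i$.

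Next, compare simple factors. The natural surjection of differential modules
\[
   \bigoplus_{j\neq i}E_j\longrightarrow S_i,
   \qquad (m_j)_j\longmapsto\sum_j m_j,
\]
is $K$-linear and commutes with the derivations. Therefore $S_i$ is a quotient of the direct sum, and every simple factor of $S_i$ is a simple factor of some $E_j$ with $j\neq i$. Since $E_i$ is a differential submodule of $S_i$, its simple factors occur among those of $S_i$. This uses the Jordan--H\"older facts recalled before the definition of $E(u)$: simple factors of submodules, quotients and finite direct sums behave as stated.

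Chaining these two inclusions gives the claim. The only point needing care is that the sum of submodules inside $\mathcal M_{X,x}$ is again a differential module and that the summation map is a morphism of differential modules. Both hold because derivations act additively on germs and each $E_j$ is stable under them. One should also note the degenerate case $k=1$ cannot occur for a nontrivial statement: if $k=1$ then $u_1=0$ and $E_1=0$ has no simple factors. I expect no real obstacle here. The main subtlety is simply working inside a common ambient space, which is why the $E_j$ were regarded as subspaces of the same field of germs, possibly after passing to a decomposing cover as in \Cref{conv:decomposed-web}.
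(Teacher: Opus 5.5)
Your proposal is correct and follows essentially the same argument as the paper: you use $u_i=-\sum_{j\neq i}u_j$ to get $E_i\subseteq\sum_{j\neq i}E_j$, identify that sum as a quotient of $\bigoplus_{j\neq i}E_j$, and conclude by Jordan--H\"older. Your extra remarks on working in a common ambient space of germs and on the degenerate case are harmless additions to the paper's two-line proof.
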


\begin{proof}
Since $u_i=-\sum_{j\neq i}u_j$ and the sum of the
modules $E_j$ is stable under differentiation,
\[
   E_i\subseteq\sum_{j\neq i}E_j.
\]
The right-hand side is a quotient of
$\bigoplus_{j\neq i}E_j$, so the claim follows from
the Jordan--H\"older theorem.
\end{proof}

\subsection{Modules pulled back from a curve}

Let $p:X\dashrightarrow B$ be a rational fibration
from a smooth projective variety to a curve.
After resolving indeterminacies and taking the Stein
factorization, we may assume that $p$ is a morphism
with connected generic fibre. Set $F=\C(B)$ and
identify $F$ with its image in $K=\C(X)$.
There is an exact sequence
\[
   0\longrightarrow\Der_F(K)
   \longrightarrow\Der_{\C}(K)
   \longrightarrow K\otimes_F\Der_{\C}(F)
   \longrightarrow0.
\]
The elements of $\Der_F(K)$ are the rational vector
fields tangent to the fibres of $p$.

\begin{lemma}
\label{lem:vertical-constants}
The common field of constants of the vertical derivations is
\[
   K^{\Der_F(K)}
   :=\{h\in K\mid v(h)=0\text{ for every }v\in\Der_F(K)\}
   =F.
\]
\end{lemma}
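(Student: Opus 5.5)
The inclusion $F\subset K^{\Der_F(K)}$ is immediate, since every $F$-derivation of $K$ kills $F$. For the reverse inclusion, I would use the algebraic description of derivations of a finitely generated field extension. The plan is to show that $F$ is algebraically closed in $K$, and that $K^{\Der_F(K)}$ is algebraic over $F$; together these give equality.

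First I would establish that $K^{\Der_F(K)}$ is algebraic over $F$. Choose a transcendence basis $x_1,\ldots,x_{n-1}$ of $K$ over $F$. Since we are in characteristic zero, $K/F(x_1,\ldots,x_{n-1})$ is finite separable. Hence each $F$-derivation $\partial/\partial x_a$ of $F(x_1,\ldots,x_{n-1})$ extends uniquely to an element of $\Der_F(K)$. Now let $h\in K$ be killed by all of $\Der_F(K)$. If $h$ were transcendental over $F$, we could complete $\{h\}$ to a transcendence basis $h,y_2,\ldots,y_{n-1}$ of $K/F$. The derivation $\partial/\partial h$ of $F(h,y_2,\ldots)$ extends to an element of $\Der_F(K)$ sending $h$ to $1\neq0$, a contradiction. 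Therefore $h$ is algebraic over $F$. Equivalently, one can phrase this step as $\dd_{K/F}h=0$ in $\Omega^1_{K/F}$, which in characteristic zero forces $h$ to be algebraic over $F$.

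Next I would show that $F$ is algebraically closed in $K$, using the reduction that $p:X\to B$ is a morphism with connected generic fibre (Stein factorization). Let $h\in K$ be algebraic over $F$. Then $F(h)$ is the function field of a smooth projective curve $B'$ with a finite map $B'\to B$, and the inclusion $F(h)\subset K$ gives a rational map $X\dashrightarrow B'$ through which $p$ factors. Restricting to the generic fibre $X_\xi$, which is geometrically connected over $F$, the element $h$ gives a rational function on $X_\xi$ that is algebraic over $F$. Since $X_\xi$ is a normal proper variety over $F$ with $H^0(X_\xi,\OO)=F$, by connectedness and Stein factorization $h$ lies in $F$. Equivalently, Stein factorization $p_*\OO_X=\OO_B$ says exactly that $F$ is algebraically closed in $K$. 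Hence $\deg(B'/B)=1$ and $h\in F$.

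The main obstacle is the second step: making precise that \lq\lq connected generic fibre after Stein factorization\rq\rq{} means $F$ is algebraically closed in $K$. I would argue directly from $p_*\OO_X=\OO_B$. An element $h$ of $K$ integral over $F$ is a rational function whose minimal polynomial has coefficients in $F$. Over a dense open $U\subset B$ where these coefficients are regular, $h$ is integral over $\OO_B(U)$ and hence regular on the normal variety $p^{-1}(U)$ minus codimension two, so regular on $p^{-1}(U)$. Therefore $h\in\OO_X(p^{-1}(U))=\OO_B(U)\subset F$.
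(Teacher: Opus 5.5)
Your proposal is correct and follows the same route as the paper: $\dd_{K/F}h=0$ forces $h$ to be algebraic over $F$ in characteristic zero, and connectedness of the generic fibre ($p_*\OO_X=\OO_B$ after Stein factorization) makes $F$ algebraically closed in $K$. You supply details the paper leaves implicit, namely extending $\partial/\partial h$ from a transcendence basis containing $h$, and the integrality argument on $p^{-1}(U)$. In that last step the codimension-two remark is unnecessary, since normality of the local rings already gives regularity of $h$ at every point of $p^{-1}(U)$.
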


\begin{proof}
If $h\in K^{\Der_F(K)}$, then $\dd_{K/F}h=0$.
In characteristic zero this means that $h$ is algebraic over $F$.
Since the generic fibre is connected, $F$ is algebraically closed in $K$,
and hence $h\in F$. The reverse inclusion is immediate.
\end{proof}

\begin{lemma}
\label{lem:vertical-stable-subspaces}
Let $d\geq1$, and let the vertical derivations $\Der_F(K)$ act
componentwise on the $K$-vector space $K^d$ by differentiation of the
coefficients. Every $K$-vector subspace $N\subset K^d$ stable under
all these derivations has the form $N=K\otimes_F N_F$ for an
$F$-vector subspace $N_F\subset F^d$.
\end{lemma}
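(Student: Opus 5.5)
The plan is to use the reduced row echelon form, which is the classical Galois-descent argument for subspaces stable under a family of derivations whose common constants form a subfield. By \Cref{lem:vertical-constants}, the common constants of $\Der_F(K)$ acting on $K$ are exactly $F$.

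Let $r=\dim_K N$. If $r=0$, take $N_F=0$. Otherwise choose a $K$-basis $n_1,\ldots,n_r$ of $N$ in reduced row echelon form with respect to the standard basis of $K^d$. Concretely, there are pivot indices $j_1<\cdots<j_r$ such that the $j_s$-th coordinate of $n_i$ is $\delta_{is}$. Such a basis exists and is unique, by Gaussian elimination over $K$.

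Next, apply a vertical derivation $v\in\Der_F(K)$. By hypothesis $v(n_i)\in N$, and since $v$ acts coefficientwise, the pivot coordinates of $v(n_i)$ are $v(\delta_{is})=0$. Any element of $N$ is a $K$-combination $\sum_s \lambda_s n_s$, and its $j_s$-th coordinate is $\lambda_s$. So an element of $N$ with all pivot coordinates zero must vanish. Hence $v(n_i)=0$ for every $i$ and every vertical $v$. In other words, every coordinate of every $n_i$ lies in $K^{\Der_F(K)}=F$, by \Cref{lem:vertical-constants}. Thus $n_i\in F^d$.

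Set $N_F=\operatorname{Span}_F\{n_1,\ldots,n_r\}\subset F^d$. The $n_i$ are $K$-linearly independent, so they are $F$-linearly independent. Since $F^d\subset K^d$ and $K\otimes_F F^d\simeq K^d$, the natural map $K\otimes_F N_F\to K^d$ is injective. Its image is the $K$-span of the $n_i$, which is $N$. Therefore $N=K\otimes_F N_F$.

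There is no real obstacle. The only point needing care is that the coordinate-wise action makes $v$ kill the constant pivot entries, and this is immediate. The substantive input, that the vertical constants are exactly $F$, is \Cref{lem:vertical-constants}, which uses connectedness of the generic fibre.
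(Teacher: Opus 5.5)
Your proof is correct, and it takes a slightly different route from the paper's.

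The paper takes an arbitrary basis matrix $A$ of $N$, writes $v(A)=AC_v$, and uses the identity $v(p_I(A))=\operatorname{tr}(C_v)\,p_I(A)$ for the Pl\"ucker coordinates. From this it concludes that every ratio $p_I(A)/p_J(A)$ is a vertical constant, hence lies in $F$ by \Cref{lem:vertical-constants}. So $[N]$ is an $F$-rational point of the Grassmannian, and therefore comes from an $F$-subspace.

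Your reduced row echelon basis is exactly the normalized representative of $[N]$ in the affine chart $p_J\neq0$, where $J$ is your pivot set. Its entries are, up to sign, the ratios $p_I/p_J$. So both arguments show the same underlying fact: the Grassmannian point of $N$ is killed by the vertical derivations.

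What your normalization buys is that $v(n_i)$ lands in $N$ with vanishing pivot coordinates, so $v(n_i)=0$ follows at once, with no determinant computation. It also produces an explicit $F$-basis. The last step, from an $F$-rational Grassmannian point to $N=K\otimes_F N_F$, which the paper leaves implicit, then becomes a one-line flatness remark.

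The paper's version, in exchange, works with any basis and any nonvanishing Pl\"ucker coordinate. This makes the coordinate-free, projective nature of the statement more visible.
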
 

\begin{proof}
Let $r=\dim_K N$ and choose a matrix
$A\in\operatorname{Mat}_{d\times r}(K)$ whose columns
form a basis of $N$.
For every $v\in\Der_F(K)$, stability gives
\[
   v(A)=AC_v
\]
for some $C_v\in\operatorname{Mat}_r(K)$.
Its Pl\"ucker coordinates therefore satisfy
\[
   v(p_I(A))=\operatorname{tr}(C_v)\,p_I(A).
\]
Choose $J$ with $p_J(A)\neq0$. Then
\[
   v\left(\frac{p_I(A)}{p_J(A)}\right)=0
   \qquad\text{for every }v\in\Der_F(K),
\]
so every ratio belongs to $F$ by
\Cref{lem:vertical-constants}. Thus $[N]$ is an
$F$-rational point of the Grassmannian, and the
corresponding subspace $N_F\subset F^{d}$ satisfies
$N=K\otimes_F N_F$. 
\end{proof} 

For a differential $F$-module $M_F$, the pullback
$K\otimes_F M_F$ has the connection
\[
   \delta(f\otimes m)
   =\delta(f)\otimes m+f\,\delta(m),
   \qquad
   \delta\in\Der_{\C}(K),
\]
where the action on $m$ is induced by
$\Der_{\C}(K)\to K\otimes_F\Der_{\C}(F)$.
This defines a functor
\[
   K\otimes_F-:
   \operatorname{DiffMod}(F)
   \longrightarrow\operatorname{DiffMod}(K).
\]

\begin{lemma}
\label{lem:descent-subquotients-main}
Let $M_F$ be a differential $F$-module and let
$N\subset K\otimes_F M_F$ be a differential
$K$-submodule. Then there is a differential
$F$-submodule $N_F\subset M_F$ such that
$N=K\otimes_F N_F$.
Consequently, the essential image of $K\otimes_F-$
is closed under subobjects, quotients, and subquotients.
\end{lemma}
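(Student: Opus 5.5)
Pick an $F$-basis $m_1,\ldots,m_d$ of $M_F$; this identifies $K\otimes_F M_F$ with $K^d$. The plan is to show that $N$ is a $K$-subspace to which \Cref{lem:vertical-stable-subspaces} applies. Then we check that the resulting $F$-form is a differential submodule of $M_F$, and finally deduce the closure properties.

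\emph{Step 1: vertical action.} A vertical derivation $v\in\Der_F(K)$ maps to zero in $K\otimes_F\Der_{\C}(F)$. By the formula for the pullback connection, $v(f\otimes m)=v(f)\otimes m$. So in the basis $1\otimes m_i$, the vertical derivations act on $K^d$ componentwise, by differentiating coefficients. Since $N$ is a differential $K$-submodule, it is stable under every $v\in\Der_F(K)$. \Cref{lem:vertical-stable-subspaces} then gives an $F$-subspace $N_F\subset F^d\simeq M_F$ with $N=K\otimes_F N_F$.

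\emph{Step 2: $N_F$ is a differential $F$-submodule.} This is the main point, and I would handle it by recovering $N_F$ intrinsically as the vertically flat part of $N$. Let $\partial\in\Der_{\C}(F)$ and choose a lift $\tilde\partial\in\Der_{\C}(K)$ extending $\partial$, which exists by the exact sequence. For $n\in N_F$ we have $\tilde\partial(1\otimes n)=1\otimes\partial(n)$, because the image of $\tilde\partial$ in $K\otimes_F\Der_{\C}(F)$ is $1\otimes\partial$. This element lies in $N$, since $N$ is a differential module. It also lies in $1\otimes M_F$. Now $(1\otimes M_F)\cap(K\otimes_F N_F)=1\otimes N_F$: choose an $F$-basis of $M_F$ extending one of $N_F$, and compare coordinates. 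Hence $\partial(n)\in N_F$, so $N_F$ is a differential $F$-submodule. Moreover, $K\otimes_F N_F\to K\otimes_F M_F$ is compatible with connections, so $N\simeq K\otimes_F N_F$ as differential modules.

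\emph{Step 3: consequences.} Subobjects are the content of Steps 1--2, transported along an isomorphism $M\simeq K\otimes_F M_F$. For quotients, $K\otimes_F-$ is exact and commutes with the connections, so
\[
(K\otimes_F M_F)/(K\otimes_F N_F)\simeq K\otimes_F(M_F/N_F)
\]
as differential modules. Subquotients follow by combining the two cases. I expect Step 2 to be the only delicate point: one must check that the lifted derivation acts on $1\otimes M_F$ exactly through $\partial$, with no extra vertical contribution. This holds because $\tilde\partial$ kills $1$.
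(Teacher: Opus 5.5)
Your proof is correct and follows the paper's argument essentially step for step. You apply \Cref{lem:vertical-stable-subspaces} to the coefficientwise vertical action on $K^d$, use a lift of $\partial$ acting on $1\otimes M_F$ through $\partial$ to get $\partial(N_F)\subset M_F\cap(K\otimes_F N_F)=N_F$, and obtain quotients from $(K\otimes_F M_F)/(K\otimes_F N_F)\simeq K\otimes_F(M_F/N_F)$; your adapted-basis check of the intersection identity just spells out a detail the paper leaves implicit.
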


\begin{proof}
Choose an $F$-basis of $M_F$, identifying its pullback with $K^d$,
where $d=\dim_F M_F$. Every vertical derivation acts on $K^d$ by
differentiation of the coefficients. Since $N$ is stable under all
vertical derivations, \Cref{lem:vertical-stable-subspaces} gives an
$F$-subspace $N_F\subset M_F$ with $N=K\otimes_F N_F$.

Let $\partial\in\Der_{\C}(F)$ and choose a rational lift
$\delta\in\Der_{\C}(K)$. For $m\in M_F$,
\[
   \delta(1\otimes m)=1\otimes\partial(m).
\]
Since $N$ is a differential submodule, this implies
\[
   \partial(N_F)\subset
   M_F\cap(K\otimes_F N_F)=N_F.
\]
Thus $N_F$ is a differential $F$-submodule.
Finally,
\[
   (K\otimes_F M_F)/(K\otimes_F N_F)
   \simeq K\otimes_F(M_F/N_F),
\]
which proves the assertion about quotients and
subquotients.
\end{proof}

\begin{proposition}
\label{prop:algebraic-component-descent-main}
Let $\F$ be a foliation on a smooth projective variety,
defined by a rational fibration $p:X\dashrightarrow B$
with connected generic fibre. Let $u$ be the
$\F$-component of a functional abelian relation of
$\W\boxt\F$, where $\W$ is transversal to $\F$.
Then $E(u)$ is pulled back from a differential module
over $F=\C(B)$.
\end{proposition}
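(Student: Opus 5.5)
Since $u$ is a first integral of $\F$, the natural descent candidate is the $F$-span of the transverse derivatives of $u$. First I would construct a differential $F$-module $M_F$ containing $u$ with $K\otimes_F M_F\supset E(u)$, and then apply \Cref{lem:descent-subquotients-main} to $E(u)\subset K\otimes_F M_F$. Choose a local coordinate $s$ on $B$ (a transcendental element of $F$), so $\Der_{\C}(F)=F\,\partial_s$. Since $u$ is constant along the fibres of $p$, locally $u=G\circ p$ for a germ $G$ at $p(x)$. The candidate is
\[
   M_F=\operatorname{Span}_F\{G,\partial_sG,\partial_s^2G,\ldots\},
\]
a space of germs of meromorphic functions on $B$. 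Its pullback, via $K\otimes_F M_F\to\mathcal M_{X,x}$, contains $u$.

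The key step is that $M_F$ is finite-dimensional. If $u$ is constant this is trivial, so assume not. Then $\Sol_{\F}(\W)\neq\C$, and \Cref{lem:rational-transversality} provides the minimal annihilator $\Lambda$ with normalized form $D_\Lambda$ of order $m$. The coefficients $A_\ell$ of this form in the foliated coordinate $t=s\circ p$ are meromorphic first integrals of $\F$. Because $\Lambda$ is a rational section of $(T^m)^\vee$ over $\C(X)$, its normalized coefficients in the frame of the rational coordinate $t$ lie in $K$. So the $A_\ell$ are rational functions on $X$ annihilated by all vertical derivations, and by \Cref{lem:vertical-constants} they lie in $F$.

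It then follows that $G$ satisfies the ordinary differential equation
\[
   \partial_s^mG+\sum_{\ell=1}^{m-1}A_\ell\,\partial_s^\ell G=0,\qquad A_\ell\in F.
\]
Hence $M_F$ is spanned by $G,\ldots,\partial_s^{m-1}G$ and is a differential $F$-module. The map $K\otimes_F M_F\to\mathcal M_{X,x}$ is compatible with the derivations: a vertical derivation kills $G\circ p$, and a lift $\delta$ of $\partial_s$ acts through the chain rule. The image of this map is a finite-dimensional, derivation-stable $K$-space containing $u$, so it contains $E(u)$.

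The main obstacle is that this map need not be injective. Germs on $B$ that are $F$-linearly independent might become $K$-dependent after pullback, so I have to show that $K\otimes_F M_F\to\mathcal M_{X,x}$ is injective. I would argue as in \Cref{lem:exp-monomial-independence}. Take a minimal relation $\sum_i k_i\,(g_i\circ p)=0$ with $k_i\in K$, the $g_i\in M_F$ being $F$-independent, and normalize $k_1=1$. Applying vertical derivations and using minimality makes every $k_i$ vertically constant, so $k_i\in F$ by \Cref{lem:vertical-constants}. This contradicts the $F$-independence of the $g_i$, since $p$ is dominant.

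With injectivity, $E(u)$ is a differential $K$-submodule of $K\otimes_F M_F$. \Cref{lem:descent-subquotients-main} then gives $N_F\subset M_F$ with $E(u)=K\otimes_F N_F$. In fact $E(u)=K\otimes_F M_F$, since $E(u)$ contains $u$ and hence $\delta^j u=(\partial_s^jG)\circ p$ for every $j$.
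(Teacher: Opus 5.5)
Your argument is correct, but it takes a different route from the paper. The paper never works with germs on $B$. It takes the normalized annihilator $D=\delta^m+\sum_\ell c_\ell\delta^\ell$, where $\delta$ is a rational lift of $\partial_t$, so that $c_\ell\in F$ by \Cref{lem:rational-transversality}\textup{(iii)} and \Cref{lem:vertical-constants}. It then forms the abstract companion module $M_D$ over $F$. The assignment $e_\ell\mapsto\delta^\ell(u)$ gives a surjective morphism $K\otimes_F M_D\to E(u)$, and closure of the essential image under quotients (\Cref{lem:descent-subquotients-main}) finishes the proof. No injectivity question ever arises.

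You instead descend a concrete module, the $F$-span $M_F$ of the derivatives of $G$. You pay for this with an extra linear-disjointness step: showing that $K\otimes_F M_F\to\mathcal M_{X,x}$ is injective. Your minimal-relation argument for this is sound; it is the same vertical-constants mechanism that underlies \Cref{lem:vertical-stable-subspaces}. In exchange you get a stronger conclusion: $E(u)\simeq K\otimes_F M_F$ on the nose. Here $M_F$ is identified as the differential $F$-module generated by $G$, which is the quotient of $M_D$ through which the paper's map factors. Given your last sentence, your final appeal to \Cref{lem:descent-subquotients-main} is actually redundant.

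One small point should be made explicit. You claim the coefficients of $\Lambda$ ``in the frame of $t$'' lie in $K$. This holds because, on first integrals, the local $\partial_t$ coincides with a rational vector field $\delta$ satisfying $\delta(t)=1$. So this frame is the rational frame of \Cref{lem:rational-transversality}\textup{(iii)} with $w=\delta$ and $L=K$.
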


\begin{proof}
If $u$ is constant, then $E(u)=K\cdot u$ is the
pullback of $F\cdot u$. We may therefore assume that
$u$ is nonconstant.

Choose a nonconstant $t\in F$ and let
$\delta\in\Der_{\C}(K)$ be a rational lift of
$\partial_t$. For every vertical vector
field $v$, the bracket $[\delta,v]$ is vertical.
Thus $\delta$ is an infinitesimal automorphism of
$\F$, generically transverse to $\F$. Let $D$ be the normalized minimal annihilator of
$\Sol_{\F}(\W)$. By
\Cref{lem:rational-transversality}\textup{(iii)},
applied with $w=\delta$ and $L=K$,
\[
   D=\delta^m+\sum_{\ell=1}^{m-1}c_\ell\delta^\ell,
\]
where the $c_\ell$ are rational first integrals.
They are annihilated by all vertical derivations, so
$c_\ell\in F$ by \Cref{lem:vertical-constants}.

Let $M_D=\bigoplus_{\ell=0}^{m-1}F e_\ell$ be the
companion differential module, with
\[
   \partial_t(e_\ell)=e_{\ell+1}
   \quad(0\leq\ell\leq m-2),
   \qquad
   \partial_t(e_{m-1})
   =-\sum_{\ell=1}^{m-1}c_\ell e_\ell.
\]
Consider the $K$-linear map
\[
   \psi:K\otimes_F M_D\longrightarrow E(u),
   \qquad
   e_\ell\longmapsto\delta^\ell(u).
\]
Every $\delta^\ell(u)$ is a first integral of $\F$,
so $\psi$ commutes with vertical derivations.
It also commutes with $\delta$, by the definition of
$M_D$ and the equation $D(u)=0$.
Since $\delta$ and the vertical vector fields span $\Der_{\C}(K)$,
the map $\psi$ is a morphism of differential modules.

Its image is a differential submodule containing $u$,
and therefore equals $E(u)$. Thus $E(u)$ is a quotient
of $K\otimes_F M_D$, and
\Cref{lem:descent-subquotients-main} gives the conclusion.
\end{proof}

\subsection{Two transverse fibrations}

\begin{lemma}
\label{lem:finite-transverse-base-change}
Let $p:X\dashrightarrow B$ be a rational fibration
with connected generic fibre from a smooth projective
variety to a curve. Set $F=\C(B)$ and $K=\C(X)$.
Let $L/F$ be a finite differential field extension,
and form $K'=KL$ in a common overfield.
If a differential module $M_F$ becomes trivial over
$L$, then $K\otimes_F M_F$ becomes trivial over $K'$.
\end{lemma}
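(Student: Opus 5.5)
The plan is to transport a horizontal basis from $L\otimes_F M_F$ to $K'\otimes_K(K\otimes_F M_F)$. First, I fix the differential structure on $K'=KL$. Inside the common overfield, $K'$ is generated by $K$ and $L$, and it is stable under the derivations of $K$. To see the latter, take $\delta\in\Der_{\C}(K)$ and an element $\ell\in L$, which is algebraic over $F$ with minimal polynomial $P$. Differentiating $P(\ell)=0$ expresses $\delta(\ell)$ as a rational expression in $\ell$ and the derivatives of the coefficients of $P$. Those coefficients lie in $F\subset K$, so $\delta(\ell)\in K(\ell)\subset K'$. Moreover, on elements of $L$ the action of $\delta$ factors through its image in $K\otimes_F\Der_{\C}(F)$. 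More precisely, if $\delta$ maps to $\sum_i g_i\otimes\partial_i$ with $g_i\in K$, then $\delta(\ell)=\sum_i g_i\,\partial_i(\ell)$, where $\partial_i$ acts on $L$ through its unique extension. This follows from uniqueness of extensions of derivations to finite algebraic extensions: both sides define derivations $L\to K'$ extending the same derivation $F\to K$. I take this compatibility as the basic step.

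Next, I would use the natural identification
\[
   K'\otimes_K(K\otimes_F M_F)\simeq K'\otimes_F M_F\simeq K'\otimes_L(L\otimes_F M_F).
\]
I then check that the connection obtained from the pullback connection on $K\otimes_F M_F$ agrees with the one induced from the $L$-module $L\otimes_F M_F$ by extending scalars along $L\subset K'$. On a pure tensor $f\otimes m$ with $f\in K'$ and $m\in M_F$, both connections give $\delta(f)\otimes m+f\,\delta(m)$. In both, the action of $\delta$ on $m$ is through the image $\sum_i g_i\otimes\partial_i$, because the $L$-module structure on $L\otimes_F M_F$ uses the extended derivations. The compatibility from the first step guarantees that the two descriptions agree on the coefficient part as well.

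Finally, I choose a horizontal basis $s_1,\dots,s_d$ of $L\otimes_F M_F$. Each $s_j$ is annihilated by all $\partial\in\Der_{\C}(F)$ extended to $L$. By the formula $\delta(s_j)=\sum_i g_i\,\partial_i(s_j)$, each $s_j$ is then annihilated by every $\delta\in\Der_{\C}(K)$. It remains to show that the $1\otimes s_j$ form a $K'$-basis of $K'\otimes_L(L\otimes_F M_F)$, which is immediate since base change preserves bases. Hence $K\otimes_F M_F$ becomes trivial over $K'$.

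I expect the main obstacle to be the first step: justifying that $K'$ is a differential field whose derivations restrict correctly on $L$, and that $\delta(\ell)=\sum_i g_i\,\partial_i(\ell)$ holds. I would handle this via uniqueness of extension of derivations along finite separable extensions, applied to the derivation $F\to K\subset K'$ given by $\delta|_F$. One also has to note that the dimension of $K'$ over $K$ need not equal $[L:F]$. This does not matter, since only a spanning horizontal family of the right cardinality is needed.
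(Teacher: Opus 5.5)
Your proposal is correct and follows the paper's route: in both, the key point is that the extension of $\delta\in\Der_{\C}(K)$ to $K'$ restricts on $L$ to the $K$-linear combination of extended derivations of $F$ determined by $\delta|_F$. The paper writes this as $c_\delta\partial_L$, since $\Der_{\C}(F)$ is one-dimensional, and it follows that a horizontal basis of $L\otimes_F M_F$ stays horizontal in $K'\otimes_F M_F$. Your justification via uniqueness of extensions of derivations along finite separable extensions spells out what the paper leaves implicit, and you are right that the linear disjointness $K\otimes_F L\simeq K'$ invoked in the paper is not needed for the conclusion.
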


\begin{proof}
The extension $K'/K$ is finite. Since $F$ is
algebraically closed in $K$, the fields $K$ and $L$
are linearly disjoint over $F$, and
\[
   K\otimes_F L\simeq K'.
\]
Choose a nonzero derivation $\partial$ of $F$ and
denote its extension to $L$ by $\partial_L$.
For $\delta\in\Der_{\C}(K)$, write
$\delta|_F=c_\delta\partial$ with $c_\delta\in K$.
The extension of $\delta$ to $K'$ restricts on $L$
to $c_\delta\partial_L$. Hence a horizontal basis of
$L\otimes_F M_F$ remains horizontal after extending
scalars to $K'$.
\end{proof}

\begin{theorem}
\label{thm:transverse-descent-main}
Let $X$ be a smooth projective variety, and let
$p:X\dashrightarrow B$ and $q:X\dashrightarrow C$
be rational fibrations with connected generic fibres
defining distinct foliations.
If a differential module $S$ over $K=\C(X)$ is
pulled back both from $F=\C(B)$ and from $G=\C(C)$,
then $S$ becomes trivial after a finite algebraic
extension of $K$.
\end{theorem}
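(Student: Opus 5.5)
The plan is to show first that the connection of $S$ is regular along both fibrations, and then to use this to force finite monodromy. Write $S\simeq K\otimes_F M_F\simeq K\otimes_G M_G$, and choose an $F$-basis of $M_F$, giving a $K$-basis $e$ of $S$. Since the two foliations are distinct, the vertical derivations $\Der_F(K)$ and $\Der_G(K)$ together span $\Der_{\C}(K)$. In the basis $e$, every $v\in\Der_F(K)$ acts by differentiating coefficients. Choose a rational lift $\delta$ of a derivation of $G$ which is tangent to the fibres of $p$; this is possible at the generic point because the two foliations are transverse. Then $\delta$ acts on the pulled-back structure through $G$, and also kills $e$. The upshot is a description of the connection matrix of $S$ in a basis adapted to both structures simultaneously.

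More concretely, I would work geometrically. Let $V$ be a flat bundle on a Zariski open $U\subset X$ representing $S$. The first step is to show that its monodromy factors through $\pi_1$ of a punctured curve $B^\circ$ via $p$, and also through $\pi_1(C^\circ)$ via $q$. This follows from descent: $S$ is pulled back from a flat bundle on $B^\circ$, and the restriction to a general fibre of $p$ is trivial. The second step is to restrict to a general fibre $\Sigma=p^{-1}(b)$. On $\Sigma$, $V$ is trivial because it is pulled back from $B$. The map $q|_\Sigma:\Sigma\dashrightarrow C$ is dominant, since the foliations are distinct. Hence the monodromy representation $\rho_G$ of $M_G$ on $\pi_1(C^\circ)$ pulls back to the trivial representation on $\pi_1(\Sigma\cap U)$. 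The image of $\pi_1(\Sigma\cap U)\to\pi_1(C^\circ)$ has finite index, because a dominant map of varieties to a curve induces a map on fundamental groups with finite-index image (after removing finitely many points). Therefore $\rho_G$ has finite image, so $M_G$ becomes trivial over a finite extension $L/G$, and \Cref{lem:finite-transverse-base-change} shows that $S=K\otimes_G M_G$ becomes trivial over the finite extension $KL$ of $K$.

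The main obstacle is regularity. Monodromy alone does not determine a differential module over a function field; an irregular connection can have trivial monodromy without being trivial. This is exactly where the second structure should be used. Over $\Sigma$, the module $K\otimes_G M_G$ restricted to $\Sigma$ (via the specialization $K\to\C(\Sigma)$, valid for general $b$) is the pullback $\C(\Sigma)\otimes_G M_G$ along the dominant map $q|_\Sigma$. On the other hand, it equals the restriction of $K\otimes_F M_F$, which is trivial as a differential module over $\C(\Sigma)$. The pullback of a module by a finite extension $G\subset\C(\Sigma)$ is trivial only if the module becomes trivial over that finite extension, by Galois descent as in \Cref{lem:restriction-rank-one-flags}. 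This argument yields triviality of $M_G$ over the finite extension $\C(\Sigma)$ algebraically, bypassing both monodromy and regularity. So my actual plan is this algebraic specialization. The delicate step is justifying that restriction to a general fibre commutes with the connection, that is, that $\C(\Sigma)\otimes_G M_G$ with the induced connection is the restriction of $S$. I would handle this by spreading out both isomorphisms over a Zariski open set of $X$ where all matrices are regular, and restricting to a fibre contained in it.
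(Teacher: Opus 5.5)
Your final plan is the paper's argument with the roles of $p$ and $q$ exchanged. The paper restricts the comparison matrix $A$ to a general fibre $Y=q^{-1}(c)$, chosen off the polar loci of $\Omega_G$, $A$ and $A^{-1}$; this is how it handles the spreading-out issue you flag. It then gets $\dd\bar A=\bar A\,(p|_Y)^*\Omega_F$.

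However, the step you use to conclude fails as soon as $\dim X\geq 3$. The fibre $\Sigma=p^{-1}(b)$ has dimension $n-1$, and $q|_\Sigma$ maps it onto a curve. So $\C(\Sigma)/G$ has transcendence degree $n-2$ and is not a finite extension. Knowing that $\C(\Sigma)\otimes_G M_G$ is trivial therefore does not yet say that $M_G$ trivializes over a finite extension. For the same reason, \Cref{lem:finite-transverse-base-change} cannot be applied with $L=\C(\Sigma)$, and $K\otimes_G\C(\Sigma)$ is not a finite extension of $K$. The appeal to Galois descent does not bridge this.

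The missing step is exactly the key step of the paper's proof; it is vacuous only when $\dim X=2$.
\begin{enumerate}[label=\textup{(\arabic*)}]
\item Write the connection of $M_G$ as $\dd+\Omega_G$ in a $G$-basis. Take a horizontal frame of $\C(\Sigma)\otimes_G M_G$ expressed in that basis, for instance the image of the restricted frame $e$, which is horizontal because $p^*\Omega_F$ restricts to zero on $\Sigma$.
\item Every derivation of $\C(\Sigma)$ tangent to the fibres of $q|_\Sigma$ kills $G$ and annihilates $(q|_\Sigma)^*\Omega_G$. Hence it kills the entries of this frame, so their relative differentials over $G$ vanish.
\item In characteristic zero, the entries are therefore algebraic over $G$. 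These finitely many entries generate a finite extension $G'\subset\C(\Sigma)$.
\item The map $\C(\Sigma)\otimes_{G'}\Omega^1_{G'/\C}\to\Omega^1_{\C(\Sigma)/\C}$ is injective, so the frame is already horizontal over $G'$. Thus $M_G$ becomes trivial over the finite extension $G'$.
\item Only now does \Cref{lem:finite-transverse-base-change} apply, giving triviality of $S$ over $KG'$.
\end{enumerate}

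With this step added your argument is complete, and the monodromy discussion in your first two paragraphs can be dropped.
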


\begin{proof}
Choose differential modules $S_F$ and $S_G$ and
an isomorphism
\[
   \Phi:K\otimes_F S_F
   \longrightarrow K\otimes_G S_G
\]
of differential $K$-modules.
Fix bases of $S_F$ and $S_G$, write their connections as
$\nabla_F=\dd+\Omega_F$ and $\nabla_G=\dd+\Omega_G$, and let
$A\in\operatorname{GL}_r(K)$ be the matrix of $\Phi$.
Here $\Omega_F$ and $\Omega_G$ are matrices of rational one-forms on
$B$ and $C$, respectively. Compatibility with the connections gives
\[
   \dd A=A\,p^*\Omega_F-q^*\Omega_G\,A.
\]

After resolving the indeterminacies of $p$ and $q$, choose a general
smooth irreducible closed fibre $Y=q^{-1}(c)$ such that $p|_Y$ is
nonconstant, $c$ lies outside the poles of $\Omega_G$, and $Y$ is not
contained in the polar divisors of the entries of $A$ or $A^{-1}$.
Such a choice is possible because $p$ and $q$ define distinct foliations.
Set $H=\C(Y)$ and regard $F$ as a subfield of $H$ through $p|_Y$.
Restricting the preceding identity to $Y$ gives
\[
   \dd\bar A=\bar A\,(p|_Y)^*\Omega_F,
   \qquad \bar A\in\operatorname{GL}_r(H).
\]
Passing to relative differentials over $F$, we obtain
$\dd_{H/F}\bar A=0$. In characteristic zero, every entry of
$\bar A$ is therefore algebraic over $F$. Thus
\[
   F'=F(\text{entries of }\bar A)\subset H
\]
is a finite extension of $F$, and $\bar A\in\operatorname{GL}_r(F')$.

The natural map
$H\otimes_{F'}\Omega^1_{F'/\C}\to\Omega^1_{H/\C}$ is injective,
so the restricted connection identity already holds over $F'$.
Consequently,
\[
   \dd(\bar A^{-1})=-\Omega_{F'}\,\bar A^{-1},
\]
where $\Omega_{F'}$ is the pullback of $\Omega_F$ to $F'$.
The columns of $\bar A^{-1}$ form a horizontal basis of
$F'\otimes_F S_F$. Embedding $F'$ over $F$ in an algebraic closure
of $K$, \Cref{lem:finite-transverse-base-change} shows that $S$
becomes trivial over the finite extension $KF'/K$.
\end{proof} 

\begin{corollary}
\label{cor:transverse-resonance-main}
A differential module which is simultaneously a
subquotient of modules pulled back through two
rational fibrations defining distinct foliations
becomes trivial after a finite algebraic extension
of $K$.
\end{corollary}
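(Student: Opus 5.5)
The corollary reduces to \Cref{thm:transverse-descent-main} once we know that subquotients of pullbacks are again pullbacks. Let $p:X\dashrightarrow B$ and $q:X\dashrightarrow C$ be the two rational fibrations. As in the preceding subsection, we resolve indeterminacies and take Stein factorizations, so both are morphisms with connected generic fibres; this does not change the foliations they define. Set $F=\C(B)$ and $G=\C(C)$.

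Let $S$ be a differential $K$-module that is a subquotient of $K\otimes_F M_F$ for some differential $F$-module $M_F$. Write $S=N/N'$ with $N'\subset N\subset K\otimes_F M_F$ differential submodules. By \Cref{lem:descent-subquotients-main} there are differential $F$-submodules $N'_F\subset N_F\subset M_F$ with $N=K\otimes_F N_F$ and $N'=K\otimes_F N'_F$. We should check that $N'_F\subset N_F$: since $K\otimes_F N'_F\subset K\otimes_F N_F$, intersecting with $M_F$ gives it, using the faithful flatness of $K$ over $F$. Then exactness of $K\otimes_F-$ gives
\[
   S\simeq K\otimes_F(N_F/N'_F).
\]
So $S$ is pulled back from $F$. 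The same argument, using a differential $G$-module $M_G$ with $S$ a subquotient of $K\otimes_G M_G$, shows that $S$ is pulled back from $G$.

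Since $p$ and $q$ define distinct foliations, \Cref{thm:transverse-descent-main} applies to $S$ and shows that $S$ becomes trivial after a finite algebraic extension of $K$.

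The one point needing care is the meaning of ``subquotient''. If $S$ is only isomorphic to a subquotient of a pullback, the isomorphism transports the pullback structure, so nothing changes. If instead the intended statement refers to a simple factor occurring in both pulled-back modules, as arises from resonance, then that factor is by definition a subquotient of each, and the argument above applies verbatim. Everything else is a formal consequence of \Cref{lem:descent-subquotients-main} and \Cref{thm:transverse-descent-main}.
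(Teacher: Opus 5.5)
Your proposal is correct and follows the paper's own proof, which consists of applying \Cref{lem:descent-subquotients-main} to both fibrations and then \Cref{thm:transverse-descent-main}. You have written out details the paper leaves implicit: the inclusion $N'_F\subset N_F$, exactness of $K\otimes_F-$, the reduction to connected generic fibres, and transport of the pullback structure along isomorphisms.
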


\begin{proof}
Apply \Cref{lem:descent-subquotients-main} to both
fibrations, followed by \Cref{thm:transverse-descent-main}.
\end{proof}
\section{Proof of the main theorem}
\label{sec:proof-main-theorem}

We now prove \Cref{thm:main}, restated below.

\begin{theorem}
\label{thm:liouvillian-abelian-relations}
Let $\W$ be a $k$-web on a smooth projective variety $X$.
Every germ of an abelian relation of $\W$ at a general
regular point is defined over a Liouvillian extension
of $\C(X)$, in the sense of
\Cref{def:liouvillian-abelian-relation}.
\end{theorem}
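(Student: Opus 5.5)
First reduce to a decomposed web by \Cref{prop:decomposable-pullback} and \Cref{rmk:liouvillian-descent-covers}, so that $\W=\F_1\boxt\cdots\boxt\F_k$ on $X$. By \eqref{eq:functional-abelian-exact} and \Cref{rmk:liouvillian-function-form}, it suffices to prove that every component $u_i$ of a functional abelian relation $u_1+\cdots+u_k=0$ is Liouvillian. By \Cref{rmk:constants-automatic}, the compositum of finitely many Liouvillian extensions is Liouvillian, so each $u_i$ may be treated separately. Split the indices into two sets: $I$, the $\F_i$ without a nonconstant rational first integral, and $J$, the $\F_j$ given by rational fibrations $p_j:X\dashrightarrow B_j$. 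For $i\in I$, \Cref{thm:no-rfi} applies directly, with $\F=\F_i$ and web $\boxtimes_{l\neq i}\F_l$, and shows that $u_i$ is Liouvillian. It remains to handle $u_j$ with $j\in J$.

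For $j\in J$, write $E_j=E(u_j)$. By \Cref{prop:algebraic-component-descent-main}, $E_j=K\otimes_{F_j}M_j$ with $F_j=\C(B_j)$. Take a composition series of $M_j$ over $F_j$; by \Cref{lem:descent-subquotients-main}, its pullback is a composition series of $E_j$ whose factors are pulled back from $B_j$. I would show that, after a finite algebraic extension $K'/K$, every simple factor of every $E_j$ becomes either trivial or rank one. By \Cref{lem:component-resonance-main}, each simple factor $S$ of $E_j$ is isomorphic to a simple factor of some $E_l$ with $l\neq j$. There are two cases.
\begin{enumerate}[label=\textup{(\alph*)}]
\item If $l\in J$, then $\F_l\neq\F_j$, and \Cref{cor:transverse-resonance-main} makes $S$ trivial over a finite extension.
\item If $l\in I$, then by \Cref{prop:no-rfi-module-factors} the module $K'\otimes E_l$ has rank-one factors. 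Hence $K'\otimes S$ also has rank-one factors, by Jordan--H\"older (after checking that extending scalars to $K'$ commutes suitably with subquotients).
\end{enumerate}
Taking a single finite Galois $K'$ that works for all factors, each $K'\otimes_K E_j$ admits a flag $0=V_0\subset\cdots\subset V_r$ with rank-one quotients.

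Finally, solve triangularly. Choose a basis adapted to the flag, $e_1,\ldots,e_r$ with $e_s\in V_s$. Write $u_j=\sum_s f_s e_s$ and proceed by induction along the flag. A rank-one horizontal quotient $V_s/V_{s-1}$ is spanned by the class of $e_s$, with $\dd e_s\equiv \theta_s e_s$ modulo $V_{s-1}$ and $\theta_s$ a closed rational one-form over $K'$. Hence $E_s=\exp\int\theta_s$, an exponential of a primitive, is such that $e_s/E_s$ has differential in $V_{s-1}\otimes\Omega^1$. By induction, the elements of $V_{s-1}$ are Liouvillian, so $e_s$ is obtained from $K'$ by a primitive followed by an exponential. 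The germs $e_s$ lie in $\mathcal M_{X,x}$ since $E_j\subset \mathcal M_{X,x}$. Thus every element of $K'\otimes E_j$ is Liouvillian over $K'$; in particular $u_j$ is, and \Cref{lem:liouvillian-correspondence}(i) returns the conclusion to $K$.

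The main obstacle I expect is making case (b) rigorous. The resonance partner $E_l$ lives over $K$, while the flag with rank-one factors exists only over $K'$, so simple factors over $K$ may split over $K'$. I would handle this by always working with $K'\otimes_K$ of composition series and using exactness of base change, so that the simple factors over $K$ of $E_j$ appear, after base change, as modules with rank-one flags. A second point to check is that the triangular integration really produces germs inside $\mathcal M_{X,x}$ equal to the given $u_j$, not merely formal solutions. This holds because all the $e_s$ are actual elements of $E_j\otimes K'$.
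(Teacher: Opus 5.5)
Your argument is correct and is essentially the paper's route: resonance (\Cref{lem:component-resonance-main}), triviality of factors shared by two fibrations (\Cref{cor:transverse-resonance-main}), rank-one factors from \Cref{prop:no-rfi-module-factors}, then triangular integration. The only organizational difference is that you handle components along foliations without rational first integrals directly by \Cref{thm:no-rfi}, whereas the paper feeds them into the same rank-one reduction. One point needs tightening: $K'\otimes_K E_j$ need not embed in $\mathcal M_{X,x}$, so, as the paper does, push your flag forward along the horizontal multiplication map $K'\otimes_K E_j\to K'E_j\subset\mathcal M_{X,x}$. The images of your adapted basis then satisfy the same triangular system, which is all your induction needs.
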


\begin{lemma}
\label{lem:rank-one-reduction}
Let $\W=\F_1\boxt\cdots\boxt\F_k$ be a decomposable
$k$-web on a smooth projective variety $X$, and let
\[
   u_1+\cdots+u_k=0
\]
be a functional abelian relation. Set $K=\C(X)$ and
$E_i=E(u_i)$. There is a finite algebraic extension
$K'/K$ such that every $K'\otimes_K E_i$ has a
composition series with rank-one simple factors.
\end{lemma}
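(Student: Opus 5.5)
We split the indices $i$ according to whether $\F_i$ has a nonconstant rational first integral. For each $i$ such that $\F_i$ has none, \Cref{prop:no-rfi-module-factors} provides a finite algebraic extension $K_i'/K$ over which $E_i$ acquires a flag with rank-one simple factors. The remaining indices are those for which $\F_i$ is defined by a rational fibration $p_i:X\dashrightarrow B_i$, which we normalize to have connected generic fibre. For these, \Cref{prop:algebraic-component-descent-main} shows that $E_i$ is pulled back from a differential module over $F_i=\C(B_i)$; this applies because $u_i$ is the $\F_i$-component of a functional abelian relation of $\W'\boxt\F_i$, where $\W'$ is the web of the other factors. Since the factors are pairwise transverse, two such fibrations $p_i,p_j$ with $i\neq j$ define distinct foliations.

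The key step is to show that every simple factor $S$ of $E_i$, with $\F_i$ a fibration, becomes rank-one-triangularizable (indeed trivial, or rank one) over a finite extension. We run a resonance-chasing argument. By \Cref{lem:component-resonance-main}, $S$ is isomorphic to a simple factor of some $E_j$ with $j\neq i$. If $\F_j$ is also a fibration, then $S$ is a subquotient of modules pulled back through two distinct fibrations, so \Cref{cor:transverse-resonance-main} makes $S$ trivial over a finite extension, and in particular gives it a flag with rank-one factors. If $\F_j$ has no rational first integral, then after the extension $K_j'$ we know that $K_j'\otimes S$ is a subquotient of a module with rank-one factors. Since its simple factors appear among those, Jordan--H\"older shows that $K_j'\otimes S$ has a flag with rank-one factors. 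One caveat is that simplicity over $K$ need not survive base change, but the statement only needs rank-one factors after extension, and subquotients inherit this property from modules that have it. So in either case $S$ acquires rank-one factors over a finite extension.

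Next we take $K'$ to be a common finite Galois extension, namely the compositum of all the finitely many extensions produced, one for each $i$ and each simple factor. The properties persist under further finite extension: a flag with rank-one quotients base-changes to such a flag, and triviality is preserved. Finally, for each fibration index $i$, we fix a composition series of $E_i$ over $K$. After tensoring with $K'$ we get a filtration whose graded pieces are the $K'\otimes S$, each carrying a flag with rank-one factors. Refining these flags yields the required composition series of $K'\otimes_K E_i$, since extension of scalars is exact.

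The main obstacle is the compatibility between base change and simplicity in the resonance step. Resonance is a statement over $K$, while the rank-one information for non-fibration factors only exists over $K_j'$. The remedy is to work throughout with the property of \emph{having rank-one composition factors after a finite extension}. This property is closed under subquotients and extensions by the Jordan--H\"older theorem, and it persists under further finite extension, so the argument goes through without ever needing a simple factor to remain simple. If the Galois descent of flags turns out to be delicate, \Cref{lem:restriction-rank-one-flags} handles it.
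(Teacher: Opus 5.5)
Your proposal is correct and follows essentially the same route as the paper. You pair each simple factor of $E_i$ with a simple factor of some $E_j$, $j\neq i$, via resonance. When one of the two foliations has no rational first integral, you use \Cref{prop:no-rfi-module-factors} with Jordan--H\"older, and when both are fibrations, you use \Cref{cor:transverse-resonance-main}. You then take a compositum and refine the base-changed composition series. The only difference is cosmetic: you handle indices whose foliation lacks a rational first integral directly, without going through resonance, which the paper's case split covers equally.
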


\begin{proof}
Let $S$ be a simple factor of $E_i$. By
\Cref{lem:component-resonance-main}, it is also
isomorphic to a simple factor of some $E_j$ with
$j\neq i$.

If one of $\F_i,\F_j$ has no nonconstant rational
first integral, \Cref{prop:no-rfi-module-factors}
gives a finite extension over which the corresponding
component module has rank-one simple factors.
The same holds for its subquotient $S$.

If both foliations admit rational first integrals,
\Cref{prop:algebraic-component-descent-main} shows
that $E_i$ and $E_j$ are pulled back through rational
fibrations defining the distinct foliations
$\F_i$ and $\F_j$. Hence $S$ becomes trivial after
a finite algebraic extension, by
\Cref{cor:transverse-resonance-main}.

There are only finitely many simple factors among
the $E_i$. Take the compositum $K'$ of the finite
extensions obtained for these factors.
Scalar extension is exact and preserves rank-one
modules, so extending and refining a composition
series of each $E_i$ gives the required series.
\end{proof}

\begin{proof}[Proof of \Cref{thm:liouvillian-abelian-relations}]
By
\Cref{rmk:liouvillian-function-form,rmk:liouvillian-descent-covers},
it suffices to consider a functional abelian relation
\[
   u_1+\cdots+u_k=0
\]
of a decomposable web
$\W=\F_1\boxt\cdots\boxt\F_k$.
Set $K=\C(X)$ and $E_i=E(u_i)$, and choose the finite
extension $K'/K$ given by \Cref{lem:rank-one-reduction}
inside $\mathcal M_{X,x}$.

Let $K'E_i\subset\mathcal M_{X,x}$ be the $K'$-linear
span of $E_i$. The multiplication map
\[
   K'\otimes_K E_i\longrightarrow K'E_i,
   \qquad a\otimes f\longmapsto af,
\]
is a surjective morphism of differential $K'$-modules.
Thus $K'E_i$ also has a composition series with
rank-one simple factors.

Fix $i$ and choose a basis $e_1,\ldots,e_r$ of $K'E_i$
adapted to such a series. Then
\begin{equation}
\label{eq:triangular-component-system}
   \dd e_j=e_j\omega_j+
   \sum_{\ell<j}e_\ell\omega_{j\ell},
\end{equation}
where $\omega_j,\omega_{j\ell}$ are one-forms over $K'$.
Flatness gives $\dd\omega_j=0$. 

We first consider the exponential adjunctions
\[
    z_j=\exp\left(\int\omega_j\right), 1\le j \le r,
\]
over $K'$.
Equation \eqref{eq:triangular-component-system} gives
\[
   \dd\left(\frac{e_j}{z_j}\right)
   =\sum_{\ell<j}\frac{e_\ell}{z_j}\omega_{j\ell}.
\]
For $j=1$, the right-hand side is zero, so $e_1$
is a constant multiple of $z_1$. Inductively, once
$e_1,\ldots,e_{j-1}$ have been adjoined, adjoining
$e_j/z_j$ is a primitive extension. Hence all the
$e_j$, and therefore $u_i$, belong to a Liouvillian
extension of $K'$.

By \Cref{rmk:constants-automatic}, taking the compositum of these extensions for all
$i$ gives a Liouvillian extension containing every
component of an abelian relation of the web $\W$. Finally, \Cref{lem:liouvillian-correspondence}\textup{(i)} implies that this compositum is also Liouvillian over $K$.
\end{proof}

\bibliographystyle{plain}
\bibliography{references}

\end{document}